\documentclass[11pt]{amsart}
\usepackage{indentfirst,latexsym,bm,color}
\usepackage{amsmath,amssymb}
\usepackage{amsthm}
\usepackage{fancyhdr}
\usepackage{graphics}
\usepackage{indentfirst,latexsym,bm,amsthm,amssymb,graphicx}
\usepackage{colortbl}
\usepackage{times}
\usepackage{amsfonts}

\newtheorem{theorem}{Theorem}[section]
\newtheorem{lemm}[theorem]{Lemma}

\newtheorem{theo}[theorem]{Theorem}
\theoremstyle{definition}

\newtheorem{coro}[theorem]{Corollary}
\theoremstyle{remark}
\newtheorem{remark}[theorem]{Remark}

\renewcommand{\thefootnote}

\begin{document}

\title[Poincar\'{e} series and generalized McKay-Slodowy quiver]
{Poincar\'{e} series of relative symmetric invariants 
for $\mathrm{SL}_n(\mathbb{C})$}

\author[Jing]{Naihuan Jing}
\address{
Department of Mathematics, North Carolina State University,
   Raleigh, NC 27695, USA \\}
\email{jing@math.ncsu.edu}

\author[Wang]{Danxia Wang}
\address{Department of Mathematics, Shanghai University,
Shanghai 200444, China}
\email{dxwangmath@126.com}

\author[Zhang]{Honglian Zhang}
\address{Department of Mathematics, Shanghai University,
Shanghai 200444, China} \email{hlzhangmath@shu.edu.cn}

\subjclass[2010]{14E16, 17B67, 20C05}
\keywords{Symmetric algebra, Poincar\'{e} series, McKay-Slodowy correspondence, invariants,
quantum Cartan matrix}
\begin{abstract}
Let $(N, G)$, where $N\unlhd G\leq \mathrm{SL}_n(\mathbb{C})$, be a pair of finite groups and
$V$ a finite-dimensional fundamental $G$-module. We study the $G$-invariants in the symmetric algebra $S(V)=\oplus_{k \geq 0}S^k(V)$ by giving explicit  formulas of the Poincar\'{e} series
for the induced modules and restriction modules. In particular, this provides a uniform formula of the Poincar\'{e} series for the symmetric invariants in terms of the McKay-Slodowy correspondence. Moreover, we also derive a global version of the Poincar\'e series
in terms of Tchebychev polynomials in the sense that one needs only the dimensions of the
subgroups and their group-types to completely determine the Poincar\'e series.
\end{abstract}
\date{}
\maketitle

\footnote{Corresponding author: dxwangmath@126.com}
\section{Introduction}

Let $V=\mathbb C^n$ be the fundamental module of the special linear group $\mathrm{SL}_n(\mathbb C)$. The $k$th symmetric tensor space
$S^k(V)$ is a simple $\mathrm{SL}_n(\mathbb C)$-module denoted by $\Gamma_{(k)}$, and the symmetric tensor algebra $S(V)=\bigoplus_{k=0}^{\infty}S^k(V)$
is an infinite dimensional $\mathrm{SL}_n(\mathbb C)$-module. It is well-known that the set
$\{\Gamma_{(k)}\}$ generates all irreducible finite dimensional
representations of $\mathrm{SL}_n$ under the restriction functor according to the Schur-Weyl duality.

Let $G$ be a finite subgroup of $\mathrm{SL}_2(\mathbb C)$. Kostant \cite{Kos, Kos2, Kos3} studied the interesting question on how the restriction
$\Gamma_{(k)}|_{G}$ decomposes itself into simple $G$-modules and found that the answer relies upon resolution of singularity of certain algebraic surfaces through the McKay correspondence.

The McKay correspondence \cite{Mc} gives a bijective map between finite subgroups of $\mathrm{SL}_2(\mathbb{C})$ and affine Dynkin diagrams of untwisted $ADE$ types.
It is known that this correspondence establishes a classification of resolution of singularities of $\mathbb C^2/G$, where
$G$ is a finite subgroup of $\mathrm{SL}_2(\mathbb C)$.
Slodowy \cite{Sl} considered more general minimal resolution of the singularity of $\mathbb C^2/ {N}$ under the action of
$G/N$, where $G\leq \mathrm{SL}_2(\mathbb C)$. The algebraic counterpart is the so-called  McKay-Slodowy correspondence which matches all affine Dynkin diagrams
with the pairs $N\lhd G\leq \mathrm{SL}_2(\mathbb C)$. For an elementary proof of the McKay-Slodowy correspondence, see \cite{JWZ}. 

The goal of this paper is to generalize Kostant's results in two directions.  In the first direction, we consider the general special linear group
$\mathrm{SL}_n(\mathbb C)$. In another aspect, we replace the defining fundamental module $V=\mathbb C^n$ by any fundamental irreducible module of the Lie group $\mathrm{SL}_n(\mathbb C)$.
The main results of the paper will show that even in the most general situation of $\mathrm{SL}_n(\mathbb C)$, one can still obtain
similar formulas for the Poincar\'e series of $G$-invariants in the symmetric tensor algebra, and in the special case of the twisted affine Lie algebras, some of the intrinsic data are also encoded in the
Poincar\'e series of the $(G, N)$-invariants in the symmetric tensor algebra (compare \cite{JWZ}).


Let us now describe the main results in the paper. 
Let $\mathfrak{h}$ be a Cartan subalgebra of the special linear Lie algebra $\mathfrak{sl}_n(\mathbb{C})$,
and let $P$ be the weight lattice spanned by the vectors $\varepsilon_1,\varepsilon_2,\ldots,\varepsilon_{n-1}$, where $(\varepsilon_i, \varepsilon_j)=\delta_{ij}$.
Denote by $\Gamma_{(u_1,u_2,\ldots, u_{n-1})}$ the finite dimensional irreducible representation of the special linear group $\mathrm{SL}_n(\mathbb{C})$
associated to the dominant weight $\lambda=(u_1+ \ldots + u_{n-1})\varepsilon_1+(u_2+ \ldots + u_{n-1})\varepsilon_2+ \ldots + u_{n-1}\varepsilon_{n-1}$,
where
$(u_1,u_2, \ldots ,u_{n-1})\in \mathbb Z_+^{n-1}$. Therefore $\Gamma_{(1, 0, \ldots, 0)}$ is the standard module $V$, and
the $r$th fundamental module $\Gamma_{(0, \ldots, 1, \ldots, 0)}=\wedge^r V$.

Let $N$ be a normal subgroup of a finite group $G\leq \mathrm{SL}_n(\mathbb{C})$ and
$\{\rho_i|i\in {\rm I}_{G}\}$ (resp. $\{\phi_i|i\in {\rm I}_{N}\}$) the set of
complex finite-dimensional irreducible modules of $G$ (resp. $N$).
Let $\{\check\rho_i|i\in \check{\mathrm{I}}\}$ be the set of inequivalent $N$-restriction modules ${\rm Res}(\rho_i):=\check\rho_i$.
Correspondingly, the set $\{\hat\phi_i|i\in \rm\hat{{I}}\}$ denotes that of inequivalent induced $G$-modules
${\rm Ind}(\phi_i)=:\hat\phi_i$.

Let $V$ be the natural finite-dimensional $G$-module, denote the $N$-restriction
of $V$ by $\check{V}$.
The following tensor products decompose into irreducible components: 
\begin{equation*}\label{1.2}
  \wedge^{r} \check{V}\otimes \check\rho_j=\bigoplus\limits_{i\in\check{\rm I}}b^{(r)}_{ji} \check\rho_{i} \qquad {\rm and}\qquad
  \wedge^{r} V\otimes \hat\phi_j=\bigoplus\limits_{i\in\hat{\rm I}}d^{(r)}_{ji} \hat\phi_i,
\end{equation*}
where the integral matrices $B_r=(b^{(r)}_{ij})$ and $D_r=(d^{(r)}_{ij})$ are of the same size for $1\leq r\leq n-1$ respectively.
For $r=1$,  we can construct the representation graph $\mathcal{R}_{V}(\check{G})$
(resp. $\mathcal{R}_{V}({\hat{N}})$) by taking the elements of ${\rm \check{I}}$ (resp. ${\rm \hat{I}}$) as vertices,
connecting $i$ and $j$ with $\mathrm{max}(b^{(1)}_{ij}, b^{(1)}_{ji})$ (resp. $\mathrm{max}(d^{(1)}_{ij}, d^{(1)}_{ji})$) edges,
and adding an arrow pointing to $i$ if $b^{(1)}_{ij}>1$ (resp. $d^{(1)}_{ij}>1$). The digraph $\mathcal{R}_{V}(\check{G})$ (resp. $\mathcal{R}_{V}({\hat{N}})$) is called
a \textit{generalized McKay-Slodowy quiver}.

If $N=G\leq \mathrm{SL}_2(\mathbb{C})$,
the tensor product between $G$-module $\wedge^{r} V$ and an
irreducible $G$-module $\rho_j$ ($j\in {\rm I}_{G}$) come down to
\begin{equation*}\label{1.4}
\wedge^{r} V \bigotimes \rho_j = \bigoplus_{i\in {\rm I}_{G}} a^{(r)}_{ji} \rho_i,
\end{equation*}
where the matrices $A_r=(a^{(r)}_{ij})$ for $1\leq r\leq n-1$. 
Thus, the generalized McKay quiver $\mathcal{R}_{V}(G)$
defined in \cite{HJC} is just $A_r$ for $r=1$, whose index set is ${\rm I}_{G}$, and there are
$a^{(1)}_{ij}$ directed edges from $i$ to $j$, and an undirected edge between $i$ and $j$ represents the pair of arrows between $i$ and $j$.

Let $N\unlhd G$ be a certain pair of subgroups of $\mathrm{SL}_2(\mathbb{C})$,
the McKay-Slodowy quiver $\mathcal{R}_{V}(\check{G})$ (resp. $\mathcal{R}_{V}({\hat{N}})$) is
the twisted (resp. the non-twisted) multiply laced affine Dynkin diagram,
a detailed description of the McKay-Slodowy correspondence is available in \cite{JWZ}.
When $N=G\leq\mathrm{SL}_2(\mathbb{C})$, the McKay-Slodowy correspondence descends to the McKay correspondence, where
the McKay quiver
$\mathcal{R}_{V}(G)$ is the simple-laced affine Dynkin diagram. Furthermore,
a generalized McKay quiver $\mathcal{R}_{V}(G)$ had been described in \cite{HJC}
for $G$ is a finite subgroup of $\mathrm{SL}_3(\mathbb{C})$.
An impetus to seek a generalized McKay-Slodowy correspondence is that
the general formulas of Poincar\'{e} series for the relative invariants
might be helpful for general minimal resolutions of the singularity in higher dimension.

After giving explicit formulas for the Poincar\' series for relative symmetric invariants for the $\mathrm{SL}_n(\mathbb C)$
in terms of the intrinsic data of the subgroups, we also consider the relation between the Poincar\'e series of relative symmetric invariants and
the finite and affine Coxeter transformation. Tensor invariants
and Poincar\'e series were used by Benkart \cite{Ben} to realize the exponents of simply laced affine Lie algebras
and we have generalized the realization of exponents to all twisted affine Lie algebras
except $A_{2n}^{(1)}$ in
\cite{JWZ}. Similarly in this paper, we will show that the Poincar\'e series of symmetric invariants also have very close relation with
finite and affine Coxeter transformations, and we also generalize some of these formulae of symmetric invariants to all twisted affine and
non-simply laced untwisted affine Lie algebras. Not surprisingly, we are also able to generalize Kostant's compact formulas of the
Poincar\'e series to all untwisted and twisted affine Dynkin diagrams using intrinsic group data in the context of the McKay-Slodowy correspondence.

We also derive the Poincar\'e series of symmetric invariants exclusively in terms of Tchebychev polynomials. This implies a surprising beautiful
fact about the Poincar\'e series of invariants that they are completely determined by the types of the distinguished pairs of subgroups and
the respective dimensions of the subgroups. Our new formula points out the global picture of the Poincar\'e series of invariants, for example, one does
not need to know the information of exponents or eigenvalues of the Coxeter transformation to determine the Poincar\'e series.
So far this information has only been implicitly available for general symmetric invariants in the literature.

The paper is organized as follows. In section $2$, we give the
formulas of the Poincar\'{e} series of the $N$-restriction modules and induced $G$-modules
in symmetric algebra $S(\mathbb{C}^n)=\bigoplus\limits_{k \geq 0}S^{k}(\mathbb{C}^n)$ respectively.
In particular, if $N=G\leq\mathrm{SL}_n(\mathbb{C})$, we get
a formula of the Poincar\'{e} series for irreducible $G$-modules in symmetric algebra $S(\mathbb{C}^n)$.
In Section $3$, the Poincar\'{e} series of invariants for pairs of finite subgroups (resp. the finite subgroups) of $\mathrm{SL}_2(\mathbb{C})$
are obtained in terms of the quantum affine (resp. finite) Cartan matrices of the affine (resp. finite) Lie algebras.
Moreover, the Poincar\'{e} polynomial of symmetric invariants exhibit the exponents and Coxeter number of the affine (resp. finite) Lie algebras.
In addition, we generalize a classical result of Poincar\'{e} series for symmetric $G$-invariants to Poincar\'{e} series
for $G$-restriction invariants or $N$-induction invariants. In other words, we have provided an unified formula of Poincar\'{e} series for symmetric invariants for affine Lie algebras in both untwisted and twisted types.
In Section $4$, we get the closed-form expressions of the Poincar\'e series
of symmetric invariants for pairs of subgroups which
realize all twisted and untwisted affine Lie algebras. Moreover, the new formulas are of global nature in that they are completely determined by
the pairs of subgroups.

\section{Poincar\'{e} series associated with $\mathrm{SL}_n(\mathbb{C})$}

Let $N, G$ be a pair of finite subgroups of $\mathrm{SL}_n(\mathbb{C})$ such that $N \unlhd G$.
Let $V=\mathbb C^n$ be the standard $\mathrm{SL}_n$-module, also referred as
the natural $G$-module. Assume that $\{\rho_i\}$ and $\{\phi_i\}$ are the sets of complex irreducible modules for
$N$ and $G$ respectively. As we have remarked that the span of  $N$-restriction modules
$\{\check{\rho}_i\}$ and the span of $G$-induction modules $\hat{\phi}_i$ have the same dimension, so $|{\rm \check{I}}|=|{\rm \hat{I}}|$.

Let $\check{s}^{j}_{k}$ $(j\in {\rm \check{I}})$ (resp. $\hat{s}^{j}_{k}$ ($j\in {\rm \hat{I}}$)) be the multiplicity
of the $N$-restriction $\check{\rho}_j$ in the $k$th symmetric power $S^{k}(V)$ (resp. induced $G$-module $\hat{\phi}_j$ in $S^{k}(V)$),
namely
\begin{align*}
\check{s}^{j}_{k}={{\dim}}\left({Hom}_{N}(\check{\rho}_j, S^{k}(V))\right) \
{\rm and} \ \ \hat{s}^{j}_{k}={{\dim}}\left({Hom}_{G}(\hat{\phi}_j, S^{k}(V))\right).
\end{align*}
Let
\begin{equation*}\label{1.3}
  \check{s}^{j}(t)=\sum\limits_{k\geq 0}\check{s}^{j}_{k}t^{k}\qquad {\rm and}\qquad
  \hat{s}^{j}(t)=\sum\limits_{k\geq 0}\hat{s}^{j}_{k}t^{k}
\end{equation*}
be the Poincar\'{e} series for the multiplicities of $\check\rho_j$ and $\hat\phi_j$ in the symmetric algebra
$S(V)=\bigoplus\limits_{k \geq 0}S^{k}(V)$ respectively.

When $N=G$, we only consider $G$-modules, and let $s^{j}_{k}$ be the multiplicity of an irreducible
$G$-module $\rho_j$ in the $k$th symmetric power $S^{k}(V)$ for each $j\in {\rm I_{G}}$. Accordingly the Poincar\'e series is then
\begin{equation*}\label{mul1}
  s^{j}(t)=\sum\limits_{k\geq 0}s^{j}_{k}t^{k}=\sum\limits_{k\geq 0}{{\dim}}\left({Hom}_{G}(\rho_j, S^{k}(V))\right)t^k.
\end{equation*}

In this section, we will give the general formulas of Poincar\'{e} series for arbitrary pair
$N\unlhd G\leq \mathrm{SL}_n(\mathbb{C})$ in the symmetric algebra $S(\mathbb{C}^n)=\bigoplus\limits_{k \geq 0}S^{k}(\mathbb{C}^n)$.
Moreover, $N=G\leq \mathrm{SL}_n(\mathbb{C})$ leads to a general formula of Poincar\'{e}
series of $G$ in $S(\mathbb{C}^n)=\bigoplus\limits_{k \geq 0}S^{k}(\mathbb{C}^n)$.

\subsection{The general formulas of Poincar\'{e} series for $\mathrm{SL}_n(\mathbb{C})$}

Let $G$ be a finite subgroup of $\mathrm{SL}_n(\mathbb{C})$ and $V=\mathbb C^n$ the standard $\mathrm{SL}_n$-module, which is
also viewed as the
natural $G$-module as $G$-restriction.
We will give explicit expression of $S^k(V)$  and establish its relation with
character values and adjacency matrices.
We first recall some basic results on $\mathrm{SL}_n$-modules.

\begin{lemm} Pieri Rule {\rm\cite[Prop. 15.25]{FuHa}}\label{sym&ext}
Let $\Gamma_{(u_1,\ldots,u_{n-1})}$ be an irreducible representation of $\mathrm{SL}_n(\mathbb{C})$.
Then,
the tensor product of $\Gamma_{(u_1,\ldots,u_{n-1})}$ with $S^{k}(V)=\Gamma_{(k,0,\ldots,0)}$
decomposes into a direct sum :
\begin{equation*}
    \Gamma_{(u_1,\ldots,u_{n-1})}\bigotimes \Gamma_{(k,0,\ldots,0)} = \bigoplus\Gamma_{(b_1,\ldots,b_{n-1})}
\end{equation*}
where the sum is over all $(b_1, \ldots,  b_{n-1})$ for which there are nonnegative integers $c_1, \ldots,  c_{n}$
whose sum is $k$, such that $c_{i+1}\leq u_i$ for $1\leq i \leq n-1$ and $b_i=u_i + c_i -c_{i+1}$ for $1\leq i \leq n-1$.
\end{lemm}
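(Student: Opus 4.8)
The plan is to transfer the statement to the classical Pieri identity for Schur polynomials and then prove that identity combinatorially. First I would replace $\mathrm{SL}_n(\mathbb{C})$ by $\mathrm{GL}_n(\mathbb{C})$: both groups are linearly reductive, so a tensor product of finite-dimensional modules is determined up to isomorphism by its character, and $\Gamma_{(u_1,\ldots,u_{n-1})}$ is the restriction to $\mathrm{SL}_n$ of the irreducible polynomial $\mathrm{GL}_n$-module $V_\lambda$ with highest weight the partition $\lambda=(\lambda_1,\ldots,\lambda_n)$, where $\lambda_i=u_i+u_{i+1}+\cdots+u_{n-1}$ and $\lambda_n=0$. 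Under the change of coordinates $u_i=\lambda_i-\lambda_{i+1}$, a summand labelled by $b_i=u_i+c_i-c_{i+1}$ with $c_i\ge 0$, $\sum_{i=1}^n c_i=k$ and $c_{i+1}\le u_i$ corresponds to the new partition $\nu$ with $\nu_i=\lambda_i+c_i$; the constraints translate exactly into the horizontal strip condition $\nu_1\ge\lambda_1\ge\nu_2\ge\lambda_2\ge\cdots\ge\nu_n\ge\lambda_n$ together with $|\nu|-|\lambda|=k$, and one checks $b_i\ge 0$ is automatic since $b_i\ge u_i-c_{i+1}\ge 0$. Since the character of $V_\lambda$ is the Schur polynomial $s_\lambda(x_1,\ldots,x_n)$ and that of $S^k(V)=V_{(k,0,\ldots,0)}$ is the complete homogeneous symmetric polynomial $h_k(x_1,\ldots,x_n)$, it suffices to establish the Pieri identity $s_\lambda\, h_k=\sum_\nu s_\nu$ in the ring of symmetric polynomials in $n$ variables, summed over $\nu$ with $\nu/\lambda$ a horizontal $k$-strip.

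For that identity I would use the combinatorial description $s_\mu=\sum_T x^T$, the sum over semistandard Young tableaux $T$ of shape $\mu$ with entries in $\{1,\ldots,n\}$, and $h_k=\sum_w x^w$ over weakly increasing words $w$ of length $k$ in $\{1,\ldots,n\}$. Then $s_\lambda h_k=\sum_{(T,w)} x^{T}x^{w}$, and the heart of the argument is a weight-preserving bijection between the pairs $(T,w)$ and the pairs $(S,R)$, where $S$ is a semistandard tableau of some shape $\nu$ and $R$ records the cells of $\nu/\lambda$; this is Robinson--Schensted--Knuth row insertion of $w$ into $T$. The two points to verify are that inserting a weakly increasing word into an SSYT of shape $\lambda$ produces an SSYT whose shape exceeds $\lambda$ by a horizontal strip (the bumping path in each successive insertion moves weakly to the right), and that $(T,w)\mapsto S$, together with the recorded new cells, is invertible via reverse bumping. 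Comparing coefficients then yields the Pieri identity and hence the lemma; alternatively one may note that this is the special case $\mu=(k)$ of the Littlewood--Richardson rule, but it is cheaper to argue it directly.

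If one prefers to avoid insertion algorithms, a second route is to apply the Jacobi--Trudi identity $s_\lambda=\det\!\big(h_{\lambda_i-i+j}\big)_{1\le i,j\le n}$, multiply through by $h_k$, and manipulate the resulting determinant using the standard two-row relations among products $h_a h_b$, performing column operations under which the alternating sum telescopes to the sum over horizontal strips. Either way the main obstacle is the same: identifying precisely which shapes $\nu$ occur and showing each occurs with multiplicity exactly one --- in the combinatorial route this is the bijectivity of RSK row insertion of a weakly increasing word together with its horizontal-strip property, and in the Jacobi--Trudi route it is the cancellation in the alternating sum. The remaining ingredients, namely complete reducibility, the dictionary between Dynkin labels and partitions, and the identification of characters with Schur polynomials, are routine bookkeeping.
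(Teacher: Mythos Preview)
The paper does not supply a proof of this lemma: it is quoted as \cite[Prop.~15.25]{FuHa} and used as a black box, so there is no argument in the paper to compare your proposal against.

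Your proposal is a correct and standard route. The translation you set up between the Dynkin labels $(u_1,\ldots,u_{n-1})$ and partitions $\lambda$ with $\lambda_i=\sum_{j\ge i}u_j$, $\lambda_n=0$, is accurate, and under $\nu_i=\lambda_i+c_i$ the constraints $c_i\ge 0$, $c_{i+1}\le u_i$, $\sum c_i=k$ do become exactly the horizontal $k$-strip condition $\nu_i\ge\lambda_i\ge\nu_{i+1}$ with $|\nu|-|\lambda|=k$; the uniqueness of the $c_i$ given $(b_1,\ldots,b_{n-1})$ (hence multiplicity one) follows since the differences $c_i-c_{i+1}=b_i-u_i$ together with $\sum c_i=k$ pin down all $c_i$. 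Either the RSK-insertion argument or the Jacobi--Trudi manipulation then finishes the job; both are textbook proofs of the Pieri rule, and the remaining dictionary steps (complete reducibility, characters equal Schur polynomials, restriction from $\mathrm{GL}_n$ to $\mathrm{SL}_n$) are routine as you say.
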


\begin{lemm} {\rm \cite[Lem. 3.4]{JWZ}} \label{reinnumber}
 Let $N$ be a normal subgroup of the finite group $G$,
and $\{\rho_i|i\in {\rm I}_G\}$ (resp. $\{\phi_i|i\in {{\rm I}_N}\}$)
the set of pairwise inequivalent complex irreducible modules of $ G$ (resp. $N$).
Let $\{\check\rho_i|i\in {\rm\check{I}}\}$ be the set of mutually inequivalent $N$-restrictions of $\rho_i's$ such that
$\check\rho_i \cap \check\rho_j=0$ for $i,j\in{\rm \check{I}}$, and
$\{\hat\phi_i|i\in {\rm \hat{I}}\}$ the set of inequivalent induced $ G$-modules.
Then $|\{\check\rho_i|i\in {\rm \check{I}}\}|=|\{\hat\phi_i|i\in {\rm \hat{I}}\}|$,
and the common cardinality is equal to $|\Upsilon(N)|$, where $\Upsilon(N)=\Upsilon\cap N$ and
$\Upsilon$ is a fixed set of $G$-conjugacy class representatives.
\end{lemm}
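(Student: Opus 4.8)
The plan is to prove that the three numbers in the statement all count one and the same thing: the number of orbits of $G$, acting by conjugation, on the set $\{\phi_i\mid i\in {\rm I}_N\}$ of irreducible $N$-modules. Here $G$ acts through $G/N$, sending an $N$-module $\phi$ to its conjugate ${}^{g}\phi$; write $\mathcal{O}_1,\dots,\mathcal{O}_m$ for the distinct orbits. The whole argument is an application of Clifford theory relative to $N\unlhd G$, together with one classical permutation-counting lemma at the end.

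First I would treat the induced modules. Because $N$ is normal, the double cosets $N\backslash G/N$ collapse to $G/N$, and Mackey's theorem gives ${\rm Res}_N\,{\rm Ind}_N^G\,\phi\cong\bigoplus_{gN\in G/N}{}^{g}\phi$; combined with Frobenius reciprocity, $\mathrm{Hom}_G({\rm Ind}_N^G\phi_i,{\rm Ind}_N^G\phi_j)\cong\mathrm{Hom}_N\bigl(\phi_i,\bigoplus_{gN}{}^{g}\phi_j\bigr)$ is nonzero exactly when $\phi_i$ and $\phi_j$ lie in the same $G$-orbit. Since we work in characteristic zero every induced module is semisimple, so ${\rm Ind}_N^G\phi_i\cong{\rm Ind}_N^G\phi_j$ precisely when $\phi_i,\phi_j$ are $G$-conjugate; hence $\{\hat\phi_i\mid i\in\hat{\rm I}\}$ is in bijection with $\{\mathcal{O}_1,\dots,\mathcal{O}_m\}$ and $|\hat{\rm I}|=m$. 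For the restriction side I would invoke Clifford's theorem: for each irreducible $G$-module $\rho$, ${\rm Res}_N\rho$ equals a common multiple $e$ of the sum of the members of a single $G$-orbit $\mathcal{O}\subseteq\{\phi_i\}$. Thus two restrictions ${\rm Res}_N\rho$ and ${\rm Res}_N\rho'$ either share that orbit (and then have nonzero intersection) or are supported on disjoint orbits; and since every $\phi_i$ occurs in ${\rm Res}_N$ of some irreducible constituent of ${\rm Ind}_N^G\phi_i$ (by Frobenius reciprocity), every orbit is hit. Hence a family of restriction modules that is pairwise inequivalent with $\check\rho_i\cap\check\rho_j=0$ is forced to consist of exactly one module supported on each $\mathcal{O}_k$, so $|\check{\rm I}|=m=|\hat{\rm I}|$.

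It remains to identify $m$ with $|\Upsilon(N)|$, the number of $G$-conjugacy classes contained in $N$; note that, $N$ being normal, each such class is a union of $N$-conjugacy classes and is a single orbit of the conjugation action of $G$ on the set of $N$-classes, so $|\Upsilon(N)|$ equals the number of $G$-orbits on $N$-classes. The needed input is the Brauer permutation lemma: the two permutation actions of $G$, on the $N$-conjugacy classes and on $\{\phi_i\}$, have equal numbers of orbits, because for each $g\in G$ the number of $N$-classes fixed by $g$ equals the number of $\phi_i$ fixed by $g$ (the character table of $N$ being an invertible matrix intertwining the two actions); Burnside's orbit-counting formula then yields equality of the orbit numbers. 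This gives $m=|\Upsilon(N)|$ and finishes the proof. The step I expect to be the most delicate is the restriction side: one must argue carefully that the normalization $\check\rho_i\cap\check\rho_j=0$, together with the fact that no $G$-orbit on $\{\phi_i\}$ is missed, pins the index set $\check{\rm I}$ down to exactly the set of orbits; by contrast the induction side and the Brauer lemma are entirely standard.
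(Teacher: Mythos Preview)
Your argument is correct and is the standard route via Clifford theory: Mackey plus Frobenius reciprocity shows that two induced modules ${\rm Ind}_N^G\phi_i$, ${\rm Ind}_N^G\phi_j$ coincide exactly when $\phi_i,\phi_j$ lie in the same $G$-orbit on ${\rm Irr}(N)$; Clifford's theorem shows that each restriction ${\rm Res}_N\rho$ is supported on a single such orbit, so a complete family with the disjointness condition $\check\rho_i\cap\check\rho_j=0$ is indexed by the orbits; and Brauer's permutation lemma identifies the number of $G$-orbits on ${\rm Irr}(N)$ with the number of $G$-orbits on $N$-conjugacy classes, which is $|\Upsilon(N)|$.

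There is nothing to compare against in this paper: the authors do not prove the lemma here but quote it from their earlier paper \cite{JWZ} (their Lemma~3.4). Your self-identified delicate point is real but only a matter of reading the statement: the phrase ``the set of mutually inequivalent $N$-restrictions \dots\ such that $\check\rho_i\cap\check\rho_j=0$'' has to be read as a \emph{complete} system of representatives (one for each orbit), not merely some family satisfying the disjointness condition; once that is granted, your surjectivity argument (every orbit arises as the support of ${\rm Res}_N\rho$ for some irreducible constituent $\rho$ of ${\rm Ind}_N^G\phi_i$) closes the gap.
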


\begin{lemm}\label{eqsym}
Let $V$ be the standard $\mathrm{SL}_n(\mathbb{C})$-module. Then
for each $k$ the following relations between symmetric and exterior powers of $V$ hold:
\begin{align}\label{sym}
    & \bigoplus_{r=0}^{[n/2]}\wedge^{2r}V\bigotimes S^{k-2r}(V) \nonumber \\
  = & \bigoplus_{r=0}^{[n/2]}\wedge^{2r+1}V\bigotimes S^{k-2r-1}(V),
\end{align}
where $S^0(V)=\wedge^0(V)=\mathbb C$, $\wedge^{r}V=0$ for $r>n$ and $S^r(V)=0$ for $r<0$.
\end{lemm}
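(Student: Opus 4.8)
The identity to prove is
$$\bigoplus_{r\ge 0}\wedge^{2r}V\otimes S^{k-2r}(V)=\bigoplus_{r\ge 0}\wedge^{2r+1}V\otimes S^{k-2r-1}(V)$$
as $\mathrm{SL}_n(\mathbb C)$-modules. The plan is to prove the equivalent single statement
$$\sum_{r=0}^{n}(-1)^r\,\wedge^{r}V\otimes S^{k-r}(V)=0\qquad(k\ge 1)$$
in the representation ring, which splits into the two sides of \eqref{sym} after collecting even and odd $r$. I would obtain this from the Koszul complex of the polynomial ring $S(V)$: the complex
$$0\to \wedge^{n}V\otimes S^{k-n}(V)\to\cdots\to\wedge^{1}V\otimes S^{k-1}(V)\to S^{k}(V)\to 0$$
is exact for every $k\ge 1$ (the Koszul resolution of the $S(V)$-module $\mathbb C$, graded in degree $k$), and all maps are $\mathrm{SL}_n(\mathbb C)$-equivariant since they are built from the canonical contraction pairing $V\otimes\wedge^{p}V\to\wedge^{p-1}V$ and multiplication $V\otimes S^{m}(V)\to S^{m+1}(V)$. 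Taking the Euler characteristic of an exact complex gives the alternating sum $=0$, hence the claimed equality.

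Alternatively, and perhaps more in the spirit of the surrounding combinatorial computations, one can prove the character identity directly: for a diagonalizable $g\in\mathrm{SL}_n(\mathbb C)$ with eigenvalues $x_1,\ldots,x_n$, the generating function identity
$$\Bigl(\sum_{r\ge 0}(-1)^r e_r(x)\,u^r\Bigr)\Bigl(\sum_{m\ge 0}h_m(x)\,u^m\Bigr)=\prod_{i=1}^{n}(1-x_iu)\cdot\prod_{i=1}^{n}\frac{1}{1-x_iu}=1$$
shows that $\sum_{r}(-1)^r\,\chi_{\wedge^r V}(g)\,\chi_{S^{k-r}(V)}(g)=0$ for all $k\ge 1$, where $e_r,h_m$ are the elementary and complete homogeneous symmetric polynomials. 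Since characters separate modules of a reductive group, this yields \eqref{sym}. The boundary conventions $S^0(V)=\wedge^0 V=\mathbb C$, $\wedge^r V=0$ for $r>n$, and $S^r(V)=0$ for $r<0$ are exactly what make the finite sums match the formal power series manipulation.

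The main point requiring a little care — rather than a genuine obstacle — is the equivariance/exactness bookkeeping: one must check that the Koszul differentials are morphisms of $\mathrm{SL}_n(\mathbb C)$-modules (immediate, as the pairings above are natural) and that the range of summation in \eqref{sym} may be truncated at $[n/2]$ precisely because $\wedge^{r}V=0$ for $r>n$, so no terms are lost. In the character approach the analogous point is justifying that the formal identity in $\mathbb C[[u]]$ may be read off coefficient-by-coefficient and that it suffices to test on a Zariski-dense set of semisimple elements. Either route is short; I would present the Koszul-complex argument as the main proof and mention the symmetric-function identity as the computational shadow of it, since the latter is the form in which \eqref{sym} will be used in the Poincar\'e series computations that follow.
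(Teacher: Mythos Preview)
Your proposal is correct, and both routes you sketch (Koszul exactness and the $e_r$--$h_m$ generating function identity) are valid and standard. The paper, however, argues differently: it invokes the Pieri rule (Lemma~\ref{sym&ext}) to write each $\wedge^{r}V\otimes S^{k-r}(V)$ explicitly as a two-term direct sum $\Gamma_{(k-r,0,\ldots,1,\ldots,0)}\oplus\Gamma_{(k-r-1,0,\ldots,1,\ldots,0)}$ (with the $1$ shifting position), and then observes that consecutive lines share a summand, so the alternating sum telescopes to zero in the Grothendieck ring. Your Koszul/symmetric-function argument is more conceptual and works for any $V$ without reference to $\mathrm{SL}_n$-highest-weight theory; the paper's Pieri approach is more hands-on and ties into the combinatorial setup already in place, at the cost of needing the explicit decomposition of each tensor product. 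Either is adequate here, and in fact your character identity is precisely the form \eqref{symchar} used downstream.
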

\begin{proof} Note that $\dim V=n$, it follows from the Pieri rule ($\mathrm{Lemma}$ \ref{sym&ext}) that
\begin{align*}
   V & \bigotimes S^{k-1}(V) = S^k(V) \bigoplus \Gamma_{(k-2,1,0,\ldots,0)}, \\
   \wedge^2V &\bigotimes S^{k-2}(V) = \Gamma_{(k-2,1,0,\ldots,0)} \bigoplus \Gamma_{(k-3,0,1,0,\ldots,0)}, \\
     & \ \ \ \vdots\\
    \wedge^{n-2}V &\bigotimes S^{k-(n-2)}(V) = \Gamma_{(k-(n-2),0,\ldots,0,1,0)} \bigoplus \Gamma_{(k-(n-1),0,\ldots,0,1)}, \\
    \wedge^{n-1}V & \bigotimes S^{k-(n-1)}(V) = \Gamma_{(k-(n-1),0,\ldots,0,1)}\bigoplus S^{k-n}(V).
\end{align*}
Then in the Grothendieck ring, one sees that
\eqref{sym} holds at the character level, which then implies that the module relations also hold.
\end{proof}

In other words, in the Grothendieck ring of $\mathrm{SL}_n$-modules one has that
\begin{equation}\label{symchar}
[S^k(V)]=\sum_{r=1}^n(-1)^r[\wedge^rV][S^{k-r}(V)].
\end{equation}

Let $N$ be an arbitrary normal group of the finite group $G$. The decomposition of
the tensor product of $V$ and arbitrary induced (resp. restriction) module of an irreducible $N$-module  (resp. $N$-module)
give rise to two adjacency matrices. In \cite{JWZ} we have shown that there
is a deep relation between the character values of the group and
these adjacency matrices in connection with the McKay-Slodowy correspondence, which
generalized Steinberg's results \cite{Ste} for the McKay correspondence. In the following we shall generalize our version of
the McKay-Slodowy correspondence to all exterior powers of the standard $\mathrm{SL}_n(\mathbb{C})$-module.

Here is our group theoretical description of the generalized McKay-Slodowy correspondence
which are direct consequence of Lemma \ref{reinnumber}, Lemmas \ref{tra&char1} and Lemma \ref{tra&char}.


\begin{lemm} \label{tra&char1}
Let $N\unlhd G$ be a pair of finite normal subgroups of $\mathrm{SL}_n(\mathbb{C})$
with $\{\check\rho_i|i\in \rm\check{{I}}\}$
(resp. $\{\hat\phi_i|i\in \rm\hat{{I}}\}$) be the set of $N$-restriction modules
(resp. induced $G$-modules).
Assume matrix $B_r$ (resp. $D_r$) is afforded by the tensor product of $\wedge^{r} \check{V}$
and $\check\rho_i$ (resp. $\hat\phi_i$) for $i\in\check{\rm I}$ (resp. $i\in\hat{\rm I}$)
and $1\leq r\leq n-1$, where $V$ is a natural module.
Let $\chi_{\wedge^{r} V}$ and $\chi_{\check\rho_i}$ (resp. $\chi_{\hat\phi_i}$) be characters of $\wedge^{r} V$
and $\check\rho_i$ (resp. $\hat\phi_i$) respectively. Then,
\begin{enumerate}
  \item The column vectors $(\chi_{\check\rho_i}(g))_{i\in {\rm\check{I}}}$ and $(\chi_{\hat\phi_i}(g))_{i\in {\rm\hat{I}}}$
  are the eigenvectors of the matrices $\chi_{\wedge^{r} V}(1) I-B_r^{T}$ and $\chi_{\wedge^{r} V}(1) I-D_r^{T}$ with eigenvalue
  $\chi_{\wedge^{r} V}(1)-\chi_{\wedge^{r} V}(g)$ respectively  for $1\leq r \leq n-1$,
  where $g$ runs through $\Upsilon(N)=N\cap \Upsilon$, $\Upsilon$ is a set of representatives of conjugacy class of $G$.
  \item The column vectors $(\chi_{\check\rho_i}(1))_{i\in {\rm\check{I}}}$ and $(\chi_{\hat\phi_i}(1))_{i\in {\rm\hat{I}}}$
  are eigenvectors of the matrices $\chi_{\wedge^{r} V}(1) I-B_r^{T}$ and $\chi_{\wedge^{r} V}(1) I-D_r^{T}$  with eigenvalue $0$ respectively.
\end{enumerate}
\end{lemm}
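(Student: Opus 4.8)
The plan is to extract everything from the two tensor-product decompositions that define the adjacency matrices, pass to the character level, and reorganize the resulting scalar identities into eigenvalue equations; the $N$-restriction case and the induced $G$-case run in parallel.

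Fix $r$ with $1\leq r\leq n-1$. For the restriction side, start from $\wedge^{r}\check V\otimes\check\rho_j=\bigoplus_{i\in\check{\rm I}}b^{(r)}_{ji}\check\rho_i$. Because $\wedge^{r}\check V$ is the $N$-restriction of the $G$-module $\wedge^{r}V$, its character on $N$ is $\chi_{\wedge^{r}V}|_N$; taking characters of both sides and evaluating at $g\in N$ gives $\chi_{\wedge^{r}V}(g)\,\chi_{\check\rho_j}(g)=\sum_{i\in\check{\rm I}}b^{(r)}_{ji}\chi_{\check\rho_i}(g)$. Setting $\mathbf c_r(g)=(\chi_{\check\rho_i}(g))_{i\in\check{\rm I}}$ and letting $j$ range over $\check{\rm I}$, these scalar identities amount to $B_r^{T}\mathbf c_r(g)=\chi_{\wedge^{r}V}(g)\,\mathbf c_r(g)$, and subtracting both sides from $\chi_{\wedge^{r}V}(1)\mathbf c_r(g)$ gives $(\chi_{\wedge^{r}V}(1)I-B_r^{T})\mathbf c_r(g)=(\chi_{\wedge^{r}V}(1)-\chi_{\wedge^{r}V}(g))\mathbf c_r(g)$, which is assertion (1) for the restriction side. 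For the induced side one argues identically from $\wedge^{r}V\otimes\hat\phi_j=\bigoplus_{i\in\hat{\rm I}}d^{(r)}_{ji}\hat\phi_i$; this time it is a decomposition of genuine $G$-modules, so the character identity is valid for all $g\in G$, and with $\hat{\mathbf c}_r(g)=(\chi_{\hat\phi_i}(g))_{i\in\hat{\rm I}}$ one obtains $(\chi_{\wedge^{r}V}(1)I-D_r^{T})\hat{\mathbf c}_r(g)=(\chi_{\wedge^{r}V}(1)-\chi_{\wedge^{r}V}(g))\hat{\mathbf c}_r(g)$.

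It then remains to see that it is enough to let $g$ run over $\Upsilon(N)=N\cap\Upsilon$. On the restriction side each $\check\rho_i$ is the restriction to $N$ of a $G$-module $\rho_i$, so $g\mapsto\chi_{\check\rho_i}(g)$ is constant on $G$-conjugacy classes, as is $g\mapsto\chi_{\wedge^{r}V}(g)$; hence the vector $\mathbf c_r(g)$ and the scalar $\chi_{\wedge^{r}V}(1)-\chi_{\wedge^{r}V}(g)$ depend only on the $G$-class of $g$, and nothing is lost by restricting $g$ to a transversal $\Upsilon(N)$ of the $G$-classes meeting $N$; the same observation applies to the class functions $\chi_{\hat\phi_i}$ on $G$. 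By Lemma \ref{reinnumber}, $|\Upsilon(N)|=|\check{\rm I}|=|\hat{\rm I}|$, the common order of the square matrices $B_r$ and $D_r$, so one even gets the expected number of such vectors. Finally, assertion (2) is the case $g=1\in\Upsilon(N)$ of (1): the eigenvalue collapses to $\chi_{\wedge^{r}V}(1)-\chi_{\wedge^{r}V}(1)=0$, while $\mathbf c_r(1)=(\chi_{\check\rho_i}(1))_{i\in\check{\rm I}}$ and $\hat{\mathbf c}_r(1)=(\chi_{\hat\phi_i}(1))_{i\in\hat{\rm I}}$ are exactly the asserted dimension vectors.

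There is no real obstacle here: given the two tensor-product decompositions, the argument is elementary character theory plus linear algebra. The only points to handle with care are that the relevant character identity is valid merely as an identity of class functions on $N$ in the restriction case (but on all of $G$ in the induced case), and that it is the $\check\rho_i$ being restrictions of $G$-modules --- hence $G$-class functions when evaluated on $N$ --- which singles out $\Upsilon(N)$, rather than the whole set of $N$-conjugacy classes, as the natural indexing set.
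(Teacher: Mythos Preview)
Your argument is correct and is precisely the standard route: pass the defining tensor decomposition to characters, read off the scalar identity as a matrix eigenvalue equation, and specialize $g=1$ for part~(2). The paper does not actually supply a proof of this lemma; it is stated without justification, and in the parallel Lemma~\ref{tra&char} the analogous eigenvector statements are simply declared ``immediate.'' So there is nothing substantive to compare --- you have written out explicitly what the paper leaves to the reader.

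One small bookkeeping remark: with the convention $\wedge^{r}\check V\otimes\check\rho_j=\bigoplus_i b^{(r)}_{ji}\check\rho_i$ and $B_r=(b^{(r)}_{ij})$ as in the introduction, the scalar identity $\chi_{\wedge^{r}V}(g)\chi_{\check\rho_j}(g)=\sum_i b^{(r)}_{ji}\chi_{\check\rho_i}(g)$ literally reads $B_r\,\mathbf c_r(g)=\chi_{\wedge^{r}V}(g)\mathbf c_r(g)$ rather than $B_r^{T}\mathbf c_r(g)=\ldots$, since $\sum_i b^{(r)}_{ji}c_i$ is the $j$th entry of $B_r\mathbf c$. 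The paper's own indexing is not entirely consistent between the introduction and the proof of Lemma~\ref{tra&char}, so this is a harmless convention issue and does not affect the substance of your argument.
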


\begin{lemm}\label{tra&char}
Let $G$ be a finite subgroup of $\mathrm{SL}_n(\mathbb{C})$.
Let $V$ be the standard $G$-module and $\{\rho_i|i\in {\rm I}_{G}\}$ be the set of irreducible $G$-modules.
Assume $A_r=(a_{ij}^{(r)})$ is afforded by the tensor product of $\wedge^{r} V $ and $\rho_j$ for $i\in {\rm I}_{G}$ and
 $1\leq r\leq n-1$. Let $\chi_{\wedge^{r} V}$ and $\chi_{\rho_i}$ be characters of $\wedge^{r} V$
and $\rho_i$ respectively. Then
\begin{enumerate}
  \item $A_r=A_{n-r}^{T}$ for $1\leq r \leq n-1$.
  \item The column vector $(\chi_{\rho_i}(g))_{i\in {\rm I}_{G}}$ of the character table of $G$ is an eigenvector of
  $\chi_{\wedge^{r} V}(1) I-{A_{n-r}}$ with eigenvalue $\chi_{\wedge^{r} V}(1)-\chi_{\wedge^{n-r} V}(g)$ for $1\leq r \leq n-1$,
  where $g$ runs over the set $\Upsilon$.
  \item The column vector $(\chi_{\rho_i}(1))_{i\in {\rm I}_{G}}$ of the degrees of character of $G$
  is an eigenvector of matrix $\chi_{\wedge^{r} V}(1) I-{A_{n-r}}$ for $1\leq r\leq n-1$ with eigenvalue $0$.
\end{enumerate}
\end{lemm}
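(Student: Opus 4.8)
The plan is to deduce all three parts from two elementary facts: the self-duality $\wedge^{r}V\cong(\wedge^{n-r}V)^{\ast}$ of the exterior powers of the standard module, and the observation that evaluating a decomposition of $G$-modules at a group element converts an adjacency matrix into a linear relation among character values.

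First I would record the duality. Since $G\leq\mathrm{SL}_n(\mathbb{C})$, every $g\in G$ acts on $\wedge^{n}V$ by $\det g=1$, so $\wedge^{n}V$ is the trivial $G$-module, and the wedge pairing $\wedge^{r}V\otimes\wedge^{n-r}V\to\wedge^{n}V\cong\mathbb{C}$ is $G$-invariant and nondegenerate. Hence $\wedge^{r}V\cong(\wedge^{n-r}V)^{\ast}$ as $G$-modules, and $\overline{\chi_{\wedge^{r}V}(g)}=\chi_{\wedge^{n-r}V}(g)$ for all $g$. For part (1), write $a^{(r)}_{ji}=\langle\chi_{\wedge^{r}V}\chi_{\rho_j},\chi_{\rho_i}\rangle$, where $\langle\cdot,\cdot\rangle$ is the Hermitian inner product of class functions. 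Moving $\chi_{\wedge^{r}V}$ to the other argument replaces it by its complex conjugate, which by the duality is $\chi_{\wedge^{n-r}V}$, so
$$a^{(r)}_{ji}=\langle\chi_{\rho_j},\chi_{\wedge^{n-r}V}\chi_{\rho_i}\rangle=a^{(n-r)}_{ij},$$
which says exactly that $A_r=A_{n-r}^{T}$.

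For part (2), fix $1\leq r\leq n-1$ and $g\in G$, and put $v_g=(\chi_{\rho_i}(g))_{i\in\mathrm{I}_G}$. Evaluating $\wedge^{n-r}V\otimes\rho_j=\bigoplus_i a^{(n-r)}_{ji}\rho_i$ at $g$ gives $\chi_{\wedge^{n-r}V}(g)\chi_{\rho_j}(g)=\sum_i a^{(n-r)}_{ji}\chi_{\rho_i}(g)$ for every $j$, i.e. $A_{n-r}v_g=\chi_{\wedge^{n-r}V}(g)\,v_g$. Subtracting this from $\chi_{\wedge^{r}V}(1)v_g$ yields
$$\bigl(\chi_{\wedge^{r}V}(1)I-A_{n-r}\bigr)v_g=\bigl(\chi_{\wedge^{r}V}(1)-\chi_{\wedge^{n-r}V}(g)\bigr)v_g,$$
which is part (2). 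Part (3) is the specialization $g=1$: then $v_1$ is the vector of character degrees, and the eigenvalue collapses to $\chi_{\wedge^{r}V}(1)-\chi_{\wedge^{n-r}V}(1)=\binom{n}{r}-\binom{n}{n-r}=0$.

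I do not anticipate a real obstacle; everything reduces to Frobenius reciprocity and character evaluation. The one place where the hypothesis $G\leq\mathrm{SL}_n$ (rather than $\mathrm{GL}_n$) is essential is the triviality of $\wedge^{n}V$, which is what produces both the transpose relation in (1) and the asymmetric appearance of $n-r$ in (2)--(3). The only bookkeeping point worth stating explicitly is that $\mathrm{I}_G$ and the chosen ordering of the $\rho_i$ are fixed once and for all, so that $A_r$, $A_{n-r}$ and $v_g$ are built on the same index set and the matrix identities are literally meaningful.
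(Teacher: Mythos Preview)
Your proposal is correct and follows essentially the same line as the paper: for (1) the paper uses the duality $(\wedge^{r}V)^{\ast}\cong\wedge^{n-r}V$ together with the tensor--Hom adjunction $\dim\mathrm{Hom}(\wedge^{r}V\otimes\rho_j,\rho_i)=\dim\mathrm{Hom}(\rho_j,(\wedge^{r}V)^{\ast}\otimes\rho_i)$, which is exactly your inner-product computation in character form, and for (2)--(3) the paper simply declares them ``immediate,'' so your explicit character-evaluation argument is a welcome elaboration rather than a departure.
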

\begin{proof} The last two statements are immediate, so we only prove the first one.
Note that $\wedge^{r} V \bigotimes \rho_j = \bigoplus_{i\in {\rm I}_{G}} a^{(r)}_{ij} \rho_i$, then
\begin{eqnarray*}
  a^{(r)}_{ij} &=& {{\dim}}(\wedge^{r} V \bigotimes \rho_j, \rho_i) \\
   &=& {{\dim}}(\rho_j, (\wedge^{r} V)^* \bigotimes \rho_i) \\
   &=& {{\dim}}(\rho_j, \wedge^{n-r} V \bigotimes \rho_i)=a^{(n-r)}_{ji},
\end{eqnarray*}
which completes the proof.
\end{proof}

For a pair of subgroups $N\unlhd G$ of $\mathrm{SL}_n(\mathbb{C})$ and the standard module $V$,
the formulas of the Poincar\'{e} series for the multiplicities of each $ G$-restriction module or induced $ N$-module
in the symmetric algebra $S(V)=\bigoplus\limits_{k \geq 0}S^{k}(V)$ are given in the next two theorems respectively.

\begin{theo}\label{thm1}
Let $N\unlhd G$ be a pair of finite subgroups of $\mathrm{SL}_n(\mathbb{C})$, and
$V$ the standard $N$-module. Let
$B_r$ be the matrix given by the tensor product of $\wedge^{r} V$ and $\check\rho_i$ ($1\leq r\leq n-1$),
and $M_1^{i}$ the matrix
$(1+(-1)^nt^n)I+\sum\limits_{r=1}^{n-1}(-1)^r B_{n-r} t^r$ with the $i$th column replaced by
$\underline{\delta}=(1, 0, \ldots, 0)^T\in\mathbb N^{|\rm{\check I}|}$.
Then the  Poincar\'e series of the multiplicities of $\check\rho_i$ in $S(V)=\bigoplus\limits_{k \geq 0}S^{k}(V)$ is
\begin{eqnarray}\label{ps1}
   \check{s}^{i}(t)&=& \frac{\mathrm{det}(M_1^{i})}{\mathrm{det}\left((1+(-1)^nt^n)I+\sum\limits_{r=1}^{n-1}(-1)^r B_{n-r}  t^r\right)} \nonumber\\
    &=& \frac{{\mathrm{det}}(M_1^{i})}{\prod\limits_{g\in \Upsilon(N)}\left(1+\sum\limits_{r=1}^{n-1}(-1)^r \chi_{\wedge^{n-r} V}(g) t^r +(-1)^nt^n\right)},
\end{eqnarray}
where $\Upsilon(N)=N\cap \Upsilon$, $\Upsilon$ is a set of representatives of the conjugacy classes of $G$.
\end{theo}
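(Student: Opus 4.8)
The plan is to work in the Grothendieck ring and extract the Poincaré series for each $N$-restriction module $\check\rho_i$ via a linear system whose coefficient matrix is exactly the polynomial matrix appearing in the denominator of \eqref{ps1}. First I would restrict the identity \eqref{symchar} to $N$ and tensor with a fixed $\check\rho_j$: since $[\wedge^r \check V]\otimes[\check\rho_j]=\sum_{i}b^{(r)}_{ji}[\check\rho_i]$ by the decomposition recalled before the statement, applying $\mathrm{Hom}_N(\check\rho_i,-)$ to
$[S^k(V)\otimes\check\rho_j]=\sum_{r=1}^n(-1)^{r+1}[\wedge^r V\otimes S^{k-r}(V)\otimes\check\rho_j]$
and summing multiplicities gives a recursion on the vectors $\underline{s}_k=(\check s^{\,i}_k)_{i\in\check{\mathrm I}}$. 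Reindexing $r\mapsto n-r$ to match the matrices $B_{n-r}$, this recursion reads, at the level of generating functions, as
$\bigl((1+(-1)^n t^n)I+\sum_{r=1}^{n-1}(-1)^r B_{n-r}^{T}t^r\bigr)\,\underline{s}(t)=\underline{\delta}$,
where $\underline{\delta}=(1,0,\dots,0)^T$ records the initial condition $S^0(V)=\mathbb C$ (the trivial $N$-module contributes only to the first coordinate once the $\check\rho_i$ are ordered so that $\check\rho_0$ is trivial). Solving this by Cramer's rule yields the first equality in \eqref{ps1}, with $M_1^i$ the matrix with $i$th column replaced by $\underline\delta$; note that transposing does not change the determinant in the denominator.

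Second, I would compute the denominator determinant explicitly. By Lemma \ref{tra&char1}(1), for each $g\in\Upsilon(N)$ the column vector $(\chi_{\check\rho_i}(g))_{i\in\check{\mathrm I}}$ is an eigenvector of $\chi_{\wedge^r V}(1)I-B_r^{T}$ with eigenvalue $\chi_{\wedge^r V}(1)-\chi_{\wedge^r V}(g)$, equivalently an eigenvector of $B_r^{T}$ with eigenvalue $\chi_{\wedge^r V}(g)$; these eigenvectors are common to all $r$ and form a basis (the number of them equals $|\Upsilon(N)|=|\check{\mathrm I}|$ by Lemma \ref{reinnumber}, and they are linearly independent as distinct columns of a character table restricted appropriately). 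Hence all the $B_r^{T}$ are simultaneously diagonalized, and the determinant of the polynomial matrix factors as a product over $g\in\Upsilon(N)$ of the scalar $1+\sum_{r=1}^{n-1}(-1)^r\chi_{\wedge^{n-r}V}(g)t^r+(-1)^n t^n$, which is the second equality in \eqref{ps1}.

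The main obstacle is justifying the passage from module identities to the numerical recursion with full rigor, and in particular handling the initial/boundary terms: the telescoping in Lemma \ref{eqsym} uses $S^r(V)=0$ for $r<0$ and $\wedge^r V=0$ for $r>n$, so the coefficient of $t^n$ comes only from the $S^{k-n}(V)$ tail and the coefficient of $t^0$ only from $r=0$, and one must check these land precisely as $(1+(-1)^n t^n)I$ and as the source vector $\underline\delta$. A secondary point is verifying that the simultaneous eigenvectors from Lemma \ref{tra&char1} indeed span, so that the determinant genuinely factors into exactly $|\Upsilon(N)|$ linear-in-each-power factors with no spurious multiplicity; this follows from Lemma \ref{reinnumber} together with the nonsingularity of the relevant submatrix of the character table of $G$. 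Everything else is routine bookkeeping with generating functions and Cramer's rule.
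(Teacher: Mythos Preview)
Your overall architecture matches the paper's: derive a linear system for the vector of Poincar\'e series from the Grothendieck identity \eqref{symchar}, solve by Cramer, then factor the denominator via the simultaneous eigenvectors of Lemma~\ref{tra&char1}. The second half (denominator factorization) is essentially what the paper does and is fine.

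The gap is in the first half. Your route---decompose $\wedge^r V\otimes\check\rho_j$ and read off coefficients---naturally produces the matrices $B_r^{T}$ paired with $t^r$, i.e.\ a system with coefficient matrix $(1+(-1)^nt^n)I+\sum_{r}(-1)^rB_r^{T}t^r$. You then write ``reindexing $r\mapsto n-r$ to match the matrices $B_{n-r}$''. That substitution is not legitimate: replacing $r$ by $n-r$ in $\sum_{r=1}^{n-1}(-1)^rB_r^{T}t^r$ changes both the sign and the power of $t$, and does \emph{not} yield $\sum_{r=1}^{n-1}(-1)^rB_{n-r}^{T}t^r$. So as written the derivation does not land on the matrix in the theorem.

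What actually converts $r$ into $n-r$ is the tensor--Hom adjunction together with the duality $(\wedge^r V)^*\cong\wedge^{n-r}V$ (valid because $N\leq\mathrm{SL}_n$, so $\wedge^nV$ is trivial). This is precisely the mechanism the paper uses:
\[
\mathrm{Hom}_N\bigl(\check\rho_i,\wedge^rV\otimes S^{k-r}(V)\bigr)
\;\cong\;
\mathrm{Hom}_N\bigl(\wedge^{n-r}V\otimes\check\rho_i,\,S^{k-r}(V)\bigr),
\]
and then one decomposes $\wedge^{n-r}V\otimes\check\rho_i=\sum_j b^{(n-r)}_{ij}\check\rho_j$ in the \emph{first} argument of $\mathrm{Hom}_N$, producing $B_{n-r}$ (no transpose) paired with $t^r$. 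Your detour through ``tensor with a fixed $\check\rho_j$'' is unnecessary and somewhat muddled; one should apply $\mathrm{Hom}_N(\check\rho_i,-)$ directly to \eqref{symchar} and then move $\wedge^rV$ across via adjunction. Once you insert this duality step, the rest of your argument (initial term $\underline\delta$, Cramer, simultaneous diagonalization) goes through exactly as you outline.
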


\begin{proof} By the equality \eqref{symchar} the multiplicity of $N$-restriction $\check{\rho}_i$ in $S^{k}(V)$ is
\begin{align*}\label{multiequ}
  \check{s}^{i}_{k} & = {{\dim}}\left({Hom}_{N}(\check\rho_i, S^{k}(V))\right) \nonumber\\
   & =\sum\limits_{r=1}^n (-1)^{r-1}{{\dim}}\left({Hom}_{N}\left(\check\rho_i, \sum\limits_{r=1}^n \wedge^rV\bigotimes S^{k-r}(V)\right)\right).
\end{align*}
Then the Poincar\'e series $\check{s}^{i}(t)$ is computed as follows, 
\begin{eqnarray*}
  \check{s}^{i}(t) &=  
   & \sum\limits_{k\geq 0}{{\dim}}\left({Hom}_{N}(\check\rho_i, S^{k}(V))\right)t^k \nonumber\\
   &=& \delta_{i0}+ \sum\limits_{k\geq 1} \sum\limits_{r=1}^n (-1)^{r-1}{{\dim}}\left({Hom}_{N}\left(\check\rho_i, \sum\limits_{r=1}^n \wedge^rV\bigotimes S^{k-r}(V)\right)\right)t^{k} \nonumber\\
   &=& \delta_{i0}+ \sum\limits_{r=1}^n (-1)^{r-1}\left(\sum\limits_{k\geq 1} {{\dim}}\left({Hom}_{N}
   ((\wedge^rV)^{*}\bigotimes\check\rho_i , S^{k-r}(V))\right)t^{k} \right) \nonumber\\
   &=& \delta_{i0}+ \sum\limits_{r=1}^n (-1)^{r-1}\left(\sum\limits_{k\geq 1} {{\dim}}\left({Hom}_{N}
   (\wedge^{n-r}V\bigotimes\check\rho_i , S^{k-r}(V))\right)t^{k} \right) \nonumber\\
   &=& \delta_{i0}+\sum\limits_{r=1}^{n-1} (-1)^{r-1}\left(\sum\limits_{k\geq 1} {{\dim}}\left({Hom}_{N}
   \left(\sum_{j\in {\check I}} b^{(n-r)}_{ij}\check\rho_j , S^{k-r}(V)\right)\right)t^{k} \right) \nonumber\\
   & & +(-1)^{n-1}\sum\limits_{k\geq 1} {{\dim}}\left({Hom}_{N}(\check\rho_i , S^{k-n}(V))\right)t^{k} \nonumber\\
\end{eqnarray*}
\begin{eqnarray}
     &=& \delta_{i0}+ \sum\limits_{r=1}^{n-1} (-1)^{r-1}\left(\sum_{j\in {\check I}} b^{(n-r)}_{ij}\sum\limits_{k\geq 0} {{\dim}}
   \left({Hom}_{N}(\check\rho_j , S^{k}(V))\right)t^{k} \right)t^r \nonumber\\
    \label{thm1eq1}
   & & +(-1)^{n-1}s^{i}(t)t^n \nonumber\\
   &=& \delta_{i0}+ \sum\limits_{r=1}^{n-1} (-1)^{r-1}\left(\sum_{j\in {\check I}} b^{(n-r)}_{ij} s^{j}(t) \right)t^r +(-1)^{n-1}s^{i}(t)t^n.
\end{eqnarray}
Let $\underline{s}=(\check{s}^i(t))_{i\in {\rm\check{I}}}$ be the column vector formed by the Poincar\'e series,
the identity \eqref{thm1eq1} is then written as the matrix equation
\begin{align*}
 \left((1+(-1)^nt^n)I+\sum\limits_{r=1}^{n-1}(-1)^r B_{n-r} t^r\right)\underline{s}=\underline{\delta}.
\end{align*}
Using the Cramer's rule, the Poincar\'e series $\check{s}^{i}(t)$ is equal to a quotient of two determinants.

Since $\chi_{\wedge^{r} V}(1)-\chi_{\wedge^{r} V}(g)$ ($1\leq r \leq n-1$) are
all eigenvalues of the matrix $\chi_{\wedge^{r} V}(1) I-B_{r}^T$, where
$g$ runs through $\Upsilon(N)=N\cap \Upsilon$ of $\mathrm{Lemma}$ \ref{tra&char1}. Subsequently
$\sum_{r=1}^{n-1}(-1)^{r-1} (\chi_{\wedge^{n-r} V}(1)-\chi_{\wedge^{n-r} V}(g)) t^{r-1}$
are all the eigenvalues of the matrix $\sum_{r=1}^{n-1}(-1)^{r-1}(\chi_{\wedge^{n-r} V}(1) I- B_{n-r}) t^{r-1}$.
Thus,
\begin{align*}
  &\mathrm{det}\left(\lambda I-\left(\sum_{r=1}^{n-1}(-1)^{r-1} B_{n-r} t^{r-1}\right)\right) \\
 =&\prod\limits_{g\in \Upsilon(N)}\left(\lambda-\left(\sum_{r=1}^{n-1}(-1)^{r-1} \chi_{\wedge^{n-r} V}(g) t^{r-1}\right)\right).
\end{align*}
Let $m=|\Upsilon(N)|=|\rm\check{{I}}|$, we have
\begin{align*}
    & \mathrm{det}\left((1+(-1)^nt^n)I+\sum\limits_{r=1}^{n-1}(-1)^r B_{n-r} t^r\right) \\
  = & t^m\mathrm{det}\left((t^{-1}+(-1)^nt^{n-1})I-\left(\sum_{r=1}^{n-1}(-1)^{r-1} B_{n-r} t^{r-1}\right)\right) \\
  = & t^m\prod\limits_{g\in \Upsilon(N)}\left((t^{-1}+(-1)^nt^{n-1})-\left(\sum_{r=1}^{n-1}(-1)^{r-1} \chi_{\wedge^{n-r} V}(g) t^{r-1}\right)\right) \\
  = & \prod\limits_{g\in \Upsilon(N)}\left(1+\sum\limits_{r=1}^{n-1}(-1)^r \chi_{\wedge^{n-r} V}(g) t^r +(-1)^nt^n\right).
\end{align*}
This completes the proof.
\end{proof}

Using the same method, the Poincar\'{e} series of induced $ N$-modules in the symmetric algebra can also be computed:
\begin{theo}\label{thm2}
Let $N\unlhd G$ be a pair of finite normal subgroups of $\mathrm{SL}_n(\mathbb{C})$ and
$\{\hat\phi_i |i\in\rm\hat{{I}}\}$ be the set of the induced
$G$-modules of all irreducible $N$-modules.
Assume that $V$ is the standard $G$-module and
$D_{r}$ is the matrix afforded by $\wedge^{r} V$ tensor $\hat\phi_i$ for $1\leq r\leq n-1$.
Let  $M_2^{i}$ be the matrix
$(1+(-1)^nt^n)I+\sum\limits_{r=1}^{n-1}(-1)^r D_{n-r} t^r$ with the $i$th column replaced by
$\underline{\delta}=(1, 0, \ldots, 0)^T\in\mathbb N^{|{\rm\hat I}|}$.
Then the  Poincar\'e series of $\hat{\phi}_i$ in $S(V)=\bigoplus\limits_{k \geq 0}S^{k}(V)$ is
\begin{eqnarray}\label{ps2}
   \hat{s}^{i}(t)&=& \frac{\mathrm{det}(M_2^{i})}{\mathrm{det}\left((1+(-1)^nt^n)I+\sum\limits_{r=1}^{n-1}(-1)^r D_{n-r} t^r\right)} \nonumber\\
    &=& \frac{{\mathrm{det}}(M_2^{i})}{\prod\limits_{g\in \Upsilon(N)}\left(1+\sum\limits_{r=1}^{n-1}(-1)^r \chi_{\wedge^{n-r} V}(g) t^r +(-1)^nt^n\right)},
\end{eqnarray}
where $\Upsilon(N)=N\cap \Upsilon$, and $\Upsilon$ is a set of representatives of conjugacy classes of $G$.
\end{theo}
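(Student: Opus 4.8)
The plan is to mirror the proof of Theorem~\ref{thm1} verbatim, replacing the $N$-restriction modules $\check\rho_i$ and the matrices $B_r$ by the induced $G$-modules $\hat\phi_i$ and the matrices $D_r$. First I would apply the functor $\mathrm{Hom}_G(\hat\phi_i,-)$ to the Grothendieck-ring identity coming from Lemma~\ref{eqsym} / \eqref{symchar}, used in the form $[S^k(V)]=\sum_{r=1}^n(-1)^{r-1}[\wedge^rV][S^{k-r}(V)]$ for $k\geq 1$. Using the tensor--Hom adjunction $\mathrm{Hom}_G(\hat\phi_i,\wedge^rV\otimes S^{k-r}(V))\cong\mathrm{Hom}_G((\wedge^rV)^{*}\otimes\hat\phi_i,S^{k-r}(V))$ together with the $\mathrm{SL}_n(\mathbb C)$-module isomorphism $(\wedge^rV)^{*}\cong\wedge^{n-r}V$ --- which is an isomorphism of $G$-modules because $G\leq\mathrm{SL}_n(\mathbb C)$ acts trivially on $\wedge^nV$ --- and then decomposing $\wedge^{n-r}V\otimes\hat\phi_i=\bigoplus_{j\in\hat{\mathrm{I}}}d^{(n-r)}_{ij}\hat\phi_j$ for $1\leq r\leq n-1$, while $\wedge^0V\otimes\hat\phi_i=\hat\phi_i$ supplies the $r=n$ term, one obtains for $k\geq 1$
\[
\hat s^{i}_{k}=\sum_{r=1}^{n-1}(-1)^{r-1}\sum_{j\in\hat{\mathrm{I}}}d^{(n-r)}_{ij}\,\hat s^{j}_{k-r}+(-1)^{n-1}\hat s^{i}_{k-n},
\]
whereas $\hat s^{i}_{0}=\dim\mathrm{Hom}_G(\hat\phi_i,\mathbb C)=\delta_{i0}$, since the induced module of the trivial $N$-module is the only $\hat\phi_i$ containing the trivial $G$-module.

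Next I would multiply by $t^k$, sum over $k\geq 0$, and collect the shifted series (using $\hat s^{j}_{m}=0$ for $m<0$), which yields the scalar identity --- the exact analogue of \eqref{thm1eq1} ---
\[
\hat s^{i}(t)=\delta_{i0}+\sum_{r=1}^{n-1}(-1)^{r-1}\Bigl(\sum_{j\in\hat{\mathrm{I}}}d^{(n-r)}_{ij}\,\hat s^{j}(t)\Bigr)t^{r}+(-1)^{n-1}\hat s^{i}(t)\,t^{n}.
\]
Writing $\underline{\hat s}=(\hat s^{i}(t))_{i\in\hat{\mathrm{I}}}$ and rearranging (note $-( -1)^{r-1}=(-1)^r$ and $1-(-1)^{n-1}t^n=1+(-1)^nt^n$), this becomes the matrix equation $\bigl((1+(-1)^nt^n)I+\sum_{r=1}^{n-1}(-1)^rD_{n-r}t^r\bigr)\underline{\hat s}=\underline{\delta}$, so Cramer's rule immediately gives $\hat s^{i}(t)=\det(M_2^{i})/\det\bigl((1+(-1)^nt^n)I+\sum_{r=1}^{n-1}(-1)^rD_{n-r}t^r\bigr)$, which is the first equality in \eqref{ps2}.

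For the closed product in the denominator I would invoke Lemma~\ref{tra&char1}(1): for each $g\in\Upsilon(N)$ the vector $v_g=(\chi_{\hat\phi_i}(g))_{i\in\hat{\mathrm{I}}}$ is, simultaneously for all $r$, an eigenvector of $\chi_{\wedge^{n-r}V}(1)I-D_{n-r}^{T}$ with eigenvalue $\chi_{\wedge^{n-r}V}(1)-\chi_{\wedge^{n-r}V}(g)$, hence an eigenvector of $D_{n-r}^{T}$ with eigenvalue $\chi_{\wedge^{n-r}V}(g)$. By Lemma~\ref{reinnumber} the set $\Upsilon(N)$ has cardinality $m:=|\hat{\mathrm{I}}|$, and the $v_g$ are linearly independent (exactly as used in the proof of Theorem~\ref{thm1}), so setting $Q(t):=\sum_{r=1}^{n-1}(-1)^{r-1}D_{n-r}t^{r-1}$, the transpose $Q(t)^{T}$ is diagonalized by the $v_g$ with eigenvalues $q_g(t):=\sum_{r=1}^{n-1}(-1)^{r-1}\chi_{\wedge^{n-r}V}(g)t^{r-1}$. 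Since $\det X=\det X^{T}$, this gives $\det(\mu I-Q(t))=\prod_{g\in\Upsilon(N)}(\mu-q_g(t))$. Observing that $\sum_{r=1}^{n-1}(-1)^rD_{n-r}t^r=-tQ(t)$, so that $(1+(-1)^nt^n)I+\sum_{r=1}^{n-1}(-1)^rD_{n-r}t^r=t^m$ times $\bigl((t^{-1}+(-1)^nt^{n-1})I-Q(t)\bigr)$ under the determinant, I would substitute $\mu=t^{-1}+(-1)^nt^{n-1}$, distribute one factor of $t$ into each of the $m$ factors, and use $-tq_g(t)=\sum_{r=1}^{n-1}(-1)^r\chi_{\wedge^{n-r}V}(g)t^r$, collapsing the right-hand side to $\prod_{g\in\Upsilon(N)}\bigl(1+\sum_{r=1}^{n-1}(-1)^r\chi_{\wedge^{n-r}V}(g)t^r+(-1)^nt^n\bigr)$. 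This is the second equality in \eqref{ps2}.

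The step requiring the most care is purely bookkeeping rather than conceptual: one must keep the three boundary contributions straight --- the $r=n$ term, where $\wedge^nV=\mathbb C$ feeds $\hat s^{i}(t)$ back into itself with a shift of $n$; the vanishing $S^{k-r}(V)=0$ for $k<r$, which is precisely what lets the geometric-series resummation close up into a finite linear system; and the initial value at $k=0$, which produces the inhomogeneous term $\underline{\delta}$ --- so that the system assembles exactly into the matrices $D_{n-r}$ with the signs as stated. One should also note that the transpose appearing in Lemma~\ref{tra&char1} is harmless here, since only determinants enter. No idea beyond those already employed for Theorem~\ref{thm1} is needed; the entire argument is the $\hat\phi$-analogue of that proof.
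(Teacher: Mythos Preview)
Your proposal is correct and is precisely the approach the paper intends: the paper does not give a separate proof of Theorem~\ref{thm2} but simply states that it is obtained ``using the same method'' as Theorem~\ref{thm1}, and your write-up carries out that substitution $\check\rho_i\leadsto\hat\phi_i$, $B_r\leadsto D_r$ faithfully, including the same use of \eqref{symchar}, the adjunction $(\wedge^rV)^*\cong\wedge^{n-r}V$, Cramer's rule, and Lemma~\ref{tra&char1} together with Lemma~\ref{reinnumber} for the denominator factorization. The only point you make more explicit than the paper is the justification of $\hat s^{i}_{0}=\delta_{i0}$ via Frobenius reciprocity, which is a welcome clarification rather than a deviation.
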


If $N=G\leq\mathrm{SL}_n(\mathbb{C})$,
then the method of $\mathrm{Theorem}$ \ref{thm1} also gives
a formula of the Poincar\'{e} series for $G$-modules in $S(V)=\bigoplus\limits_{k \geq 0}S^{k}(V)$
associated with ${\mathrm{Lemma}}$ \ref{eqsym} and ${\mathrm{Lemma}}$ \ref{tra&char}.

\begin{theo}\label{thm3}
Let $G$ be a finite subgroup of $\mathrm{SL}_n(\mathbb{C})$ and $\{\rho_i|i\in {\rm I}_{G}\}$
be the set of complex irreducible $G$-modules. Assume that $V$ is the standard $G$-module and
$A_r$ is the matrix afforded by $\wedge^{r} V$ tensor $\rho_i$ for $1\leq r\leq n-1$.
Let  $M^{i}$ be the matrix
$(1+(-1)^nt^n)I+\sum\limits_{r=1}^{n-1}(-1)^r A_{n-r} t^r$ with the $i$th column replaced by
$\underline{\delta}=(1, 0, \ldots, 0)^T\in\mathbb N^{|{\rm I}_{G}|}$.
Then the  Poincar\'e series of $\rho_i$ in $S(V)=\bigoplus\limits_{k \geq 0}S^{k}(V)$ is
\begin{eqnarray}\label{ps3}
   s^{i}(t) &=& \frac{\mathrm{det}(M^{i})}{\mathrm{det}\left((1+(-1)^nt^n)I+\sum\limits_{r=1}^{n-1}(-1)^r A_{n-r} t^r\right)} \nonumber\\
    &=& \frac{{\mathrm{det}}(M^{i})}{\prod\limits_{g\in \Upsilon}\left(1+\sum\limits_{r=1}^{n-1}(-1)^r \chi_{\wedge^{n-r} V}(g) t^r +(-1)^nt^n\right)},
\end{eqnarray}
where $\Upsilon$ is a set of conjugacy class representatives of $G$.
\end{theo}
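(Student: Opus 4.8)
The plan is to run the argument of Theorem \ref{thm1} in the special case $N=G$: the set $\{\check\rho_i\}$ of restriction modules is replaced by the set $\{\rho_i\mid i\in\mathrm I_G\}$ of irreducible $G$-modules, the matrices $B_r$ by the tensor matrices $A_r$, the set $\Upsilon(N)=N\cap\Upsilon$ by the full set $\Upsilon$ of conjugacy-class representatives of $G$, and Lemma \ref{tra&char1} by Lemma \ref{tra&char}. The starting input is Lemma \ref{eqsym}: rearranging the identity \eqref{sym}, one has in the Grothendieck ring of $\mathrm{SL}_n(\mathbb C)$-modules, hence of $G$-modules after restriction,
\[
[S^k(V)]=\sum_{r=1}^{n}(-1)^{r-1}[\wedge^rV]\,[S^{k-r}(V)].
\]
First I would apply the additive functor $\dim\mathrm{Hom}_G(\rho_i,-)$ to this relation; together with the separate observation $s^i_0=\delta_{i0}$ this gives, for $k\ge1$,
\[
s^i_k=\sum_{r=1}^{n}(-1)^{r-1}\dim\mathrm{Hom}_G\bigl(\rho_i,\wedge^rV\otimes S^{k-r}(V)\bigr).
\]

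Next, using adjunction each summand becomes $\dim\mathrm{Hom}_G\bigl((\wedge^rV)^{*}\otimes\rho_i,\,S^{k-r}(V)\bigr)$, and the $\mathrm{SL}_n(\mathbb C)$-module isomorphism $(\wedge^rV)^{*}\cong\wedge^{n-r}V$ (which also holds as $G$-modules, since $\wedge^nV$ is trivial) turns it into $\dim\mathrm{Hom}_G\bigl(\wedge^{n-r}V\otimes\rho_i,\,S^{k-r}(V)\bigr)$. For $1\le r\le n-1$ I would expand $\wedge^{n-r}V\otimes\rho_i$ into irreducibles using $A_{n-r}$, while the $r=n$ term contributes only $(-1)^{n-1}s^i(t)t^n$ because $\wedge^nV\cong\mathbb C$. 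Collecting powers of $t$ yields the recursion
\[
s^i(t)=\delta_{i0}+\sum_{r=1}^{n-1}(-1)^{r-1}\Bigl(\sum_{j}a^{(n-r)}_{ij}s^j(t)\Bigr)t^r+(-1)^{n-1}s^i(t)t^n,
\]
exactly as in the proof of Theorem \ref{thm1}. With $\underline s=(s^i(t))_{i\in\mathrm I_G}$ and $\underline\delta=(1,0,\dots,0)^T$ this is the linear system $\bigl((1+(-1)^nt^n)I+\sum_{r=1}^{n-1}(-1)^rA_{n-r}t^r\bigr)\underline s=\underline\delta$, and Cramer's rule gives $s^i(t)=\det(M^i)/\det\bigl((1+(-1)^nt^n)I+\sum_{r=1}^{n-1}(-1)^rA_{n-r}t^r\bigr)$.

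It then remains to put the denominator into product form. Factoring one $t$ out of each row, with $m=|\mathrm I_G|$,
\[
\det\Bigl((1+(-1)^nt^n)I+\sum_{r=1}^{n-1}(-1)^rA_{n-r}t^r\Bigr)=t^{m}\det\Bigl((t^{-1}+(-1)^nt^{n-1})I-\sum_{r=1}^{n-1}(-1)^{r-1}A_{n-r}t^{r-1}\Bigr).
\]
By Lemma \ref{tra&char}(2)--(3) each column $(\chi_{\rho_i}(g))_{i\in\mathrm I_G}$ of the character table of $G$, for $g$ ranging over $\Upsilon$, is an eigenvector of $A_{n-r}$ with eigenvalue $\chi_{\wedge^{n-r}V}(g)$; since the character table is invertible these $m$ vectors form a common eigenbasis, so $\sum_{r=1}^{n-1}(-1)^{r-1}A_{n-r}t^{r-1}$ is simultaneously diagonalized with eigenvalues $\sum_{r=1}^{n-1}(-1)^{r-1}\chi_{\wedge^{n-r}V}(g)t^{r-1}$. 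Hence its characteristic polynomial factors as $\prod_{g\in\Upsilon}\bigl(\lambda-\sum_{r=1}^{n-1}(-1)^{r-1}\chi_{\wedge^{n-r}V}(g)t^{r-1}\bigr)$; specializing $\lambda=t^{-1}+(-1)^nt^{n-1}$ and multiplying back by $t^m=\prod_{g\in\Upsilon}t$ gives $\prod_{g\in\Upsilon}\bigl(1+\sum_{r=1}^{n-1}(-1)^r\chi_{\wedge^{n-r}V}(g)t^r+(-1)^nt^n\bigr)$, which is the claimed expression.

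The hard part will be bookkeeping rather than any new idea: keeping the index/transpose conventions for $A_r$ versus $A_{n-r}$ consistent when passing from the module decomposition to the matrix equation — this is precisely where Lemma \ref{tra&char}(1), $A_r=A_{n-r}^T$, is invoked, and, combined with the transpose-invariance of the determinant, it shows the denominator does not depend on this choice — and handling the low-degree boundary terms, namely the $\delta_{i0}$ coming from $k=0$ and the $r=n$ contribution coming from $\wedge^nV\cong\mathbb C$. Once those are settled, the simultaneous-diagonalization step and the rescaling by $t^m$ are routine, and the whole proof is a faithful transcription of the proof of Theorem \ref{thm1} (and of Theorem \ref{thm2}).
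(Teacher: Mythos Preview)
Your proposal is correct and is precisely the approach the paper indicates: the paper does not give a separate proof of this theorem but simply states that the method of Theorem~\ref{thm1} (together with Lemma~\ref{eqsym} and Lemma~\ref{tra&char}) yields the result when $N=G$. Your write-up is in fact more detailed than the paper's, spelling out the adjunction step, the simultaneous diagonalization via the invertibility of the character table, and the role of Lemma~\ref{tra&char}(1) in reconciling the transpose conventions.
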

\subsection{$\mathrm{SL}_3(\mathbb{C})$ and $\mathrm{SL}_4(\mathbb{C})$}

The classification of the finite subgroups of the special linear groups $\mathrm{SL}_3(\mathbb{C})$ and $\mathrm{SL}_4(\mathbb{C})$ are given in \cite{YaYu, HaHe}.
Let $G$ be a finite subgroup of $\mathrm{SL}_3(\mathbb{C})$ or $\mathrm{SL}_4(\mathbb{C})$ and $V$ the standard $G$-module,
we can see the denominator of \eqref{ps3} are connected only with the character of $V$
by applying ${\mathrm{Lemma}}$ \ref{tra&char} and Theorem \ref{thm3} to $G$.

\begin{coro}
Let $G$ be a finite subgroup of $\mathrm{SL}_3(\mathbb{C})$ and $\{\rho_i|i\in {\rm I}_{G}\}$
the set of complex irreducible $G$-modules. Assume that $V$ is the standard $G$-module and
$A_1$ is the adjacency matrix of $\mathcal{R}_{V}(G)$.
Let  $M^{i}$ be the matrix
$(1-t^3)I- A_1 t+ A_1^T t^2$ with the $i$th column replaced by
$\underline{\delta}=(1, 0, \ldots, 0)^T\in\mathbb N^{|{\rm I}_{G}|}$.
Then the  Poincar\'e series of $\rho_i$ in $S(V)=\bigoplus\limits_{k \geq 0}S^{k}(V)$ is
\begin{eqnarray*}
   s^{i}(t) &=& \frac{\mathrm{det}(M^{i})}{\mathrm{det}\left((1-t^3)I- A_1 t+ A_1^T t^2\right)} \\
    &=& \frac{{\mathrm{det}}(M^{i})}{\prod\limits_{g\in \Upsilon}\left(1-\overline{\chi_{V}(g)} t + \chi_{V}(g) t^2-t^3\right)},
\end{eqnarray*}
where $\overline{\chi_{V}(g)}$ is the conjugate of $\chi_{V}(g)$ and $\Upsilon$ is a set of conjugacy class representatives of $G$.
\end{coro}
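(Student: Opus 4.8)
The plan is to specialize Theorem \ref{thm3} to the case $n=3$ and simplify the pieces using Lemma \ref{tra&char}. Since $\mathrm{SL}_3(\mathbb{C})$ has only two nontrivial fundamental modules, namely $V=\wedge^1 V$ and $\wedge^2 V$, the general formula collapses to a manageable shape. First I would write out $(1+(-1)^3 t^3)I + \sum_{r=1}^{2}(-1)^r A_{3-r}t^r = (1-t^3)I - A_{2}t + A_{1}t^2$. The key simplification is that by part (1) of Lemma \ref{tra&char}, $A_{n-r}=A_r^{T}$, so for $n=3$ we have $A_{2}=A_{1}^{T}$. Substituting gives the matrix $(1-t^3)I - A_{1}^{T}t + A_{1}t^2$. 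There is a small indexing subtlety: the corollary writes $(1-t^3)I - A_1 t + A_1^T t^2$, which is the transpose of what I just wrote; since $\det(X)=\det(X^T)$ for any square matrix $X$, the denominator is unaffected by which convention one picks, and one checks that the numerator $\det(M^i)$ is likewise consistent once one is careful about whether columns or rows are being replaced. I would remark on this transpose identification explicitly so the reader is not confused.

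**Next I would handle the product formula.** Theorem \ref{thm3} already gives
\[
\det\!\left((1+(-1)^nt^n)I+\sum_{r=1}^{n-1}(-1)^r A_{n-r}t^r\right)=\prod_{g\in\Upsilon}\left(1+\sum_{r=1}^{n-1}(-1)^r\chi_{\wedge^{n-r}V}(g)t^r+(-1)^nt^n\right),
\]
so for $n=3$ the $g$-factor is $1-\chi_{\wedge^{2}V}(g)t+\chi_{\wedge^{1}V}(g)t^2-t^3$. The remaining point is to identify $\chi_{\wedge^2 V}(g)$ with $\overline{\chi_V(g)}$. This is the classical fact that for $G\leq\mathrm{SL}_3(\mathbb{C})$, the module $\wedge^2 V$ is isomorphic to the dual $V^{*}$: indeed $\wedge^{n-1}V\cong V^{*}\otimes\wedge^n V$ and $\wedge^n V$ is the trivial module because $G\subseteq\mathrm{SL}_n$, so $\wedge^2 V\cong V^{*}$ when $n=3$. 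Since $G$ is finite, every representation is unitarizable, hence $\chi_{V^{*}}(g)=\overline{\chi_V(g)}$. Plugging this in turns the $g$-factor into $1-\overline{\chi_V(g)}\,t+\chi_V(g)\,t^2-t^3$, which is exactly the stated denominator.

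**The only genuinely delicate bookkeeping** is making sure the matrix $A_1$ appearing in the corollary really is the adjacency matrix $A_1=(a^{(1)}_{ij})$ of the generalized McKay quiver $\mathcal{R}_V(G)$ as defined in the introduction (following \cite{HJC}), and that no transpose has crept in between "the matrix afforded by $\wedge^r V$ tensor $\rho_i$" in Theorem \ref{thm3} and the adjacency matrix convention. I would resolve this once and for all by invoking $\det(X)=\det(X^{T})$ at the level of both numerator and denominator: since $(1-t^3)I-A_1^{T}t+A_1 t^2$ and $(1-t^3)I-A_1 t+A_1^{T}t^2$ are transposes of each other, their determinants agree, and the same holds for the respective $M^i$ matrices with the $i$th column (resp. row) replaced — so the final rational function is independent of the convention. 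I do not expect any hard obstacle here; the corollary is essentially a transcription of Theorem \ref{thm3} at $n=3$ together with the two elementary representation-theoretic identifications $A_2=A_1^T$ and $\wedge^2 V\cong V^{*}$, and the proof is two or three lines.
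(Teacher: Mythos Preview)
Your proposal is correct and follows essentially the same approach as the paper: the paper does not give an explicit proof of this corollary but simply remarks that one applies Lemma \ref{tra&char} and Theorem \ref{thm3} with $n=3$, which is precisely what you do, together with the identification $\chi_{\wedge^{2}V}(g)=\chi_{V^{*}}(g)=\overline{\chi_{V}(g)}$ coming from $\wedge^{2}V\cong V^{*}$ for $G\leq\mathrm{SL}_3(\mathbb{C})$. Your remark on the transpose ambiguity between $(1-t^3)I-A_1^{T}t+A_1t^2$ and $(1-t^3)I-A_1t+A_1^{T}t^2$ is a fair observation about the paper's own conventions; it does not affect the denominator, and the numerator discrepancy is at worst a relabelling of the index $i$ that you correctly flag as bookkeeping rather than substance.
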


\begin{coro}
Let $G$ be a finite subgroup of $\mathrm{SL}_4(\mathbb{C})$ and $\{\rho_i|i\in {\rm I}_{G}\}$
the set of complex irreducible $G$-modules. Assume $V$ is the standard $G$-module,
$A_1$ and $A_2$ are matrices afforded by $V$ and $\wedge^{2} V$ tensored with $\rho_i$ respectively.
Let  $M^{i}$ be the matrix
$(1+t^4)I- A_1 t+A_2 t^2 +A_1^T t^3$ with the $i$th column replaced by
$\underline{\delta}=(1, 0, \ldots, 0)^T\in\mathbb N^{|{\rm I}_{G}|}$.
Then the  Poincar\'e series of $\rho_i$ in $S(V)=\bigoplus\limits_{k \geq 0}S^{k}(V)$ is
\begin{eqnarray*}
   s^{i}(t) &=& \frac{\mathrm{det}(M^{i})}{\mathrm{det}\left((1+t^4)I- A_1^T t+A_2 t^2 -A_1 t^3\right)} \nonumber\\
    &=& \frac{{\mathrm{det}}(M^{i})}{\prod\limits_{g\in \Upsilon}\left(1-\overline{\chi_{V}(g)} t + \frac{1}{2}({\chi_{V}(g)}^2-\chi_{V}(g^2))t^2-\chi_{V}(g) t^3+t^4\right)},
\end{eqnarray*}
where $\overline{\chi_{V}(g)}$ is the conjugate of $\chi_{V}(g)$ and $\Upsilon$ is a set of conjugacy class representatives of $G$.
\end{coro}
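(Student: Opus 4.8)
The plan is to obtain the corollary directly from Theorem~\ref{thm3} with $n=4$, after rewriting the general data $A_{n-r}$ and $\chi_{\wedge^{n-r}V}(g)$ in the form asserted. Specializing Theorem~\ref{thm3} to $n=4$, the denominator matrix is $(1+t^{4})I-A_{3}t+A_{2}t^{2}-A_{1}t^{3}$, the factorized denominator is $\prod_{g\in\Upsilon}\bigl(1-\chi_{\wedge^{3}V}(g)\,t+\chi_{\wedge^{2}V}(g)\,t^{2}-\chi_{\wedge^{1}V}(g)\,t^{3}+t^{4}\bigr)$, and the numerator is the determinant of this matrix with its $i$th column replaced by $\underline{\delta}$.

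First I would apply Lemma~\ref{tra&char}(1): from $A_{r}=A_{n-r}^{T}$ one gets $A_{3}=A_{1}^{T}$ for $n=4$ (and $A_{2}$ symmetric), so the denominator matrix becomes $(1+t^{4})I-A_{1}^{T}t+A_{2}t^{2}-A_{1}t^{3}$, which is the matrix in the statement, and $M^{i}$ is obtained from it by replacing the $i$th column with $\underline{\delta}$; since a determinant is unchanged under transposition the precise orientation of $A_{1}$ versus $A_{1}^{T}$ is immaterial in the final ratio. The remaining input, and the only genuine computation, is the evaluation of the three exterior-power characters at a class representative $g$: $\chi_{\wedge^{1}V}(g)=\chi_{V}(g)$ is immediate; writing $g$ in terms of its eigenvalues on $V$ gives the classical identity $\chi_{\wedge^{2}V}(g)=\tfrac12\bigl(\chi_{V}(g)^{2}-\chi_{V}(g^{2})\bigr)$; and since $G\leq\mathrm{SL}_{4}(\mathbb{C})$ the module $\wedge^{4}V$ is trivial, so the nondegenerate pairing $\wedge^{3}V\otimes V\to\wedge^{4}V$ identifies $\wedge^{3}V$ with $V^{*}$, giving $\chi_{\wedge^{3}V}(g)=\chi_{V^{*}}(g)=\overline{\chi_{V}(g)}$ because $g$ has finite order. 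Substituting these values into the factorized denominator of Theorem~\ref{thm3} reproduces $\prod_{g\in\Upsilon}\bigl(1-\overline{\chi_{V}(g)}\,t+\tfrac12(\chi_{V}(g)^{2}-\chi_{V}(g^{2}))\,t^{2}-\chi_{V}(g)\,t^{3}+t^{4}\bigr)$, which completes the argument.

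No real obstacle is expected, since everything reduces to substitution into Theorem~\ref{thm3} and Lemma~\ref{tra&char}. The single point meriting a word of justification is the isomorphism $\wedge^{3}V\cong V^{*}$: it rests on $\wedge^{4}V$ being trivial, i.e.\ on $G$ lying in $\mathrm{SL}_{4}(\mathbb{C})$ rather than $\mathrm{GL}_{4}(\mathbb{C})$, and it is the one-dimension-higher analogue of the identity $\wedge^{2}V\cong V^{*}$ used in the $\mathrm{SL}_{3}(\mathbb{C})$ corollary just above.
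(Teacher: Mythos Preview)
Your proposal is correct and follows essentially the same approach as the paper: the paper states explicitly (in the sentence preceding the two corollaries) that these results follow by applying Lemma~\ref{tra&char} and Theorem~\ref{thm3} to $G$, and you carry this out by specializing Theorem~\ref{thm3} to $n=4$, using $A_{3}=A_{1}^{T}$ from Lemma~\ref{tra&char}(1), and evaluating the exterior-power characters via the standard identities $\chi_{\wedge^{2}V}(g)=\tfrac12(\chi_{V}(g)^{2}-\chi_{V}(g^{2}))$ and $\wedge^{3}V\cong V^{*}$. Your remark that the $\mathrm{SL}_{4}$ hypothesis is what makes $\wedge^{4}V$ trivial (and hence $\wedge^{3}V\cong V^{*}$) is exactly the point the paper leaves implicit.
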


\subsection{The McKay-Slodowy correspondence} 

\qquad

It is well known that the isomorphism classes of
the finite subgroups of special linear group $\mathrm{SL}_2(\mathbb{C})$  are
a cyclic group ${C}_n$ of order $n$, a binary dihedral group ${D}_{n}$ of order $4n$ and
three exceptional polyhedral groups: the binary tetrahedral ${T}$ of order $24$,
the binary octahedral group ${O}$ of order $48$, and
the  binary icosahedral group ${I}$ of order $120$.

The McKay-Slodowy correspondence says that the distinguished pairs $N\lhd G$ are $ D_{n-1} \lhd  D_{2(n-1)}$,
$ C_{2n} \lhd  D_n$, $ C_{2n} \lhd  D_{2n}$, $ T \lhd  O$,
$ D_2 \lhd  T$, $ C_2 \lhd  D_2$ and $V\cong\mathbb{C}^2$,
where the corresponding quiver $\mathcal{R}_{V}({\check{G}})$ realizes
the twisted multiply laced affine Dynkin diagram of types
$A_{2n-1}^{(2)}$, $D_{n+1}^{(2)}$, $A_{2n}^{(2)}$, $E_6^{(2)}$, $D_4^{(3)}$, or $A_2^{(2)}$ respectively,
and the quiver $\mathcal{R}_{V}({\hat{N}})$ realizes
the non-twisted multiply laced affine Dynkin diagram of type
$B_n^{(1)}$, $C_n^{(1)}$, $C_n^{(1)}$, $F_4^{(1)}$, $G_2^{(1)}$, or $A_1^{(1)}$ respectively.

Let $X$ be the adjacency matrix of the McKay quiver, 
then $2I-X^T$ is the corresponding affine Cartan matrix, and
 denominators $(1+t^2)I-t X^T$ in \eqref{ps1} or \eqref{ps2} is called the quantum affine Cartan matrix of the same type \cite{Su},
 as the limit of $t\to 1$ is the former. 



With respect to the simply or multiply laced affine Dynkin diagrams and quantum affine matrices,
the Poincar\'{e} series in ${\rm Section}$ $2.1$ can be given
as follows.

\begin{coro}\label{cor1}
Let $ N\lhd  G\leq \mathrm{SL}_2(\mathbb{C})$ and
$\{\check\rho_i|i\in {\rm\check{I}}\}$ be the set of $ N$-restriction modules.
Let $B_{1}$ be the adjacency matrix of $\mathcal{R}_{V}({\check{G}})$ and $M_1^{i}$ be the
transpose of quantum affine matrix $(1+t^2)I-tB_{1}$ with the $i$th column replaced by
$\underline{\delta}=(1, 0, \ldots, 0)^T\in\mathbb N^{|{\rm \check I}|}$.
Then the  Poincar\'e series of $\check\rho_i$ in $S(V)=\bigoplus\limits_{k \geq 0}S^{k}(V)$ is
\begin{equation*}
   \check{s}^{i}(t)=\frac{\mathrm{det}(M_1^{i})}{\mathrm{det}((1+t^2)I-tB_{1})}
    =\frac{{\mathrm{det}}(M_1^{i})}{\prod\limits_{g\in \Upsilon(N)}(1+t^2 - \chi_{V}(g)t)},
\end{equation*}
where $\Upsilon(N)=N\cap \Upsilon$,
$\Upsilon=\{g\in G|$ $g$ is a set of representatives of conjugacy class of $G\}$.
\end{coro}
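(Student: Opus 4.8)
The plan is to derive Corollary \ref{cor1} as the special case $n=2$ of Theorem \ref{thm1}, translated into the language of the McKay-Slodowy correspondence. First I would set $n=2$ in the general Poincar\'e series formula \eqref{ps1}. Since $\mathrm{SL}_2(\mathbb{C})$ has only one nontrivial exterior power of the standard module, namely $\wedge^1 V = V$ (with $\wedge^2 V \cong \mathbb{C}$ the trivial module via the determinant), the sum $\sum_{r=1}^{n-1}(-1)^r B_{n-r} t^r$ collapses to the single term $-B_1 t$, and the factor $(1+(-1)^n t^n)I$ becomes $(1+t^2)I$. Thus the denominator matrix in Theorem \ref{thm1} is exactly $(1+t^2)I - t B_1$, which is the transpose of the quantum affine Cartan matrix attached to the McKay-Slodowy quiver $\mathcal{R}_V(\check G)$, as recorded in the discussion preceding the corollary. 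The matrix $M_1^i$ of Theorem \ref{thm1} then specializes to $(1+t^2)I - t B_1$ with its $i$th column replaced by $\underline{\delta} = (1,0,\ldots,0)^T$, matching the statement.

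Next I would identify the eigenvalue product in the denominator. Part of the proof of Theorem \ref{thm1} already shows that
\[
\mathrm{det}\left((1+(-1)^n t^n)I + \sum_{r=1}^{n-1}(-1)^r B_{n-r} t^r\right) = \prod_{g\in\Upsilon(N)}\left(1 + \sum_{r=1}^{n-1}(-1)^r \chi_{\wedge^{n-r}V}(g) t^r + (-1)^n t^n\right),
\]
and for $n=2$ the inner factor is $1 - \chi_{\wedge^1 V}(g) t + t^2 = 1 - \chi_V(g) t + t^2$ (using $\wedge^{2-1}V = V$). Here I should note that for $g\in \mathrm{SL}_2(\mathbb{C})$ one has $\chi_V(g) = \overline{\chi_V(g)}$ automatically, since the eigenvalues of $g$ come in inverse pairs $\{\zeta,\zeta^{-1}\}$ with $\zeta$ a root of unity, so $\chi_V(g) = \zeta + \zeta^{-1}$ is real; this explains why no complex conjugate appears in the $n=2$ statement, in contrast with the $\mathrm{SL}_3$ and $\mathrm{SL}_4$ corollaries. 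Plugging this in gives precisely
\[
\check{s}^i(t) = \frac{\mathrm{det}(M_1^i)}{\prod_{g\in\Upsilon(N)}(1 + t^2 - \chi_V(g) t)},
\]
as claimed.

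Finally I would assemble the statement about $\Upsilon(N)$ and $\Upsilon$: these are unchanged from Theorem \ref{thm1}, with $\Upsilon$ a fixed set of $G$-conjugacy class representatives and $\Upsilon(N) = N\cap\Upsilon$, whose cardinality equals $|\check{\mathrm{I}}|$ by Lemma \ref{reinnumber}. I would remark that the appearance of the quantum affine Cartan matrix $(1+t^2)I - tB_1$ and its limit as $t\to 1$ being $2I - B_1$, the affine Cartan matrix of the corresponding twisted type in the McKay-Slodowy list, is exactly the point being emphasized; no new computation is needed beyond the specialization. The only genuinely substantive input is Theorem \ref{thm1}, so there is no real obstacle here — the main thing to be careful about is the bookkeeping $\wedge^{n-r}V$ versus $\wedge^r V$ when $n=2$ (they coincide for $r=1$) and the real-valuedness of $\chi_V$ on $\mathrm{SL}_2(\mathbb{C})$, which is what lets the formula be written without conjugation.
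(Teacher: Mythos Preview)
Your proposal is correct and follows exactly the approach the paper takes: Corollary~\ref{cor1} is stated without proof in the paper and is simply the specialization $n=2$ of Theorem~\ref{thm1}, with the identification of $(1+t^2)I - tB_1$ as the transpose of the quantum affine Cartan matrix made explicit in the preceding discussion. Your additional remark about the real-valuedness of $\chi_V$ on $\mathrm{SL}_2(\mathbb{C})$ is accurate but not strictly needed, since for $n=2$ the factor $\chi_{\wedge^{n-r}V}(g)$ in \eqref{ps1} is already $\chi_{\wedge^1 V}(g)=\chi_V(g)$ with no conjugation appearing.
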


\begin{coro}\label{cor2}
Let $ N\lhd  G\leq \mathrm{SL}_2(\mathbb{C})$ and
$\hat\phi_i$ $(i\in\hat{\mathrm{I}})$ be an induced $ G$-module.
Let $D_{1}$ be the adjacency matrix of $\mathcal{R}_{V}({\hat{N}})$ and $M_2^{i}$ the transpose of
quantum affine matrix $(1+t^2)I-t D_{1}$ with the $i$th column replaced by $\underline{\delta}=(1, 0, \ldots, 0)^T\in\mathbb N^{|{\rm\hat I}|}$.
Then the  Poincar\'e series of $\hat{\phi}_i$ in $S(V)=\bigoplus\limits_{k \geq 0}S^{k}(V)$ is given by
\begin{equation*}
   \hat{s}^{i}(t)=\frac{{\mathrm{det}}(M_2^{i})}{\mathrm{det}((1+t^2)I-tD_{1})}
   =\frac{{\mathrm{det}}(M_2^{i})}{\prod\limits_{g\in \Upsilon(N)}(1+t^2-\chi_{V}(g)t)},
\end{equation*}
where $\Upsilon(N)=N\cap \Upsilon$,
$\Upsilon$ is a set of conjugacy class representatives of $ G$.
\end{coro}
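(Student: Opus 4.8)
The plan is to specialize Theorem \ref{thm2} to the case $n=2$, exactly mirroring the way Corollary \ref{cor1} is obtained from Theorem \ref{thm1}, and then translate the output into the language of the McKay-Slodowy correspondence. First I would set $n=2$ in the general formula \eqref{ps2}: the sum $\sum_{r=1}^{n-1}(-1)^r D_{n-r}t^r$ collapses to the single term $-D_1 t$, and the prefactor $1+(-1)^n t^n$ becomes $1+t^2$. Thus the denominator matrix in \eqref{ps2} is precisely $(1+t^2)I - t D_1$, which — as noted in the paragraph preceding the corollary — is (the transpose of) the quantum affine Cartan matrix of the type attached to the quiver $\mathcal{R}_V(\hat N)$, since $\wedge^{2-r}V$ for $r=1$ is just $\wedge^1 V = V$ and $D_1$ is the adjacency matrix of $\mathcal{R}_V(\hat N)$. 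The numerator matrix $M_2^i$ of Theorem \ref{thm2}, which is this same matrix with its $i$th column replaced by $\underline{\delta}=(1,0,\ldots,0)^T$, then matches the matrix $M_2^i$ named in the statement of the corollary.

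Next I would handle the product formula in the denominator. Theorem \ref{thm2} already gives
\[
\mathrm{det}\!\left((1+(-1)^nt^n)I+\textstyle\sum_{r=1}^{n-1}(-1)^r D_{n-r}t^r\right)
=\prod_{g\in\Upsilon(N)}\!\left(1+\textstyle\sum_{r=1}^{n-1}(-1)^r\chi_{\wedge^{n-r}V}(g)t^r+(-1)^nt^n\right),
\]
so at $n=2$ the right-hand side becomes $\prod_{g\in\Upsilon(N)}\!\left(1-\chi_{\wedge^{1}V}(g)t+t^2\right)=\prod_{g\in\Upsilon(N)}\!\left(1-\chi_V(g)t+t^2\right)$, using $\wedge^1 V = V$. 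Substituting these two identifications — the determinant form and the product form of the denominator — into \eqref{ps2} yields exactly the asserted two expressions for $\hat s^i(t)$. Finally, since $N\unlhd G\leq\mathrm{SL}_2(\mathbb C)$, Lemma \ref{reinnumber} guarantees $|\hat{\mathrm I}|=|\Upsilon(N)|$, so that the matrices are square of the correct size and Cramer's rule applies, and $\Upsilon$ may be taken as a set of conjugacy class representatives of $G$ with $\Upsilon(N)=N\cap\Upsilon$, matching the hypotheses.

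There is essentially no obstacle here: the corollary is a direct specialization, and the only points requiring care are bookkeeping ones — confirming that the single surviving matrix $D_{n-r}$ at $n=2$, $r=1$ is indeed $D_1$, the adjacency matrix of $\mathcal{R}_V(\hat N)$ (rather than $D_{n-1}$ with a different indexing convention), and checking the sign $(-1)^r=-1$ so that the quantum affine Cartan matrix appears with the correct signs. One should also note, as the surrounding text does, that for $\mathrm{SL}_2(\mathbb C)$ the natural module $V$ is self-dual, so there is no distinction between $\chi_V(g)$ and $\overline{\chi_V(g)}$, which is why the $n=2$ formula is cleaner than the $\mathrm{SL}_3$ and $\mathrm{SL}_4$ corollaries above. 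The analogous statement for $\check s^i(t)$ in Corollary \ref{cor1} is proved the same way from Theorem \ref{thm1}, so one could simply say "the proof is identical to that of Corollary \ref{cor1}, using Theorem \ref{thm2} in place of Theorem \ref{thm1}."
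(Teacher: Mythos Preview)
Your proposal is correct and matches the paper's approach exactly: the corollary is stated without proof as an immediate specialization of Theorem \ref{thm2} to $n=2$, with $D_{n-r}t^r$ reducing to $D_1 t$ and $\chi_{\wedge^{n-r}V}$ reducing to $\chi_V$. The bookkeeping checks you outline (signs, indexing, $|\hat{\mathrm I}|=|\Upsilon(N)|$) are precisely the points one must verify, and there is nothing further in the paper.
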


\begin{coro}\label{cor3}
Let $G\leq\mathrm{SL}_2(\mathbb{C})$ and $\{\rho_i|i\in {\rm I}_{G}\}$
be the set of complex irreducible $ G$-modules. Let $A_1$ be the adjacency matrix of
$\mathcal{R}_{V}(G)$ and $M^{i}$ be the quantum affine  matrix $(1+t^2)I- t A_1$
with the $i$th column replaced by $\underline{\delta}=(1, 0, \ldots, 0)^T\in\mathbb N^{|{\rm I}_{G}|}$.
Then the  Poincar\'e series of $\rho_i$ in $S(V)=\bigoplus\limits_{k \geq 0}S^{k}(V)$ is
\begin{eqnarray*}
   s^{i}(t) = \frac{\mathrm{det}(M^{i})}{\mathrm{det}\left((1+t^2)I-t A_1 \right)}
    =\frac{{\mathrm{det}}(M^{i})}{\prod\limits_{g\in \Upsilon}\left(1+t^2-\chi_{V}(g) t \right)},
\end{eqnarray*}
where $\Upsilon$ is a set of conjugacy class representatives of $ G$.
\end{coro}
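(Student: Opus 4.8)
The plan is to deduce the corollary as the case $n=2$, $N=G$ of Theorem \ref{thm3}, after recording two elementary facts about the exterior algebra of the standard $\mathrm{SL}_2(\mathbb{C})$-module $V=\mathbb{C}^2$. First, since $\det g=1$ for $g\in\mathrm{SL}_2(\mathbb{C})$, we have $\wedge^0 V=\wedge^2 V=\mathbb{C}$ (the trivial module) and $\wedge^1 V=V$, so among the adjacency matrices $A_r$ with $1\le r\le n-1$ only $A_1$ occurs, namely the matrix afforded by $V\otimes\rho_j=\bigoplus_{i}a^{(1)}_{ij}\rho_i$. Second, Lemma \ref{tra&char}(1) with $r=n-r=1$ gives $A_1=A_1^{T}$, so the generalized McKay quiver $\mathcal{R}_V(G)$ is an undirected (simply-laced) graph whose adjacency matrix is exactly $A_1$, i.e.\ the classical McKay quiver; in particular no transpose is needed in the description of $M^{i}$.

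With these in hand, I would substitute $n=2$ into formula \eqref{ps3}. The denominator matrix $(1+(-1)^n t^n)I+\sum_{r=1}^{n-1}(-1)^r A_{n-r}t^r$ collapses to $(1+t^2)I-tA_1$, which is the quantum affine Cartan matrix attached to $\mathcal{R}_V(G)$, and $M^{i}$ becomes this matrix with its $i$th column replaced by $\underline\delta$ — exactly the objects named in the statement. For the product form of the denominator, the factor of Theorem \ref{thm3} indexed by $g\in\Upsilon$, namely $1+\sum_{r=1}^{n-1}(-1)^r\chi_{\wedge^{n-r}V}(g)t^r+(-1)^n t^n$, specializes for $n=2$ to $1-\chi_{\wedge^1 V}(g)t+t^2=1-\chi_V(g)t+t^2$. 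Substituting yields the displayed formula verbatim.

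Alternatively, for a self-contained argument I would rerun the proof of Theorem \ref{thm1} in this case: Lemma \ref{eqsym} for $n=2$ is the Grothendieck-ring identity $[S^k(V)]=[V][S^{k-1}(V)]-[S^{k-2}(V)]$; applying $\dim\mathrm{Hom}_G(\rho_i,-)$, multiplying by $t^k$ and summing over $k\ge 0$ gives the matrix equation $\bigl((1+t^2)I-tA_1\bigr)\underline{s}=\underline\delta$ with $\underline{s}=(s^{i}(t))_{i\in{\rm I}_{G}}$, whence Cramer's rule yields the first equality; for the second, Lemma \ref{tra&char}(2) (with $r=1$, $n=2$) says the columns $(\chi_{\rho_i}(g))_{i\in{\rm I}_{G}}$ of the character table are eigenvectors of $2I-A_1$ with eigenvalues $2-\chi_V(g)$, hence eigenvectors of $(1+t^2)I-tA_1$ with eigenvalues $1+t^2-\chi_V(g)t$, and since the character table is invertible (column orthogonality) these exhaust the spectrum, so the determinant equals $\prod_{g\in\Upsilon}(1+t^2-\chi_V(g)t)$. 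There is no serious obstacle: the content lies entirely in Theorem \ref{thm3} and Lemma \ref{tra&char}, and the only care needed is to reconcile the two cosmetically different presentations of $M^{i}$ and of the denominator, using $A_1=A_1^{T}$ to see that the transpose in the quantum affine Cartan matrix terminology is immaterial here.
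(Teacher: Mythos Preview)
Your proposal is correct and takes essentially the same approach as the paper: the corollary is stated without proof precisely because it is the immediate $n=2$, $N=G$ specialization of Theorem~\ref{thm3}, and your observation that $A_1=A_1^{T}$ (from Lemma~\ref{tra&char}(1)) is exactly what justifies dropping the word ``transpose'' present in Corollaries~\ref{cor1}--\ref{cor2} but absent here. Your optional self-contained rerun is also fine but unnecessary for the corollary.
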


Thanks to the close correlation between twisted and non-twisted multiply laced affine Lie algebras,
the Poincar\'{e} series of affine Lie algebras have the following
relations, which is analogous to our results on Poincar\'{e} series for
restriction and induced modules \cite{JWZ}.

\begin{coro}\label{relation}
Let $N\lhd G$ be $ D_{n-1} \lhd  D_{2(n-1)}$,
$ C_{2n} \lhd  D_n$,  $ T \lhd  O$,
$ D_2 \lhd  T$ in $\mathrm{SL}_2(\mathbb{C})$.
Then the Poincar\'e series $\check{s}^{i}(t)$ and $\hat{s}^{i'}(t)$ for $N$-restriction module $\check\rho_i$
and induced $G$-module $\hat\phi_{i'}$ in $S(\mathbb{C}^2)=\bigoplus\limits_{k \geq 0}S^{k}(\mathbb{C}^2)$ have the following proportional relation:
\begin{align*}
\check{s}^{i}(t)=
\left\{\begin{array}{ll}
  \hat{s}^{i'}(t), & \mbox{$i'$ is a long root in $\mathcal{R}_{V}(\hat{N})$}\\
  |G:N|\hat{s}^{i'}(t),
   & \mbox{$i'$ is a short  root in $\mathcal{R}_{V}(\hat{N})$}
\end{array}\right.,
\end{align*}
where $\check\rho_i\leftrightarrow \hat\phi_{i'}$ is a bijective map between ${\rm\check{I}}$ and ${\rm\hat{I}}$.
In addition, let $N\lhd G$ be $ C_{2n} \lhd  D_{2n}$ $(n\geq 2)$, $ C_2 \lhd  D_2$,
then
\begin{align*}
\check{s}^{i}(t)=
\left\{\begin{array}{ll}
  \hat{s}^{0}(t), & \hbox{\emph{i} is the affine vertex of $\mathcal{R}_{V}(\check{G})$ corresponding to the trivial module}   \\
  2\hat{s}^{i'}(t), & \hbox{\emph{i}\ (resp. $i'$) in the finite Dynkin diagram of $\mathcal{R}_{V}(\check{G})$ (resp. $\mathcal{R}_{V}(\hat{N})$)}
\end{array}\right..
\end{align*}
\end{coro}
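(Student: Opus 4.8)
\noindent The plan is to reduce the two-case identity to a single scalar relation between the Poincar\'e series, following the strategy of \cite{JWZ}. First I would observe that, by Corollaries~\ref{cor1} and \ref{cor2}, the series $\check s^{i}(t)$ and $\hat s^{i'}(t)$ share the \emph{same} denominator $\prod_{g\in\Upsilon(N)}(1+t^{2}-\chi_{V}(g)t)$; hence the assertion is equivalent to the polynomial identity $\det(M_{1}^{i})=c\,\det(M_{2}^{i'})$ with $c$ the claimed constant, and it suffices to produce a relation $\check s^{i}(t)=c\,\hat s^{i'}(t)$ directly. To this end I would write $\hat\phi_{i'}=\mathrm{Ind}_{N}^{G}\phi$ for the appropriate irreducible $N$-module $\phi$ and use that $S^{k}(V)$ is a genuine $G$-module with $\mathrm{Res}_{N}S^{k}(V)=S^{k}(\check V)$. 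Frobenius reciprocity then gives $\hat s^{i'}_{k}=\dim\mathrm{Hom}_{G}(\mathrm{Ind}_{N}^{G}\phi,S^{k}(V))=\dim\mathrm{Hom}_{N}(\phi,S^{k}(\check V))$, while $\check s^{i}_{k}=\dim\mathrm{Hom}_{N}(\mathrm{Res}_{N}\rho_{i},S^{k}(\check V))$ by definition.

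Next I would invoke Clifford theory for $N\unlhd G$: one has $\mathrm{Res}_{N}\rho_{i}=e_{i}\bigl(\phi^{(1)}\oplus\cdots\oplus\phi^{(t_{i})}\bigr)$, where $\phi^{(1)},\dots,\phi^{(t_{i})}$ form a single $G$-conjugacy orbit of irreducible $N$-modules ($t_{i}=|G:T_{i}|$ for the inertia subgroup $T_{i}$), $e_{i}$ is the ramification index, and under the McKay--Slodowy bijection of \cite{JWZ} one may take $\phi=\phi^{(1)}$. Since $S^{k}(\check V)$ extends to a $G$-module, conjugating by any $g\in G$ carrying $\phi^{(1)}$ to $\phi^{(j)}$ shows that $\dim\mathrm{Hom}_{N}(\phi^{(j)},S^{k}(\check V))=\dim\mathrm{Hom}_{N}(\phi^{(1)},S^{k}(\check V))=\hat s^{i'}_{k}$ for every $j$; summing over the orbit gives $\check s^{i}_{k}=e_{i}t_{i}\,\hat s^{i'}_{k}$ for all $k$, hence $\check s^{i}(t)=e_{i}t_{i}\,\hat s^{i'}(t)$ with $e_{i}t_{i}=\dim\rho_{i}/\dim\phi^{(1)}=|G:N|\cdot\dim\rho_{i}/\dim\hat\phi_{i'}$. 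The resulting identity $\det(M_{1}^{i})=e_{i}t_{i}\det(M_{2}^{i'})$ could alternatively be read off from the duality between the quantum affine Cartan matrices $(1+t^{2})I-tB_{1}$ and $(1+t^{2})I-tD_{1}$ up to a diagonal conjugation, but the representation-theoretic route has the advantage of producing the constant $e_{i}t_{i}$ directly.

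It then remains to match $e_{i}t_{i}$ against the long/short dichotomy. For the dual pairs $D_{n-1}\lhd D_{2(n-1)}$, $C_{2n}\lhd D_{n}$, $T\lhd O$ and $D_{2}\lhd T$, the quiver $\mathcal{R}_{V}(\check G)$ has Cartan matrix transpose to that of $\mathcal{R}_{V}(\hat N)$ and the bijection $\check\rho_{i}\leftrightarrow\hat\phi_{i'}$ is vertex-for-vertex; here I would verify, using the McKay--Slodowy tables of \cite{JWZ}, that $e_{i}t_{i}=1$ exactly when $\mathrm{Res}_{N}\rho_{i}$ is $N$-irreducible, which is precisely the case of a long root of $\mathcal{R}_{V}(\hat N)$, and that otherwise $\mathrm{Res}_{N}\rho_{i}$ splits unramified into its full orbit of $|G:N|$ conjugates, so $e_{i}t_{i}=|G:N|$, the short-root case. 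For the remaining pairs $C_{2n}\lhd D_{2n}$ and $C_{2}\lhd D_{2}$, where $\mathcal{R}_{V}(\check G)$ has type $A_{2n}^{(2)}$ and is \emph{not} dual to $\mathcal{R}_{V}(\hat N)\cong C_{n}^{(1)}$, the bijection between $\check{I}$ and $\hat{I}$ must be fixed by hand: the affine vertex of $\mathcal{R}_{V}(\check G)$ carries the trivial $N$-module, so $e_{i}t_{i}=1$ and $\check s^{i}(t)=\hat s^{0}(t)$ there, while every vertex of the underlying finite subdiagram corresponds to a $\check\rho_{i}$ with $e_{i}t_{i}=2$, giving $\check s^{i}(t)=2\hat s^{i'}(t)$. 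Carrying out this final paragraph rigorously — extracting $e_{i}$, $t_{i}$, and the precise bijection from the character tables of the binary polyhedral groups and from \cite{JWZ}, and in particular pinning down the hand-made matching in the non-dual $A_{2n}^{(2)}$ cases — is the step I expect to carry the whole argument; everything preceding it is formal.
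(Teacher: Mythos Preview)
The paper does not supply its own proof of this corollary; it simply remarks that the relations are ``analogous to our results on Poincar\'e series for restriction and induced modules \cite{JWZ}'' and leaves it at that. Your argument is therefore not being compared against an explicit proof in the present paper but against whatever the analogous argument in \cite{JWZ} (for the tensor algebra) is.

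That said, your approach is correct and is almost certainly what the authors have in mind. The key step---using Frobenius reciprocity to rewrite $\hat s^{i'}_{k}$ as $\dim\mathrm{Hom}_{N}(\phi,S^{k}(\check V))$, then using Clifford theory together with the fact that $S^{k}(V)$ is already a $G$-module to see that every $G$-conjugate $\phi^{(j)}$ contributes the same multiplicity---is exactly the mechanism that converts the problem into identifying the single integer $e_{i}t_{i}$. Your formula $e_{i}t_{i}=\dim\rho_{i}/\dim\phi^{(1)}$ is correct and makes the final case check against the long/short dichotomy a matter of reading off dimensions from the character tables (for all the listed pairs one has $e_{i}=1$, so $e_{i}t_{i}\in\{1,|G{:}N|\}$ according as the restriction stays irreducible or not). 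The alternative you mention---comparing $\det(M_{1}^{i})$ and $\det(M_{2}^{i'})$ via a diagonal conjugation between the transposed quantum Cartan matrices---is closer in spirit to how the paper sets up Corollaries~\ref{cor1} and \ref{cor2}, but your representation-theoretic route is cleaner and explains \emph{why} the constant is what it is rather than merely verifying it. The only substantive work left, as you say, is pinning down the bijection and the values of $e_{i}t_{i}$ in the non-dual $A_{2n}^{(2)}$ cases, and that is a finite check against \cite{JWZ}.
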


\section{Poincar\'e series of symmetric invariants for $\mathrm{SL}_2(\mathbb{C})$}

Let $ N \unlhd  G$ be a pair of finite subgroups of $\mathrm{SL}_2(\mathbb{C})$.
By removing the affine vertex corresponding to the trivial module,
the McKay quivers are the multiply laced Dynkin diagrams corresponding to finite dimensional simple Lie algebras.
Thus we denote the quantum (finite dimensional) Cartan matrices of the non-simply laced Dynkin diagrams
of $\mathcal{R}_V ({\check{G}})$ and $\mathcal{R}_{V}({\hat{N}})$ by $(1+t^2)I-t \widetilde{B}_{1}^{T}$ and $(1+t^2)I-t \widetilde{D}_{1}^{T}$ respectively.

For $ N \lhd  G\leq\mathrm{SL}_2(\mathbb{C})$, the Poincar\'{e} series of
 $ N$-invariants and $ G$-invariants in the symmetric algebra $S(\mathbb{C}^2)$ are
$\check{s}^{0}(t)$ and $\hat{s}^{0}(t)$, then $\check{s}^{0}(t)$ and $\hat{s}^{0}(t)$
are the quotient of quantum finite Cartan matrix by the quantum affine Cartan matrix for non-simply laced Lie algebras.

\begin{theo}\label{inv1}
Let $ N \unlhd  G\leq \mathrm{SL}_2(\mathbb{C})$.
Let $(1+t^2)I-tB_{1}$ (resp. $(1+t^2)I-tD_{1}$) be the transpose of quantum affine Cartan matrix of $\mathcal{R}_V({\check{G}})$
(reps. $\mathcal{R}_{V}({\hat{N}})$), and $(1+t^2)I-t \widetilde{B}_{1}$ (resp. $(1+t^2)I-t \widetilde{D}_{1}$)
the corresponding quantum Cartan matrix.
Then the Poincar\'{e} series of the $ N$-invariants $S(V)^{ N}$ and $ G$-invariants $S(V)^{ G}$
in $S(V)=\bigoplus\limits_{k \geq 0}S^{k}(V)$ are
\begin{align}\label{inv11}
   \check{\textmd{s}}^{0}(t)=\hat{\textmd{s}}^{0}(t)
   &=\frac{\mathrm{det}\left((1+t^2)I-t\widetilde{B}_{1}\right)}{\mathrm{det}((1+t^2)I-tB_{1})}
   =\frac{\mathrm{det}\left((1+t^2)I-t\widetilde{D}_{1}\right)}
   {\mathrm{det}((1+t^2)I-tD_{1})} \nonumber \\
   &=\frac{\mathrm{det}\left((1+t^2)I-t \widetilde{B}_{1}\right)}
   {\prod\limits_{g\in \Upsilon(N)}(1+t^2-\chi_{V}(g)t)},
\end{align}
where $\Upsilon(N)=\Upsilon\cap N$ and $\Upsilon$ is
a fixed set of representatives of conjugacy class of $G$.
\end{theo}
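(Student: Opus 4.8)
The plan is to reduce everything to Corollary \ref{cor1} and Corollary \ref{cor2} (equivalently the $n=2$ case of Theorems \ref{thm1} and \ref{thm2}) applied to the trivial module and then identify the numerators. First I would specialize $n=2$: for $N\lhd G\leq\mathrm{SL}_2(\mathbb C)$ the standard module is $V\cong\mathbb C^2$, the only nontrivial exterior power is $\wedge^1V=V$, and $\wedge^2V=\mathbb C$ is trivial, so the denominators in \eqref{ps1} and \eqref{ps2} collapse to $\det((1+t^2)I-tB_1)$ and $\det((1+t^2)I-tD_1)$ respectively, and by Lemma \ref{tra&char1} these both equal $\prod_{g\in\Upsilon(N)}(1+t^2-\chi_V(g)t)$ (note $\chi_V(g)=\overline{\chi_V(g)}$ because $V$ is self-dual for $\mathrm{SL}_2$, so no conjugation is needed). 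This immediately gives the equality of the three denominators, hence the last two equalities in \eqref{inv11} once the numerators are shown equal, and also explains why $\check s^0(t)=\hat s^0(t)$ will follow from matching numerators.

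Next I would compute the numerator $\det(M_1^0)$ for $i=0$, the index of the trivial $N$-module (which is its own restriction, $\check\rho_0=\phi_0|_N$ trivial). Here $M_1^0$ is $(1+t^2)I-tB_1$ with its $0$th column replaced by $\underline\delta=(1,0,\ldots,0)^T$. Expanding the determinant along that column gives $\det(M_1^0)=\det\big(((1+t^2)I-tB_1)_{\widehat{0}}\big)$, the principal minor obtained by deleting the row and column indexed by the affine (trivial-module) vertex $0$ of the McKay–Slodowy quiver $\mathcal R_V(\check G)$. But deleting the affine vertex from an affine Dynkin diagram produces exactly the corresponding finite Dynkin diagram, and the surviving matrix is precisely $(1+t^2)I-t\widetilde B_1$, the transpose of the quantum finite Cartan matrix of $\mathcal R_V(\check G)$ — this is the definition given just before the theorem. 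Hence $\det(M_1^0)=\det((1+t^2)I-t\widetilde B_1)$. The identical argument with $D_1$, $\widetilde D_1$, and the affine vertex $0$ of $\mathcal R_V(\hat N)$ gives $\det(M_2^0)=\det((1+t^2)I-t\widetilde D_1)$. Combining with the denominator identification from the previous paragraph yields all the displayed equalities.

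The one genuine subtlety — and the step I expect to be the main obstacle — is justifying that deleting the $0$th column-and-row from the (transposed) quantum affine Cartan matrix really yields the (transposed) quantum finite Cartan matrix \emph{as stated}, i.e. that the affine vertex $0$ in both $\mathcal R_V(\check G)$ and $\mathcal R_V(\hat N)$ is the vertex corresponding to the trivial module, and that in the twisted case $\mathcal R_V(\check G)$ the relevant affine vertex (the one attached to the trivial $N$-restriction) is the one whose removal leaves the finite diagram whose quantum Cartan matrix is called $\widetilde B_1$. This is exactly the content of the McKay–Slodowy dictionary recalled in Section 2.3 (the trivial module sits at the affine node of every listed affine diagram), so I would invoke that classification: for each of the seven distinguished pairs one checks from the explicit diagram ($A_{2n-1}^{(2)}$, $D_{n+1}^{(2)}$, $A_{2n}^{(2)}$, $E_6^{(2)}$, $D_4^{(3)}$, $A_2^{(2)}$ on the $\mathcal R_V(\check G)$ side and $B_n^{(1)}$, $C_n^{(1)}$, $F_4^{(1)}$, $G_2^{(1)}$, $A_1^{(1)}$ on the $\mathcal R_V(\hat N)$ side) that removing the trivial-module node gives the asserted finite diagram. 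Everything else is Cramer's rule and a cofactor expansion, both already deployed in the proof of Theorem \ref{thm1}, so no new computation is needed beyond this bookkeeping.
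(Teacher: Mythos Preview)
Your approach is the natural one and matches what the paper implicitly intends (the paper states Theorem~\ref{inv1} without proof, as an immediate consequence of Corollaries~\ref{cor1} and~\ref{cor2}; your cofactor-expansion argument identifying $\det(M_1^0)$ with the quantum finite Cartan determinant is exactly the point, and the paper explicitly records just before the theorem that removing the affine vertex corresponding to the trivial module yields the finite Dynkin diagram, so your ``main obstacle'' is already handled by the setup).

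There is, however, one genuine gap. You establish
\[
\check s^{\,0}(t)=\frac{\det\bigl((1+t^2)I-t\widetilde B_1\bigr)}{\det\bigl((1+t^2)I-tB_1\bigr)},\qquad
\hat s^{\,0}(t)=\frac{\det\bigl((1+t^2)I-t\widetilde D_1\bigr)}{\det\bigl((1+t^2)I-tD_1\bigr)},
\]
and you correctly note that the two denominators coincide. But you never show that the two \emph{numerators} coincide, nor do you give an independent reason for $\check s^{\,0}(t)=\hat s^{\,0}(t)$; the sentence ``combining \ldots\ yields all the displayed equalities'' overshoots. The finite diagrams obtained from $\mathcal R_V(\check G)$ and $\mathcal R_V(\hat N)$ are in general different (e.g.\ $C_n$ versus $B_n$), so $\widetilde B_1\neq\widetilde D_1$ and the equality of their determinants is not automatic from what you wrote. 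The cleanest fix is a one-line appeal to Frobenius reciprocity:
\[
\hat s^{\,0}_k=\dim\mathrm{Hom}_G\bigl(\mathrm{Ind}_N^G\mathbf 1,\,S^k(V)\bigr)=\dim\mathrm{Hom}_N\bigl(\mathbf 1,\,S^k(V)\bigr)=\check s^{\,0}_k,
\]
which gives $\check s^{\,0}(t)=\hat s^{\,0}(t)$ directly and then forces $\det((1+t^2)I-t\widetilde B_1)=\det((1+t^2)I-t\widetilde D_1)$ a posteriori. Alternatively one can observe from the McKay--Slodowy tables that the two finite Cartan matrices are mutual transposes (dual root systems), hence have equal determinants; but the Frobenius argument is cleaner and type-independent. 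Add either of these and your proof is complete.
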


Similarly, we have also the quantum finite Cartan matrix $(1+t^2)I- t \widetilde{A}_1$ for
 a finite simply laced Lie algebra, where $\widetilde{A}_1$ is the adjacency matrix of the simply laced Dynkin diagram
 corresponding to $\mathcal{R}_{V}(G)$ for $N=G\leq\mathrm{SL}_2$.

\begin{theo}\label{inv2}
Let $G\leq\mathrm{SL}_2(\mathbb{C})$.
Let $(1+t^2)I- t A_1$ be the quantum affine Cartan matrix of $\mathcal{R}_{V}(G)$,
and $(1+t^2)I- t \widetilde{A}_1$ be the corresponding quantum finite Cartan matrix.
Then the Poincar\'e series of $G$-invariants $S(V)^{G}$ inside $S(V)=\bigoplus\limits_{k \geq 0}S^{k}(V)$ is
\begin{eqnarray}\label{inv22}
   s^{0}(t) = \frac{\mathrm{det}\left((1+t^2)I- t \widetilde{A}_1 \right)}{\mathrm{det}\left((1+t^2)I- t A_1 \right)}
    =\frac{\mathrm{det}\left((1+t^2)I- t \widetilde{A}_1 \right)}{\prod\limits_{g\in \Upsilon}\left(1+t^2-\chi_{V}(g) t \right)},
\end{eqnarray}
where $\Upsilon=\{g\in G|$ $g$ is a representative of conjugacy class of $G\}$.
\end{theo}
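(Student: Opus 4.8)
The plan is to obtain \eqref{inv22} as the $i=0$ specialization of Corollary \ref{cor3} (equivalently of Theorem \ref{thm3} with $n=2$), followed by a single cofactor expansion that identifies the numerator with the quantum finite Cartan determinant. For $G\leq\mathrm{SL}_2(\mathbb{C})$ acting on $V=\mathbb{C}^2$ one has $\wedge^1 V=V$ and $\wedge^2 V=\mathbb C$ trivial, so the general formula of Theorem \ref{thm3} collapses for $n=2$ to
\[
s^i(t)=\frac{\det(M^i)}{\det\!\big((1+t^2)I-tA_1\big)}=\frac{\det(M^i)}{\prod_{g\in\Upsilon}\big(1+t^2-\chi_V(g)t\big)},
\]
with $M^i$ obtained from the quantum affine Cartan matrix $(1+t^2)I-tA_1$ by replacing its $i$th column with $\underline{\delta}=(1,0,\dots,0)^T$; this is precisely Corollary \ref{cor3}. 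Taking $i=0$, the index of the trivial module $\rho_0$, and using $\mathrm{Hom}_G(\rho_0,S^k(V))=S^k(V)^G$, the left-hand side becomes $\sum_{k\ge0}\dim S^k(V)^G\,t^k$, the Poincar\'e series of $S(V)^G$. Thus everything reduces to the identity $\det(M^0)=\det\!\big((1+t^2)I-t\widetilde{A}_1\big)$.

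To prove this identity I would expand $\det(M^0)$ along its $0$th column. That column equals $\underline{\delta}$, so only the $(0,0)$-cofactor survives and $\det(M^0)=\det(C)$, where $C$ is the matrix obtained from $(1+t^2)I-tA_1$ by deleting the row and the column indexed by $0$. By the McKay correspondence recalled in Section~$2.3$, the quiver $\mathcal{R}_V(G)$ is the simply laced affine Dynkin diagram whose distinguished affine node is exactly the vertex $0$ attached to the trivial representation; erasing that node produces the corresponding finite Dynkin diagram, whose adjacency matrix is by definition $\widetilde{A}_1$. Hence $C=(1+t^2)I-t\widetilde{A}_1$, with $I$ now the identity of the reduced size, and therefore $\det(M^0)=\det\!\big((1+t^2)I-t\widetilde{A}_1\big)$. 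Substituting back into the specialized Corollary \ref{cor3} yields both equalities in \eqref{inv22}.

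The step that demands the most care, though it is bookkeeping rather than a genuine obstacle, is the index matching: one must verify that the row and column removed in the cofactor expansion are both the affine node $0$ (which is forced by the choice of $\underline{\delta}$ in Section~$2.1$) and that $\widetilde{A}_1$ is genuinely $A_1$ with that row and column deleted (which is the classical McKay correspondence). Since $V\cong V^{*}$ for $G\leq\mathrm{SL}_2(\mathbb{C})$, the matrix $A_1$ is symmetric, so no ambiguity between $A_1$ and $A_1^{T}$ arises; the very same argument, applied to the pair $N\unlhd G$ via Corollary \ref{cor1} and Corollary \ref{cor2} with $A_1$ replaced by $B_1$, respectively $D_1$, and $\widetilde{A}_1$ by $\widetilde{B}_1$, respectively $\widetilde{D}_1$, furnishes Theorem \ref{inv1}.
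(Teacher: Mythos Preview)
Your proposal is correct and follows essentially the same route the paper (implicitly) takes: Theorem~\ref{inv2} is the $i=0$ case of Corollary~\ref{cor3}, and the identification $\det(M^0)=\det\!\big((1+t^2)I-t\widetilde{A}_1\big)$ comes from expanding along the replaced column and recognizing the remaining principal minor as the quantum finite Cartan determinant, exactly as set up in the paragraph preceding Theorem~\ref{inv1}. Your remarks on the symmetry of $A_1$ (via $V\cong V^*$, i.e.\ Lemma~\ref{tra&char}(1) with $n=2$) and on the parallel derivation of Theorem~\ref{inv1} are accurate and match the paper's organization.
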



As application of the Poincar\'e series of symmetric invariants, we now consider
their relation with the finite and affine Coxeter transformation. Tensor invariants
and Poincar\'e series have been shown to realize the exponents of simply laced affine Lie algebras
by Benkart \cite{Ben}, and we have generalized her result to all types of affine Dynkin diagrams except $A_{2n}^{(1)}$ in
\cite{JWZ}. In the following, we will study the connection between the Poincar\'e series of symmetric invariants
and show that they also provide a good setting to give rise to all eigenvalues of the
affine and finite Coxeter transformations.

\ \ \ \ \ \ \ \ \ \ \ \ \ \ \ \ \ \
$\textbf{Table 1}$\ \ \ \ \ \ \ \ \ Exponents and Coxeter number.

\noindent\begin{tabular}{l@{}c@{}c@{}}
  \hline
  Dynkin diagrams & Exponents & Coxeter number   \\
  \hline
  $A_n$  & $1,2,3,\ldots,n$ & $n+1$  \\
  $B_n$  & $1,3,5,\ldots,2n-1$ & $2n$  \\
  $C_n$ & $1,3,5,\ldots,2n-1$ & $2n$   \\
  $D_n$ & $1,3,5,\ldots,2n-3,n-1$ & $2n-2$  \\
  $E_6$   & $1,4,5,7,8,11$  & $12$  \\
  $E_7$   &  $1,5,7,9,11,13,17$ &  $18$  \\
  $E_8$   & $1,7,11,13,17,19,23,29$  &  $30$ \\
  $F_4$ & $1,5,7,11$ & $12$   \\
  $G_2$ & $1,5$ & $6$  \\
  $A_{1}^{(1)}$ & $0,1$  & $1$  \\
  $A_{2\ell+1}^{(1)}$   &  $0,1,1,\ldots,\ell,\ell,\ell+1$ &  $\ell+1$ \\
  $D_{2\ell+1}^{(1)}$   & $0,2,\ldots,2\ell-2,2\ell-1,2\ell-1,2\ell,
  \ldots,2(2\ell-1)$  &  $2(2\ell-1)$ \\
  $D_{2\ell}^{(1)}$   &  $0,\ldots,\ell-2,\ell-1,\ell-1,\ell-1,\ell,\ldots,
  2\ell-2$ &  $2\ell-2$ \\
  $E_6^{(1)}$   &  $0,2,2,3,4,4,6$  &  $6$ \\
  $E_7^{(1)}$   &  $0,3,4,6,6,8,9,12$ &  $12$ \\
  $E_8^{(1)}$   &  $0,6,10,12,15,18,20,24,30$ & $30$  \\
  $A_{2}^{(2)}$ & $0,2$  & $2$  \\
  $A_{2\ell}^{(2)}$ & $0,1,\ldots,\ell$  & $\ell$ \\
  $B_{2\ell+1}^{(1)}$, $A_{4\ell+1}^{(2)}$ & $0,1,\ldots,\ell-1,\ell,\ell,\ell+1,\ldots,2\ell$ & $2\ell$  \\
  $B_{2\ell}^{(1)}$, $A_{4\ell-1}^{(2)}$ & $0,2,\ldots,2\ell-2,2\ell-1,2\ell,\ldots,2(2\ell-1)$ & $2(2\ell-1)$  \\
  $C_\ell^{(1)}$, $D_{\ell+1}^{(2)}$ & $0,1,\ldots,\ell$ & $\ell$  \\
  $F_4^{(1)}$, $E_6^{(2)}$ & $0,2,3,4,6$ & $6$  \\
  $G_2^{(1)}$, $D_4^{(3)}$ & $0,1,2$ & $2$  \\
  \hline
\end{tabular}

\ \ \ \ \ \ \ \ \

\begin{theo}\label{affps}
Let {\rm $N \unlhd G\leq\mathrm{SL}_2(\mathbb{C})$} and $(1+t^2)I-tB_{1} $ (resp. $(1+t^2)I-t \widetilde{B}_{1}$)
be the transpose of quantum affine (resp. finite) Cartan matrix of $\mathcal{R}_{V}({\check{G}})$ (resp. corresponding finite Dynkin diagram).
Let $\Delta$ (resp. $\widetilde{\Delta}$) be the set of exponents $m_i$ (resp. $\widetilde{m}_i$) of the affine (resp. finite
simple) Lie algebra  associated with the Dynkin diagram
and $h$ (resp. $\widetilde{h}$) the affine (resp. finite) Coxeter number.
Then the Poincar\'{e} series for the $ N$-invariants and $ G$-invariants in
$S(V)=\bigoplus\limits_{k \geq 0}S^{k}(V)$ is
\begin{align*}
  \check{\textmd{s}}^{0}(t)=\hat{\textmd{s}}^{0}(t)
   &=\frac{\mathrm{det}\left((1+t^2)I-t\widetilde{B}_{1}\right)}{\mathrm{det}((1+t^2)I-tB_{1})}
   =\frac{\mathrm{det}\left((1+t^2)I-t\widetilde{B}_{1}\right)}
   {\prod\limits_{g\in \Upsilon(N)}(1+t^2-\chi_{V}(g)t)} \\
   &=\frac{\prod\limits_{\widetilde{m}_i\in \widetilde{\Delta}}\left(1+t^2-2\cos\left(\frac{\widetilde{m}_i \pi}{\widetilde{h}}\right)t\right)}
   {\prod\limits_{m_i\in \Delta}\left(1+t^2-2\cos\left(\frac{m_i\pi }{h}\right)t\right)},
\end{align*}
where $\Upsilon(N)=N\cap \Upsilon$, and $\Upsilon$ is a set of conjugacy class representatives of $ G$.
\end{theo}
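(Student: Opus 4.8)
The plan is to read off the closed form directly from Theorem~\ref{inv1}. That theorem already gives $\check{s}^{0}(t)=\hat{s}^{0}(t)=\det\big((1+t^2)I-t\widetilde B_1\big)/\det\big((1+t^2)I-tB_1\big)$, together with the identification $\det\big((1+t^2)I-tB_1\big)=\prod_{g\in\Upsilon(N)}(1+t^2-\chi_V(g)t)$ (which comes from Lemma~\ref{tra&char1} applied with $n=2$, $r=1$: the columns $(\chi_{\check\rho_i}(g))_{i}$ of the character table are eigenvectors of $2I-B_1^{T}$ with eigenvalues $2-\chi_V(g)$, so the eigenvalues of $B_1$ are exactly the numbers $\chi_V(g)$, $g\in\Upsilon(N)$). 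Hence the single remaining point is to recognise the numerator and denominator as the claimed products over exponents, i.e. to prove
\[
\det\big((1+t^2)I-tB_1\big)=\prod_{m_i\in\Delta}\!\big(1+t^2-2\cos(m_i\pi/h)\,t\big)
\qquad\text{and}\qquad
\det\big((1+t^2)I-t\widetilde B_1\big)=\prod_{\widetilde m_i\in\widetilde\Delta}\!\big(1+t^2-2\cos(\widetilde m_i\pi/\widetilde h)\,t\big).
\]

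The first reduction I would make is the formal identity $\det\big((1+t^2)I-tA\big)=t^{m}\det\big((t^{-1}+t)I-A\big)=\prod_{j=1}^{m}\big(1+t^2-\lambda_j t\big)$, valid for any $m\times m$ matrix $A$ with eigenvalues $\lambda_1,\dots,\lambda_m$ counted with multiplicity, since the characteristic polynomial always splits over $\mathbb{C}$. Applying this with $A=B_1$ and with $A=\widetilde B_1$, the theorem becomes equivalent to the spectral assertion that the eigenvalues of $B_1$ are exactly $\{2\cos(m_i\pi/h):m_i\in\Delta\}$ and those of $\widetilde B_1$ are exactly $\{2\cos(\widetilde m_i\pi/\widetilde h):\widetilde m_i\in\widetilde\Delta\}$. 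Both matrices are symmetrizable, being the transposes of $2I$ minus an affine, resp.\ a finite, Cartan matrix, so their spectra are automatically real, which is consistent with the $2\cos$ form.

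For the finite Dynkin diagram obtained by deleting the affine vertex of $\mathcal{R}_V(\check G)$, this spectral assertion is the classical description---due essentially to Coxeter and to Kostant---of the spectrum of a (possibly non-simply-laced) Cartan matrix in terms of the exponents and the Coxeter number: the eigenvalues $e^{2\pi i \widetilde m_i/\widetilde h}$ of the Coxeter transformation match, because every finite Dynkin diagram is a tree and hence bipartite, the eigenvalues $2\cos(\widetilde m_i\pi/\widetilde h)$ of the adjacency matrix $\widetilde B_1$. For the affine diagram $\mathcal{R}_V(\check G)$ itself, the corresponding statement---that the affine Cartan matrix has spectrum $\{2-2\cos(m_i\pi/h)\}$, with the $m_i$ and $h$ as recorded in Table~1---is precisely the spectral input already used by Benkart \cite{Ben} and in \cite{JWZ} to realise exponents of affine Lie algebras via tensor invariants; the sole exception there, type $A_{2n}^{(1)}$, does not occur among the affine Dynkin diagrams $\mathcal{R}_V(\check G)$ attached to the McKay--Slodowy pairs under consideration, so all relevant types are covered. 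Alternatively, since the list of pairs of types is short---$A_{2n-1}^{(2)}/B_n$, $D_{n+1}^{(2)}/C_n$, $A_{2n}^{(2)}/C_n$, $E_6^{(2)}/F_4$, $D_4^{(3)}/G_2$ and $A_2^{(2)}/A_1$---one may simply verify the two factorizations case by case from the explicit Cartan matrices, which also makes visible how the eigenvalues of the affine Coxeter transformation sit among the roots $t=e^{\pm i m_i\pi/h}$ of the denominator.

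Granting the spectral description, substituting $\lambda_j=2\cos(m_j\pi/h)$ (resp.\ $\lambda_j=2\cos(\widetilde m_j\pi/\widetilde h)$) into the formal determinant identity above, and feeding the result back into Theorem~\ref{inv1}, yields the displayed chain of equalities. I expect the one genuine difficulty to be the uniform justification of the spectral statement across the mixed list of twisted affine and non-simply-laced finite types; everything else is the bookkeeping above together with the already-established Theorem~\ref{inv1}.
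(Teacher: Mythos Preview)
Your proposal is correct and follows essentially the same route as the paper, which gives no standalone argument but simply remarks that the proof is ``similar to our results in \cite[Theorem~4.5]{JWZ}''; you have in fact spelled out the reduction (via Theorem~\ref{inv1} and the identity $\det((1+t^2)I-tA)=\prod_j(1+t^2-\lambda_jt)$) to the classical spectral description of finite and affine Cartan matrices more explicitly than the paper does. One cosmetic slip: in your case list the finite types attached to $A_{2n-1}^{(2)}$ and $D_{n+1}^{(2)}$ should be $C_n$ and $B_n$ respectively (cf.\ Sections~4.1--4.2), not the other way around, though since $B_n$ and $C_n$ share exponents and Coxeter number this does not affect the factorization.
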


\begin{theo}\label{finps}
Let $G\leq\mathrm{SL}_2(\mathbb{C})$ without $G\cong{C}_n$ for $n$ is odd,
and let $(1+t^2)I- A_1$ (resp. $(1+t^2)I- \widetilde{A}_1$) be quantum affine (resp. finite) Cartan matrix of $\mathcal{R}_{V}(G)$
(resp. corresponding finite Dynkin diagram).
Let $\Delta$ (resp. $\widetilde{\Delta}$) be the set of exponents $m_i$ (resp. $\widetilde{m}_i$)
of the simply laced affine (resp. finite) Lie algebra and $h$ (resp. $\widetilde{h}$) the affine (resp. finite) Coxeter number.
Then the  Poincar\'e series of $G$-invariants $S(V)^{G}$ in $S(V)=\bigoplus\limits_{k \geq 0}S^{k}(V)$ is
\begin{align}
   s^{0}(t) &= \frac{\mathrm{det}\left((1+t^2)I- \widetilde{A}_1 t\right)}{\mathrm{det}\left((1+t^2)I- A_1 t\right)}
    =\frac{\mathrm{det}\left((1+t^2)I- \widetilde{A}_1 t\right)}{\prod\limits_{g\in \Upsilon}\left(1+t^2-\chi_{V}(g) t \right)}  \nonumber\\
    &=\frac{\prod\limits_{\widetilde{m}_i\in \widetilde{\Delta}}\left(1+t^2-2\cos\left(\frac{\widetilde{m}_i \pi}{\widetilde{h}}\right)t\right)}
   {\prod\limits_{m_i\in \Delta}\left(1+t^2-2\cos\left(\frac{m_i\pi }{h}\right)t\right)}, \label{exponent}
\end{align}
where $\Upsilon=\{g\in G|$ $g$ is a representative of conjugacy class of $G\}$.
\end{theo}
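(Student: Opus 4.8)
\textbf{Proof proposal for Theorem \ref{finps}.}

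The plan is to build on Theorem \ref{inv2}, which already identifies $s^0(t)$ with the quotient $\det\left((1+t^2)I-t\widetilde{A}_1\right)/\det\left((1+t^2)I-tA_1\right)$, so that the only thing left to prove is the last equality in \eqref{exponent}: that each of these two determinants factors as $\prod_j\left(1+t^2-2\cos(m_j\pi/h)t\right)$, where the $m_j$ run over the exponents of the associated (affine or finite) Lie algebra and $h$ is the corresponding Coxeter number. First I would recall that $A_1$ is the adjacency matrix of the simply-laced affine Dynkin diagram $\mathcal{R}_V(G)$, hence $A_1=A_1^T$ and $2I-A_1$ is the (symmetric) affine Cartan matrix; likewise $2I-\widetilde{A}_1$ is the finite Cartan matrix of the corresponding finite-type diagram. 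Then $\det\left((1+t^2)I-tA_1\right)=t^{\ell+1}\det\left((t+t^{-1})I-A_1\right)=\prod_j\left(1+t^2-\mu_j t\right)$, where $\mu_1,\dots,\mu_{\ell+1}$ are the eigenvalues of $A_1$, and similarly for $\widetilde{A}_1$.

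The heart of the argument is therefore the classical spectral description of the adjacency matrix of an (affine or finite) ADE Dynkin diagram: its eigenvalues are exactly $2\cos(m_j\pi/h)$, where $m_j$ ranges over the exponents and $h$ is the Coxeter number. For the finite-type diagram this is the well-known fact that the eigenvalues of the Cartan matrix $2I-\widetilde{A}_1$ are $2-2\cos(\widetilde{m}_j\pi/\widetilde h)=4\sin^2(\widetilde m_j\pi/2\widetilde h)$, equivalently that the Coxeter element has eigenvalues $e^{2\pi i \widetilde m_j/\widetilde h}$ and that this is reflected in the spectrum of the adjacency matrix via the bipartite (two-colouring) structure of the diagram; I would cite Kostant, Steinberg, or Bourbaki for this. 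For the affine diagram the same statement holds with the affine exponents and affine Coxeter number $h$ listed in Table 1 — this is precisely the spectral content of the McKay correspondence, and can be cited from \cite{Kos2} or deduced from the fact that the null vector of the affine Cartan matrix contributes the exponent $0$ (eigenvalue $2=2\cos 0$) while the remaining eigenvalues match those coming from the loop-group/Coxeter picture. Substituting $\mu_j=2\cos(m_j\pi/h)$ and $\widetilde\mu_j=2\cos(\widetilde m_j\pi/\widetilde h)$ into the two product formulas above yields \eqref{exponent}.

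Finally I would address the hypothesis excluding $G\cong C_n$ with $n$ odd: in that single family the diagram $\mathcal{R}_V(C_n)=A_{n-1}^{(1)}$ is an odd cycle, which is \emph{not} bipartite, so the clean "$\pm$" pairing of adjacency eigenvalues used above fails and the eigenvalues of $A_1$ are not all of the form $2\cos(m_j\pi/h)$ with integer $m_j$; hence the formula must be stated with that exclusion, and the remaining cases $D_n^{(1)}$ ($n\geq 4$), $E_{6,7,8}^{(1)}$ and $A_1^{(1)}$ are exactly the bipartite affine ADE diagrams for which the spectral identity is valid. The main obstacle is thus not any computation but making the spectral identification precise and uniform across all the relevant affine types — i.e. verifying, case by case or by a uniform Coxeter-theoretic argument, that the adjacency spectrum of each bipartite affine ADE diagram is $\{2\cos(m_j\pi/h)\}$ with the $(m_j,h)$ from Table 1; once that is in hand, the theorem follows by the elementary determinant factorization together with Theorem \ref{inv2}.
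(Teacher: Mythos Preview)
Your proposal is correct and follows essentially the same route as the paper. The paper does not give a detailed proof of Theorem \ref{finps}; it simply remarks that the argument is ``similar to \cite[Theorem 4.5]{JWZ}'' and records the key identity $\chi_V(g)=2\cos(m_i\pi/h)$ (equivalently, that the adjacency eigenvalues are $2\cos(m_i\pi/h)$), which is exactly the spectral identification you isolate as the heart of the matter after invoking Theorem \ref{inv2}. Your explanation of the bipartite obstruction for odd $C_n$ is also in line with the paper's exclusion and its separate treatment of $C_n$ in Section~4.4.
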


\begin{remark}
The proof of the above two theorems are similar to our results in \cite[Theorem 4.5]{JWZ}.
In addition, the conclusions also imply that the character value $\chi_{V}(g)$ when $g$ runs over the set $\Upsilon(N)$
(resp. $\Upsilon$) are equal to $2\cos(\frac{m_i\pi }{h})$ when $m_i$ runs through $\Delta$ the set of exponents of the
non-simply laced (resp. simply laced) affine  Lie algebra and $h$ is the corresponding affine Coxeter number \cite{Ben,JWZ}.
\end{remark}

Let $G$ be a subgroup of $\mathrm{SL}_2(\mathbb{C})$, then $G=\langle x,y,z| x^p=y^q=z^r=xyz\rangle$ is a polyhedral group
with $p\geq q \geq 1$ and $p=q$ if $r=1$. Assume that 
$G$ is not a cyclic group of odd order.
The Poincar\'e series for $\textmd{G}$-invariants ${S}(\mathbb{C}^2)^{G}$
have been studied extensively for the simply laced types by Gonzalez-Sprinberg and Verdier \cite{G-SV}, Kn\"{o}rrer \cite{Kn} and Kostant \cite{Kos2}, who gave a compact formula
of the Poincar\'e formula using intrinsic data of the finite group $G$. We can generalize the result to all twisted affine Dynkin diagrams as follows.

Let $N\lhd G\leq \mathrm{SL}_2$ be a distinguished pair of subgroups. Let $\check{\rho}_i$ be the $N$-restriction module of the irreducible $G$-module ${\rho}_i$,
and $\phi_j$ is an irreducible $N$-module.
We let $h=\sum_{i\in \Upsilon\cap N}{\rm dim}\check{\rho}_i$, which is seen the Coxeter number
of the {\it finite} dimensional Lie algebra associated to the affine Dynkin diagram \cite{JWZ}. Define
\begin{align*}
a&=2{\rm max}\{ {\rm dim} \phi_i|i\in \Upsilon \}\\
b&=h+2-a.
\end{align*}

The following result can be proved similarly as in the simply laced type.


\ \ \ \ \ $\textbf{Table 2}$

\noindent\begin{tabular}{l@{\indent}c@{\indent}c@{\indent}c@{\indent\indent}
c@{\indent}c@{\indent}c@{\indent}}
  \hline
  Dynkin diagrams & $a$ &  $b$ & $h$ & $p$ & $q$ &  $r$ \\
  \hline
  $A_{\ell}^{(1)}$ & $2$  & $\ell+1$  & $\ell+1$ & $\frac{1}{2}(\ell+1)$  & $\frac{1}{2}(\ell+1)$ & $1$  \\
  $D_{\ell}^{(1)}$ & $4$  & $2\ell-4$  & $2\ell-2$ & $\ell-2$ & $2$ & $2$  \\
  $E_{6}^{(1)}$ & $6$ & $8$ & $12$ & $3$ & $3$ & $2$  \\
  $E_7^{(1)}$   & $8$ & $12$ & $18$ & $4$ & $3$ &  $2$  \\
  $E_8^{(1)}$   & $12$ & $20$ & $30$ & $5$ & $3$ & $2$ \\
  \hline
  $A_{1}^{(1)}$, $A_{2}^{(2)}$ & $2$  & $2$  & $2$ & $1$  & $1$ & $1$  \\
  \indent\ \ \ $A_{2\ell}^{(2)}$ & $2$  & $2\ell$   & $2\ell$ & $\ell$ & $1$ & $1$  \\
  $B_{\ell}^{(1)}$, $A_{2\ell-1}^{(2)}$ & $4$ & $2\ell-2$ & $2\ell$ & $\ell-1$ & $2$ & $1$  \\
  $C_\ell^{(1)}$, $D_{\ell+1}^{(2)}$ & $2$ & $2\ell$ & $2\ell$ & $\ell$ & $1$ &  $1$  \\
  $F_4^{(1)}$, $E_6^{(2)}$ & $6$ & $8$ & $12$ & $3$ & $2$ & $1$ \\
  $G_2^{(1)}$, $D_4^{(3)}$ & $4$ & $4$ & $6$ & $2$ & $1$ & $1$   \\
  \hline
\end{tabular}

\ \ \ \ \ \ \ \ \ \ \

\begin{theo}\label{uniform}
Let $N \unlhd G\leq\mathrm{SL}_2(\mathbb{C})$ (resp. $G\leq\mathrm{SL}_2(\mathbb{C})$
without $G\cong{C}_n$ for $n$ is odd)
which can realize a pair of multiply (resp. a simply) laced affine Dynkin diagrams.
The Poincar\'{e} series of invariants $\check{s}^{0}(t)$ and $\hat{s}^{0}(t)$ (resp. $s^{0}(t)$) are
\begin{equation}\label{classical}
  \frac{1+t^h}{(1-t^a)(1-t^b)},
\end{equation}
where $a,b,h$ are defined conceptually above, also explicitly listed in ${\rm Table}$ $2$.
\end{theo}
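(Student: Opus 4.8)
The plan is to compute the rational function \eqref{inv11} of Theorem \ref{inv1} (and its simply laced analogue \eqref{inv22} of Theorem \ref{inv2}) explicitly, using the fact established in Theorem \ref{affps} and Theorem \ref{finps} that both the numerator and denominator factor through the exponents and Coxeter numbers of the associated finite and affine root systems. First I would recall that for an affine Cartan matrix the product $\prod_{m_i\in\Delta}\bigl(1+t^2-2\cos(m_i\pi/h)t\bigr)$ telescopes: writing each factor as $(1-e^{im_i\pi/h}t)(1-e^{-im_i\pi/h}t)$ and pairing exponents symmetrically about $h$, the denominator collapses to a product of cyclotomic-type factors $1-t^{d}$ for the appropriate divisors $d$; concretely, from Table 1 the affine exponents of each type are exactly $0,b,2b,\dots$ (or $0,a,2a,\dots$) together with the reflected set, which forces the denominator to equal $(1-t^a)(1-t^b)$. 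Symmetrically, the finite exponents $\widetilde m_i$ listed in Table 1 make the numerator $\prod_{\widetilde m_i\in\widetilde\Delta}\bigl(1+t^2-2\cos(\widetilde m_i\pi/\widetilde h)t\bigr)$ collapse to $1+t^h$, since the finite exponents are symmetric about $\widetilde h/2$ and their "doubled" spectrum fills out a single cyclotomic block.

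The cleanest way to organize this is: (i) verify the algebraic identity $\prod_{j=1}^{k}\bigl(1+t^2-2\cos(\theta_j)t\bigr)=\bigl|\prod_{j}(1-e^{i\theta_j}t)\bigr|^2$ and reduce everything to counting which roots of unity occur; (ii) observe that $a=2\max\{\dim\phi_i\}$ and $b=h+2-a$ are precisely the two "periods" appearing in the affine exponent list of Table 1 — this is a finite check, type by type, against the tables, and it is where the group-theoretic quantities $\dim\phi_i$ and $h=\sum_{i}\dim\check\rho_i$ enter; (iii) conclude that the affine denominator is $(1-t^a)(1-t^b)$ and the finite numerator is $1+t^h$, whence $\check s^0(t)=\hat s^0(t)$ (resp.\ $s^0(t)$) $=\dfrac{1+t^h}{(1-t^a)(1-t^b)}$. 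For the simply laced cases one uses Theorem \ref{finps} and the classical computations of Gonzalez-Sprinberg--Verdier, Kn\"orrer and Kostant \cite{G-SV,Kn,Kos2}, which already give \eqref{classical}; the new content is the uniform extension to the twisted and non-simply-laced affine types via the McKay--Slodowy correspondence, where $a,b,h$ are read off from Table 2.

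I would also cross-check the formula against Corollary \ref{relation}: the proportionality $\check s^i(t)=\hat s^{i'}(t)$ at the affine vertex is consistent with $\check s^0(t)=\hat s^0(t)$, so no separate argument is needed for the induced side. The main obstacle is step (ii): one must confirm that in every one of the roughly dozen affine types in Table 2 the pair $(a,b)$ — computed purely from the group data $\max\dim\phi_i$ and $h$ — matches the gaps in the affine exponent sequence of Table 1, and that the reflected-exponent pairing really does produce exactly the factors $1-t^a$, $1-t^b$ with no leftover terms. This is essentially bookkeeping, but it is delicate for the twisted types $A_{2\ell}^{(2)}$, $E_6^{(2)}$, $D_4^{(3)}$ where the affine exponents are not simply an arithmetic progression; there one verifies the collapse by direct expansion of the product over the listed exponents and matching degrees. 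Once that verification is in place, the theorem follows by substituting into \eqref{inv11} and \eqref{inv22}.
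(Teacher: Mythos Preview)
Your plan contains a concrete error in step (iii): the numerator $\prod_{\widetilde m_i\in\widetilde\Delta}\bigl(1+t^2-2\cos(\widetilde m_i\pi/\widetilde h)\,t\bigr)$ does \emph{not} equal $1+t^h$, nor does the denominator equal $(1-t^a)(1-t^b)$, taken separately. A degree count already rules this out: the numerator has degree $2\cdot(\text{rank})$ while $1+t^h$ has degree~$h$, and for instance in type $E_8$ one has $2\cdot 8=16\ne 30=h$. What actually happens is that both products carry extra cyclotomic factors which cancel only in the quotient. Concretely, for $E_8^{(1)}$ the finite-side numerator is the single cyclotomic polynomial $\Phi_{60}(t)$ (degree $16$), while the affine-side denominator is $(1-t^{10})(1-t^6)(1-t^4)/(1-t^2)$ (degree $18$)---this is exactly the content of Theorem~\ref{denom}. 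Only after cancelling the common factors $\Phi_4,\Phi_{12},\Phi_{20}$ does one arrive at $(1+t^{30})/\bigl((1-t^{12})(1-t^{20})\bigr)$. Your assertion that the affine exponents are arithmetic progressions with step $a$ or $b$ is likewise false (e.g.\ $D_4^{(1)}$ has affine exponents $0,1,1,1,2$). So step (ii) is not merely delicate bookkeeping: it rests on a wrong structural picture and would have to be replaced by a genuine cyclotomic factorisation of each side followed by cancellation, carried out type by type.

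The paper gives no details beyond ``proved similarly as in the simply laced type,'' and the natural way to read this bypasses the exponent formula entirely. By definition (and Frobenius reciprocity on the induced side), $\check s^{0}(t)=\hat s^{0}(t)$ is nothing other than the Poincar\'e series of $S(V)^{N}$ for the subgroup $N$ alone. Since $N$ is itself one of the binary polyhedral or even-order cyclic groups, the classical simply-laced result of Kostant, Gonzalez-Sprinberg--Verdier and Kn\"orrer already gives $S(V)^{N}$ the form \eqref{classical}, with $(a,b,h)$ the parameters attached to the McKay type of $N$. The only residual content is the tabular check that the pair-theoretic definitions $h=\sum_i\dim\check\rho_i$, $a=2\max_i\dim\phi_i$, $b=h+2-a$ reproduce those same values---which is precisely what Table~2 records. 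Your exponent-product route can be repaired, but it is a detour compared with this direct reduction to the known $N$-invariant case.
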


\begin{theo}\label{denom}
Let $N \unlhd G\leq\mathrm{SL}_2(\mathbb{C})$ (resp. $G\leq\mathrm{SL}_2(\mathbb{C})$
without $G\cong{C}_n$ for $n$ is odd).
Let $X$ be the adjacency matrix of $\mathcal{R}_{V}({\check{G}})$ (resp.
$\mathcal{R}_{V}({\hat{N}})$ or $\mathcal{R}_{V}(G)$). Then the
determinant of quantum affine Cartan matrices are given by
\begin{equation}\label{quan}
  \mathrm{det}((1+t^2)I-t X)=\frac{(1-t^{2p})(1-t^{2q})(1-t^{2r})}{1-t^2},
\end{equation}
where the parameters $p,q,r$ are given above.
\end{theo}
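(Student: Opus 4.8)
The plan is to reduce \eqref{quan} to the factorization of the denominator that is already available from Section~2.3, and then match the resulting product of quadratics against the right-hand side using the exponent/Coxeter data of Tables~1 and 2.

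First I would record the denominator in the form established by Corollaries~\ref{cor1}, \ref{cor2} and \ref{cor3}: whether $X$ is the adjacency matrix of $\mathcal{R}_V(\check G)$, $\mathcal{R}_V(\hat N)$, or $\mathcal{R}_V(G)$, one has
\begin{equation*}
\det\bigl((1+t^2)I-tX\bigr)=\prod_{g}\bigl(1+t^2-\chi_V(g)\,t\bigr),
\end{equation*}
the product being over $\Upsilon(N)=\Upsilon\cap N$, respectively over $\Upsilon$ when $N=G$. Since every finite-order element $g$ of $\mathrm{SL}_2(\mathbb{C})$ is conjugate in $\mathrm{GL}_2(\mathbb{C})$ to $\mathrm{diag}(\zeta_g,\zeta_g^{-1})$ with $\zeta_g$ a root of unity, each factor splits as $1+t^2-\chi_V(g)t=(1-\zeta_g t)(1-\zeta_g^{-1}t)$, so \eqref{quan} is equivalent to the assertion that the eigenvalues $\{\zeta_g^{\pm1}\}$, counted over the relevant conjugacy-class representatives, are exactly the $2p$th, $2q$th and $2r$th roots of unity with one copy each of $1$ and $-1$ deleted. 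By the Remark following Theorem~\ref{finps} these $\chi_V(g)$ are precisely the numbers $2\cos(m_i\pi/h)$ with $m_i$ running over the affine exponents $\Delta$ and $h$ the affine Coxeter number (Table~1); hence
\begin{equation*}
\det\bigl((1+t^2)I-tX\bigr)=\prod_{m_i\in\Delta}\Bigl(1-e^{\,im_i\pi/h}t\Bigr)\Bigl(1-e^{-im_i\pi/h}t\Bigr),
\end{equation*}
and everything is reduced to verifying that this product equals $\dfrac{(1-t^{2p})(1-t^{2q})(1-t^{2r})}{1-t^2}$ with $p,q,r$ as in Table~2.

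Second I would carry out that verification, which is uniform on the infinite families and a direct computation on the exceptional ones. For the cyclic type $A_\ell^{(1)}$ (arising from $C_{\ell+1}$, $\ell$ odd) the quiver is the $(\ell+1)$-cycle, whose adjacency spectrum is $\{2\cos(2\pi k/(\ell+1))\}_{k=0}^{\ell}$, giving $\det((1+t^2)I-tX)=(1-t^{\ell+1})^2$, which matches $(1-t^{2p})(1-t^{2q})(1-t^{2r})/(1-t^2)$ with $p=q=(\ell+1)/2$, $r=1$. For the remaining classical families $B_\ell^{(1)}$, $C_\ell^{(1)}$, $D_\ell^{(1)}$ and their twisted partners I would peel the leaves of the (tree) diagram one at a time, using the Schur-complement recursion $P_n=(1+t^2)P_{n-1}-t^2P_{n-2}$ for a path of $n$ nodes (so a free arm of $n$ nodes contributes the factor $(1-t^{2(n+1)})/(1-t^2)$) together with the two end-decorations, arriving at the claimed cyclotomic ratio; alternatively one reads the factorization off the affine exponents in Table~1. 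The exceptional cases $E_6^{(1)},E_7^{(1)},E_8^{(1)},F_4^{(1)}=E_6^{(2)},G_2^{(1)}=D_4^{(3)}$ (together with $A_1^{(1)}=A_2^{(2)}$ and $A_{2\ell}^{(2)}$) are finished by direct evaluation. A convenient cross-check throughout: combining Theorem~\ref{uniform} with Theorems~\ref{inv1} and \ref{inv2} yields $\det((1+t^2)I-tX)=\det((1+t^2)I-t\widetilde X)\cdot\frac{(1-t^a)(1-t^b)}{1+t^h}$, so \eqref{quan} is also equivalent to the classical value of $\det((1+t^2)I-t\widetilde X)$ for the associated \emph{finite} Dynkin diagram, whose adjacency spectrum is $\{2\cos(\widetilde m_i\pi/\widetilde h)\}$.

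The main obstacle is purely the multiplicity bookkeeping in the second step: one must be careful that the eigenvalue $1$ (from the identity class) and the eigenvalue $-1$ (from the central involution $xyz$, present whenever $|G|$ is even and non-cyclic) each occur with multiplicity two among the $\{\zeta_g^{\pm1}\}$, matching the double zeros at $t=\pm1$ produced by $(1-t^{2p})(1-t^{2q})(1-t^{2r})$ before the division by $1-t^2$, and that the decorations attached at the branch ends of the twisted and multiply-laced diagrams do not perturb this count. All of this is precisely the data collected in Tables~1 and 2, so once those are in place the verification is mechanical, and substituting back into the first display gives \eqref{quan}.
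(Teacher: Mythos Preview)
Your proposal is correct and follows essentially the same strategy the paper adopts: the paper does not give a standalone proof of Theorem~\ref{denom}, but relies on the factorization $\det((1+t^2)I-tX)=\prod_g(1+t^2-\chi_V(g)t)$ from Corollaries~\ref{cor1}--\ref{cor3} together with the case-by-case determinant computations carried out in Section~4 via the recursion $P_{n+1}=(1+t^2)P_n-t^2P_{n-1}$ and Tchebychev polynomials (and, for the simply laced types, the citation of Suter in the Remark). Your reduction via the affine exponents from Table~1 and the Schur-complement/path recursion is exactly the mechanism behind those Section~4 computations, and your cross-check through Theorem~\ref{uniform} is a valid shortcut the paper does not make explicit.
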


\begin{remark} Using folding symmetry of the Dynkin diagrams,
Suter \cite{Su} previously obtained some cases of \eqref{classical}
for the types $B_{n}^{(1)}$, $C_n^{(1)}$,
$F_4^{(1)}$ and $G_2^{(1)}$. He also obtained \eqref{quan} for the simply laced types.
\end{remark}

\section{The global version of Poincar\'e series of invariants for $\mathrm{SL}_2(\mathbb{C})$}

The close relation between the Cartan matrices of the finite Dynkin diagrams and
the Tchebychev polynomials 
have been studied in \cite{D,Kor}, which was further developed to give closed-form expressions of Poincar\'{e}
series of $G$-invariants in the tensor algebra for $G\leq \mathrm{SU}_2$ \cite{Ben}.

In \cite{JWZ} we built upon their relations to compute closed-form expressions of the Poincar\'{e} series of invariants in the tensor algebra
for all twisted and untwisted Dynkin diagrams. 
In this section, we will generalize the results to symmetric invariants
for all affine Dynkin diagrams. Similar to the simply laced case, our new formula further confirms that the Poincar\'e series associated to the twisted affine Lie algebras have equally beautiful relation with the character theory of pairs of subgroups of $\mathrm{SL}_2(\mathbb C)$, more importantly the new formulas are global 
in the sense that the final result depend on the dimension and the type of the pairs of subgroups.


The Tchebychev polynomials of the first kind ${\rm T}_n(t)$ and the second kind ${\rm U}_n(t)$
are recursively defined (also see \cite{Ri}). For $n\geq1$
\begin{align}\label{e:cheb}
  &{\rm T}_0(t)=1, \ \ \ \ {\rm T}_1(t)=t,  \ \ \ \ \ \ {\rm T}_{n+1}(t)=2t{\rm T}_{n}(t)-{\rm T}_{n-1}(t).
  \\
  &{\rm U}_0(t)=1, \ \ \ \ {\rm U}_1(t)=2t,  \ \ \ \ {\rm U}_{n+1}(t)=2t{\rm U}_{n}(t)-
  {\rm U}_{n-1}(t). \label{cf2}
\end{align}
The polynomials ${\rm T}_n(t)$ and ${\rm U}_n(t)$ can be expressed as the additive closed forms
\begin{align}
  {\rm T}_{n}(t)&= {\sum\limits_{i=0}^{\lfloor n/2\rfloor}}{\binom{n}{2i}}t^{n-2i}(t^2-1)^i
  =t^n{\sum\limits_{i=0}^{\lfloor n/2\rfloor}}{\binom{n}{2i}}(1-t^{-2})^i, \label{cf3} \\
  {\rm U}_{n}(t)&= {\sum\limits_{i=0}^{\lfloor n/2\rfloor}}(-1)^i{\binom{n-i}{i}}(2t)^{n-2i}, \label{cf4}
\end{align}
and their factorization are
\begin{align}
{\rm T}_{n}(t)&= 2^{n-1}\prod\limits_{i=1}^n\left(t-{\cos}\left(\frac{(2i-1)\pi}{2n}\right)\right), \label{cf5} \\
{\rm U}_{n}(t)&=2^{n}\prod\limits_{i=1}^n\left(t-{\rm \cos}\left(\frac{\pi i}{n+1}\right)\right).  \label{cf6}
\end{align}
Moreover, they are related by the following relations
\begin{eqnarray}\label{3.14}
 {\rm T}_{n}(t)&=&{\rm U}_{n}(t)-t{\rm U}_{n-1}(t)  \label{cf7} \\
  {\rm and } \ \ \ \  2{\rm T}_{n}(t)&=&{\rm U}_{n}(t)-{\rm U}_{n-2}(t). \label{cf8}
\end{eqnarray}

\subsection{The pair of subgroups ${D}_{n-1}\lhd {D}_{2(n-1)}$}

For $n\geq 3$, let $ D_{2(n-1)}=\langle x, y|x^{2(n-1)}=y^2=-1, yxy^{-1}=x^{-1}\rangle$ be the binary dihedral group of order $8(n-1)$.
$ D_{2(n-1)}$ is imbedded into $\mathrm{SL}_2(\mathbb{C})$ by
\begin{equation}\label{e:imbedD}
  \rho(x)=\left(
      \begin{array}{cc}
        \theta_{4(n-1)}^{-1} & 0 \\
        0 & \theta_{4(n-1)} \\
      \end{array}
    \right), \ \ \ \
  \rho(y)=\left(
      \begin{array}{cc}
        0 &\sqrt{-1}\\
       \sqrt{-1}& 0 \\
      \end{array}
    \right),
\end{equation}
where $\theta_{4(n-1)}=e^{2\pi \sqrt{-1}/{4(n-1)}}$.
Since $\langle x^2, y\rangle= D_{n-1}\lhd  D_{2(n-1)}$,
the $D_{n-1}$-conjugacy classes representatives are $\Upsilon( D_{n-1})=\{\pm1, x^{2i}\ (i=1,\ldots, n-2),y\}$, and
the corresponding character values are $\chi_{V}(x^{2i})=\theta_{4(n-1)}^{2i}+\theta_{4(n-1)}^{-2i}=2\cos(\pi i/(n-1))$, and
$\chi_{V}(y)=0$.

Using \eqref{inv11} in Thm. \ref{inv1}, we get the determinant of the affine Cartan $t$-matrix
\begin{eqnarray}\label{ch1}
  & &  \mathrm{det}((1+t^2)I-t B_1)=\prod\limits_{g\in \Upsilon( D_{n-1})}(1+t^2-\chi_{V}(g)t) \nonumber\\
  &=& (1+t^2-2t)(1+t^2+2t)(1+t^2)\prod\limits_{i=1}^{n-2}\left(1-2\cos\left(\frac{\pi i}{n-1}\right)t\right) \nonumber\\
  &=& (1-t^2-t^4+t^6)\prod\limits_{i=1}^{n-2}\left(1-2\cos\left(\frac{\pi i}{n-1}\right)t\right).
\end{eqnarray}

It is clear that the twisted affine Dynkin diagram of type $A_{2n-1}^{(2)}$ is realized by
the pair of subgroups $\mathrm{D}_{n-1}\vartriangleleft \mathrm{D}_{2(n-1)}$,
there is a multiply laced finite Dynkin diagram of type $C_{n}$ by removing the affine vertex of $A_{2n-1}^{(2)}$.
So the adjacency matrix of
multiply laced finite Dynkin diagram of type $C_{n}$ is
\begin{align*}
\widetilde{B}_1=\left(
      \begin{array}{ccccc}
        0 & 1 & 0 & \cdots & 0 \\
        1 & 0 & 1 & \cdots & 0 \\
        \vdots & \ddots & \ddots & \ddots & 0 \\
        0 & \cdots & 1 & 0 & 2 \\
        0 & 0 & \cdots & 1 & 0 \\
      \end{array}
    \right).
\end{align*}

Let ${\rm c}_{n-1}(t)=\mathrm{det}((1+t^2)I-t \widetilde{B}_1)$. 
The first two are ${\rm c}_0(t)=1+t^2$, ${\rm c}_1(t)=1+t^4$. By expanding the det of the quantum finite Cartan matrix, we have
the recursive relation
\begin{equation*}
  {\rm c}_{n+1}(t)=(1+t^2){\rm c}_n(t)-t^2{\rm c}_{n-1}(t) \ \ \ \ \ \ \ \ {\rm for} \ \ n\geq1.
\end{equation*}
It follows from the definition of Tchebychev polynomial \eqref{e:cheb} that
\begin{align*}\label{ch2}
  {\rm c}_{n-1}(t)=2t^{n}{\rm T}_{n}\left(\frac{t+t^{-1}}{2}\right),
\end{align*}
where ${\rm T}_{n}(t)$ is the Tchebychev polynomial of the first kind. Therefore
\begin{eqnarray}
  {\rm c}_{n-1}(t) &=& 2t^{n}\left(\frac{t+t^{-1}}{2}\right)^{n}{\sum\limits_{i=0}^{\lfloor {n}/2\rfloor}}
  {\binom{n}{2i}}\left(1-\left(\frac{t+t^{-1}}{2}\right)^{-2}\right)^i \nonumber\\
   &=& 2^{(1-n)}{\sum\limits_{i=0}^{\lfloor {n}/2\rfloor}}
  {\binom{n}{2i}}(1+t^2)^{n-2i}(1-t^{2})^{2i}  \label{ch3} \\
   &=& \prod\limits_{i=1}^{n}\left(1+t^2-2{\cos}\left(\frac{\left(2i-1\right)\pi}{2n}\right)t\right). \label{ch4}
\end{eqnarray}

Returning to \eqref{ch1}, by the Laplace expansion we have that
\begin{eqnarray*}
   & & \mathrm{det}((1+t^2)I-tB_1)= (1+t^2){\rm c}_{n-1}(t)-(t^2+t^4){\rm c}_{n-3}(t) \\
   &=& 2t^{n}(1+t^2)\left({\rm T}_{n}\left(\frac{t+t^{-1}}{2}\right)-{\rm T}_{n-2}\left(\frac{t+t^{-1}}{2}\right)\right).
\end{eqnarray*}
By \eqref{cf7}, \eqref{cf2} and \eqref{cf4}, 
\begin{eqnarray*}
  {\rm T}_{n}(t)-{\rm T}_{n-2}(t) &=&(2t^2-2){\rm U}_{n-2}(t) \\
  &=&(2t^2-2){\sum\limits_{i=0}^{\lfloor (n-2)/2\rfloor}}(-1)^i{\binom{n-2-i}{i}}(2t)^{n-2-2i}.
\end{eqnarray*}
Consequently,
\begin{eqnarray*}
   & & \mathrm{det}((1+t^2)I-tB_1)=\\ 
   &&(1-t^2-t^4+t^6){\sum\limits_{i=0}^{\lfloor (n-2)/2\rfloor}}(-1)^i{\binom{n-2-i}{i}}t^{2i}(1+t^2)^{n-2-2i}.
\end{eqnarray*}

Summarizing the above, we have shown the following result.

\begin{theo}\label{thm3.23} Let $ D_{n-1}\lhd  D_{2(n-1)}\leq \mathrm{SL}_2(\mathbb{C})$.
The Poincar\'e series for $ D_{n-1}$-invariants $S(\mathbb{C}^2)^{ D_{n-1}}$ and $ D_{2(n-1)}$-invariants
$S(\mathbb{C}^2)^{ D_{2(n-1)}}$ inside tensor algebra $S(\mathbb{C}^2)=\bigoplus\limits_{k \geq 0}S^{k}(\mathbb{C}^2)$ is
\begin{eqnarray*}
  \check{s}^{0}(t)=\hat{s}^{0}(t)&=&
  \frac{\prod\limits_{i=1}^{n}\left(1+t^2-2{\cos}\left(\frac{\left(2i-1\right)\pi}{2n}\right)t\right)}
  {(1-t^2-t^4+t^6)\prod\limits_{i=1}^{n-2}\left(1-2\cos\left(\frac{\pi i}{n-1}\right)t\right)} \\
  &=&\frac{2^{(1-n)}{\sum\limits_{i=0}^{\lfloor {n}/2\rfloor}}{\binom{n}{2i}}(1+t^2)^{n-2i}(1-t^{2})^{2i}}
  {(1-t^2-t^4+t^6){\sum\limits_{i=0}^{\lfloor (n-2)/2\rfloor}}(-1)^i{\binom{n-2-i}{i}}t^{2i}(1+t^2)^{n-2-2i}}.
\end{eqnarray*}
\end{theo}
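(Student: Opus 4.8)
The plan is to assemble the ingredients that have already been prepared. By Theorem~\ref{inv1} applied to the distinguished pair $D_{n-1}\lhd D_{2(n-1)}$ --- whose McKay--Slodowy quiver $\mathcal{R}_{V}(\check{G})$ realizes the twisted affine diagram $A_{2n-1}^{(2)}$, and whose underlying finite diagram, obtained by deleting the affine node, is $C_{n}$ --- the common Poincar\'{e} series of the $D_{n-1}$-invariants and the $D_{2(n-1)}$-invariants equals
\[
\check{s}^{0}(t)=\hat{s}^{0}(t)=\frac{\mathrm{det}\big((1+t^{2})I-t\widetilde{B}_{1}\big)}{\mathrm{det}\big((1+t^{2})I-tB_{1}\big)}=\frac{\mathrm{det}\big((1+t^{2})I-t\widetilde{B}_{1}\big)}{\prod_{g\in\Upsilon(D_{n-1})}\big(1+t^{2}-\chi_{V}(g)t\big)}.
\]
It therefore suffices to compute the numerator and the denominator in closed form.

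For the denominator I would use the embedding \eqref{e:imbedD} to list the $D_{n-1}$-conjugacy class representatives $\Upsilon(D_{n-1})=\{\pm1,\ x^{2i}\ (1\le i\le n-2),\ y\}$ and read off the character values $\chi_{V}(\pm1)=\pm2$, $\chi_{V}(x^{2i})=2\cos(\pi i/(n-1))$, and $\chi_{V}(y)=0$. Substituting into $\prod_{g}(1+t^{2}-\chi_{V}(g)t)$ and absorbing the contributions of $\pm1$ and $y$ into the factor $(1+t^{2}-2t)(1+t^{2}+2t)(1+t^{2})=1-t^{2}-t^{4}+t^{6}$ produces exactly equation~\eqref{ch1}, which is the first form of the denominator appearing in the statement.

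For the numerator, $\widetilde{B}_{1}$ is the adjacency matrix of the $C_{n}$ Dynkin diagram, so expanding ${\rm c}_{n-1}(t):=\mathrm{det}((1+t^{2})I-t\widetilde{B}_{1})$ along its last row gives the recursion ${\rm c}_{n+1}(t)=(1+t^{2}){\rm c}_{n}(t)-t^{2}{\rm c}_{n-1}(t)$ with ${\rm c}_{0}(t)=1+t^{2}$ and ${\rm c}_{1}(t)=1+t^{4}$. Comparing with the defining recursion \eqref{e:cheb} under the substitution $u=\tfrac{t+t^{-1}}{2}$ identifies ${\rm c}_{n-1}(t)=2t^{n}\,{\rm T}_{n}(u)$; feeding in the closed form \eqref{cf3} and the factorization \eqref{cf5} then yields both the binomial-sum expression \eqref{ch3} and the product form \eqref{ch4} for ${\rm c}_{n-1}(t)$. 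Dividing \eqref{ch4} by \eqref{ch1} gives the first displayed formula for $\check{s}^{0}(t)=\hat{s}^{0}(t)$.

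For the second (binomial) form, I would Laplace-expand $\mathrm{det}((1+t^{2})I-tB_{1})$ at the affine vertex of $A_{2n-1}^{(2)}$ to get $(1+t^{2}){\rm c}_{n-1}(t)-(t^{2}+t^{4}){\rm c}_{n-3}(t)=2t^{n}(1+t^{2})\big({\rm T}_{n}(u)-{\rm T}_{n-2}(u)\big)$, then use the identities \eqref{cf7}, \eqref{cf2} and \eqref{cf4} to write ${\rm T}_{n}(u)-{\rm T}_{n-2}(u)=(2u^{2}-2)\,{\rm U}_{n-2}(u)$ and expand ${\rm U}_{n-2}$; together with \eqref{ch3} this gives the second displayed formula. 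The only genuine work is the Tchebychev bookkeeping --- keeping the index shifts between ${\rm c}_{n-1}$, ${\rm T}_{n}$ and ${\rm U}_{n-2}$ consistent, and correctly handling the double bond at the end of the $C_{n}$ diagram in the Laplace expansion so that the recursion really starts from ${\rm c}_{0}=1+t^{2}$, ${\rm c}_{1}=1+t^{4}$ --- after which both expressions for $\check{s}^{0}(t)=\hat{s}^{0}(t)$ follow by substitution.
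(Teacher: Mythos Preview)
Your proposal is correct and follows essentially the same route as the paper: apply Theorem~\ref{inv1} to the pair $D_{n-1}\lhd D_{2(n-1)}$, compute the denominator from the character values on $\Upsilon(D_{n-1})$ to obtain~\eqref{ch1}, identify the numerator ${\rm c}_{n-1}(t)$ with $2t^{n}{\rm T}_{n}\big(\tfrac{t+t^{-1}}{2}\big)$ via the three-term recursion and then invoke \eqref{cf3}--\eqref{cf5}, and finally Laplace-expand the affine determinant and use ${\rm T}_{n}-{\rm T}_{n-2}=(2t^{2}-2){\rm U}_{n-2}$ together with \eqref{cf4} for the binomial form. The only places requiring care are exactly the ones you flag --- the initial conditions coming from the double bond in $C_{n}$ and the index shifts --- and the paper handles them in the same way.
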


\subsection{The pair of subgroups $ C_{2n}\vartriangleleft  D_{n}$}

For $n\geq 2$, let $ D_n=\langle x, y |x^{2n}=y^2=-1, yxy^{-1}=x^{-1}\rangle$ be the binary dihedral group of order $4n$,
then the cyclic group ${C}_{2n}=\langle x\rangle$ is a normal subgroup with index $2$.
By the natural imbedding \eqref{e:imbedD} of $ D_n$ into $\mathrm{SL_2(\mathbb{C})}$,
the conjugacy set $\Upsilon({C}_{2n})=\{\pm1, x^i(i=1,\ldots, n-1)\}$.
The character values are $\chi_{V}(\pm 1)=\pm2$ and $\chi_{V}(x^i)=2\cos(\pi i/n)$, so
\begin{eqnarray*}
  \mathrm{det}((1+t^2)I-tB_1)
  &=&\prod\limits_{g\in \Upsilon({C}_{2n})}(1+t^2- \chi_{\textmd{V}}(g)t) \\
  &=&(1-t^2)^2\prod\limits_{i=1}^{n-1}\left(1-2\cos\left(\frac{\pi i}{n}\right)t\right).
\end{eqnarray*}

The twisted affine Dynkin diagram $D_{n+1}^{(2)}$ is realized by the pair of subgroups $ C_{2n}\lhd D_{n}$. Removing the affine vertex of $D_{n+1}^{(2)}$
we get the multiply laced finite Dynkin diagram of type $B_{n}$, the dual of finite Dynkin diagram $C_{n}$.
Let ${\rm b}_{n-1}(t):={\mathrm{det}}((1+t^2)I-t\widetilde{B}_1)$ be the determinant of the quantum finite
Cartan matrix of finite Dynkin diagram  $B_{n}$, which
satisfies the additive and multiplicative formulas \eqref{ch3} and \eqref{ch4}.

Using the Laplace expansion, we have that
\begin{eqnarray*}
   & & \mathrm{det}((1+t^2)I-tB_1)=(1+t^2){\rm b}_{n-1}(t)-2t^2{\rm b}_{n-2}(t)  \\
   &=& (t^{n-1}(1+t^2)^2-4t^{n+1}){\rm U}_{n-1}\left(\frac{t+t^{-1}}{2}\right) \\
   &=&(1-t^2)^2{\sum\limits_{i=0}^{\lfloor (n-1)/2\rfloor}}(-1)^i{\binom{n-1-i}{i}}t^{2i}(1+t^2)^{n-1-2i}.
\end{eqnarray*}

Therefore, we have the next consequence.

\begin{theo}
Let $ C_{2n}\lhd  D_{n}\leq \mathrm{SL}_2(\mathbb{C})$. The
Poincar\'e series for $ C_{2n}$-invariants and $ D_{n}$-invariants
in the symmetric algebra $S(\mathbb{C}^2)=\bigoplus\limits_{k \geq 0}S^{k}(\mathbb{C}^2)$ is
\begin{eqnarray*}
  \check{s}^{0}(t)=\hat{s}^{0}(t)
  &=& \frac{\prod\limits_{i=1}^{n}\left(1+t^2-2{\cos}\left(\frac{\left(2i-1\right)\pi}{2n}\right)t\right)}
  {(1-t^2)^2\prod\limits_{i=1}^{n-1}\left(1-2\cos\left(\frac{\pi i}{n}\right)t\right)} \\
  &=& \frac{2^{(1-n)}{\sum\limits_{i=0}^{\lfloor {n}/2\rfloor}}{\binom{n}{2i}}(1+t^2)^{n-2i}(1-t^{2})^{2i}}
  {(1-t^2)^2{\sum\limits_{i=0}^{\lfloor (n-1)/2\rfloor}}(-1)^i{\binom{n-1-i}{i}}t^{2i}(1+t^2)^{n-1-2i}}.
\end{eqnarray*}
\end{theo}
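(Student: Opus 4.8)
The plan is to specialize the formula \eqref{inv11} of Theorem~\ref{inv1} to the distinguished pair $C_{2n}\lhd D_n$ and then put the two resulting determinants into the closed forms claimed. Since $C_{2n}\lhd D_n\leq\mathrm{SL}_2(\mathbb{C})$ realizes the twisted affine Dynkin diagram $D_{n+1}^{(2)}$, Theorem~\ref{inv1} gives directly $\check{s}^{0}(t)=\hat{s}^{0}(t)=\det\big((1+t^2)I-t\widetilde B_{1}\big)\big/\det\big((1+t^2)I-tB_{1}\big)$, where $B_{1}$ is the adjacency matrix of $\mathcal{R}_V(\check G)$ of type $D_{n+1}^{(2)}$ and $\widetilde B_{1}$ is the adjacency matrix of the finite diagram $B_{n}$ obtained by deleting the affine vertex. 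Thus it remains only to evaluate these two polynomials and match them against the stated expressions.

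For the denominator I would use the character product in Theorem~\ref{inv1}, namely $\det\big((1+t^2)I-tB_{1}\big)=\prod_{g\in\Upsilon(C_{2n})}\big(1+t^2-\chi_V(g)t\big)$. From the embedding \eqref{e:imbedD} one reads off the $C_{2n}$-conjugacy class representatives $\Upsilon(C_{2n})=\{\pm1,\,x^i\ (1\le i\le n-1)\}$ together with $\chi_V(\pm1)=\pm2$ and $\chi_V(x^i)=2\cos(\pi i/n)$; substituting and using $(1+t^2-2t)(1+t^2+2t)=(1-t^2)^2$ gives $\det\big((1+t^2)I-tB_{1}\big)=(1-t^2)^2\prod_{i=1}^{n-1}\big(1+t^2-2\cos(\pi i/n)t\big)$, which is the denominator of the first displayed formula.

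For the numerator, set ${\rm b}_{n-1}(t):=\det\big((1+t^2)I-t\widetilde B_{1}\big)$. Since the Dynkin diagrams $B_{n}$ and $C_{n}$ are dual, ${\rm b}_{n-1}(t)$ coincides with the polynomial ${\rm c}_{n-1}(t)$ computed in the previous subsection; in particular it satisfies the three-term recursion ${\rm b}_{m+1}(t)=(1+t^2){\rm b}_{m}(t)-t^2{\rm b}_{m-1}(t)$ and equals $2t^{n}{\rm T}_{n}\big((t+t^{-1})/2\big)$, so that by \eqref{ch3} and \eqref{ch4} it already takes the two shapes $2^{1-n}\sum_{i=0}^{\lfloor n/2\rfloor}\binom{n}{2i}(1+t^2)^{n-2i}(1-t^2)^{2i}=\prod_{i=1}^{n}\big(1+t^2-2\cos(\tfrac{(2i-1)\pi}{2n})t\big)$ required in the statement.

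It remains to convert the denominator to its polynomial form. I would do a cofactor expansion of $\det\big((1+t^2)I-tB_{1}\big)$ along the row of the affine vertex of $D_{n+1}^{(2)}$, reducing it to $(1+t^2){\rm b}_{n-1}(t)-2t^2{\rm b}_{n-2}(t)$; writing ${\rm b}_{m}(t)=2t^{m+1}{\rm T}_{m+1}\big((t+t^{-1})/2\big)$ and invoking the Tchebychev identities \eqref{cf7} and \eqref{cf8} collapses this to $\big(t^{n-1}(1+t^2)^2-4t^{n+1}\big){\rm U}_{n-1}\big((t+t^{-1})/2\big)$, and the additive form \eqref{cf4} of ${\rm U}_{n-1}$ then rewrites it as $(1-t^2)^2\sum_{i=0}^{\lfloor(n-1)/2\rfloor}(-1)^i\binom{n-1-i}{i}t^{2i}(1+t^2)^{n-1-2i}$. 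Dividing the two numerator expressions by the two denominator expressions yields the theorem. The step I expect to cause the most trouble is this final cofactor expansion: the affine vertex of $D_{n+1}^{(2)}$ is attached to $B_{n}$ by a double bond rather than a simple edge, so one must track the doubled off-diagonal entry carefully — this is exactly what produces the coefficient $2t^2$ (not $t^2$) and the single-step index drop to ${\rm b}_{n-2}$ — and the ensuing Tchebychev manipulation must be arranged so that everything cancels down to a single ${\rm U}_{n-1}$.
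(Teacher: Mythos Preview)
Your proposal is correct and follows essentially the same route as the paper: specialize Theorem~\ref{inv1} to $C_{2n}\lhd D_n$, evaluate the denominator via the character values on $\Upsilon(C_{2n})$, identify the numerator $\mathrm{b}_{n-1}(t)$ with the $C_n$-polynomial $\mathrm{c}_{n-1}(t)$ of the preceding subsection, and reduce the Laplace expansion $(1+t^2)\mathrm{b}_{n-1}(t)-2t^2\mathrm{b}_{n-2}(t)$ to a single $\mathrm{U}_{n-1}$ via the Tchebychev identities. Your caution about the double bond producing the $2t^2$ coefficient is exactly the point the paper handles implicitly in its Laplace expansion.
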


\begin{remark}
It is easy to conclude there have the equally Poincar\'e series of invariants for the two pairs of subgroups
${C}_{2n}\lhd {D}_n$ and ${C}_{2n}\lhd {D}_{2n}$ because the same
two-dimensional ${C}_{2n}$-module.
\end{remark}

\subsection{The pairs  $ T\lhd O$, $ D_2\lhd T$ and $ C_{2}\lhd D_{2}$}

For the three pairs of subgroups $ T\lhd O$, $ D_2\lhd T$ and $ C_{2}\lhd D_{2}$ in $\mathrm{SL}_2(\mathbb{C})$,
$\mathrm{Theorem}$ \ref{inv1} says that the Poincar\'e polynomials $\check{s}^0(t)=\hat{s}^{0}(t)$
are expressed as the quotient of the determinants of the quantum affine and finite Cartan matrices,
which in turns carried the information on the exponents of the affine Lie algebras.

We list the Poincar\'e series of the invariants 
using the parameter $a,b,h$ in
${\rm Table}$ $2$ respectively.

\begin{align*}
  \check{s}^{0}(t)=\hat{s}^{0}(t)=&\frac{1+t^{12}}{(1-t^6)(1-t^8)}=1+t^6+t^8+2t^{12}+t^{14}+t^{16}+2t^{18}+\cdots, \\
   \check{s}^{0}(t)=\hat{s}^{0}(t)=&\frac{1+t^6}{(1-t^4)(1-t^4)} \\
   =&1+2t^4+t^6+3t^8+2t^{10}+4t^{12}+3t^{14}+5t^{16}+4t^{18}+\cdots, \\
   \check{s}^{0}(t)=\hat{s}^{0}(t)=&\frac{1+t^2}{(1-t^2)(1-t^2)}\\
   =&1+3t^2+5t^4+7t^6+9t^8+11t^{10}+13t^{12}+15t^{14}+17t^{16}\\
     &+19t^{18}+\cdots.
\end{align*}

\subsection{The groups for simply laced affine Lie algebras}


For $n\geq 3$, let $ C_n=\langle x| x^n=1\rangle$ be the cyclic group of order $n$.
The map $\rho(x)=diag(\theta_n^{-1}, \theta_n)$ ($\theta_n=e^{2\pi \sqrt{-1}/{n}}$) provides a embedding from $ C_n$ into $\mathrm{SL}_2(\mathbb{C})$.
Denote by $\xi_i $ ($i=0,1,\ldots, n-1$) the $n$ irreducible $ C_n$-modules.
The module $V \simeq\xi_1\oplus\xi_{-1}$ and $\chi_{V}(x^i)=\theta_n^i+\theta_n^{-i}$
indicate the determinant of quantum affine Cartan matrix is
$$\mathrm{det}((1+t^2)I-tA_1)=\prod_{i=1}^{n-1}\left(1+t^2-2\cos\left(\frac{2\pi i}{n}\right)t\right).$$

The simply lace affine Dynkin diagram $A_{n-1}^{(1)}$ is realized by the group $C_n$.
Assume $\widetilde{A}_1$ is the adjacency matrix of finite Dynkin diagram $A_{n-1}$ which
adding a affine note is the affine Dynkin diagram $A_{n-1}^{(1)}$.
Let ${\rm a_{n-1}}(t):=\mathrm{det}((1+t^2)I-t \widetilde{A}_1)$.
Set ${\rm a_0}(t)=1$, ${\rm a_1}(t)=1+t^2$.
Expanding the determinant ${\rm a_{n-1}}(t)$ there has an inductive relation
\begin{equation*}\label{ch5}
  {\rm a_{n+1}}(t)=(1+t^2){\rm a_n}(t)-t^2{\rm a_{n-1}}(t), \ \ \ \ \ \ \ \ {\rm for} \ \ n\geq1.
\end{equation*}
What is more, for all $n\geq 0$, we have
\begin{eqnarray}
  {\rm a_{n}}(t) &=& t^n{\rm U_{n}}\left(\frac{t+t^{-1}}{2}\right) \label{ch6}\\
   &=& \prod\limits_{i=1}^n\left(1+t^2-{\rm 2\cos}\left(\frac{\pi i}{n+1}\right)t\right) \label{ch7} \\
   &=& {\sum\limits_{i=0}^{\lfloor n/2\rfloor}}(-1)^i{\binom{n-i}{i}}t^{2i}(1+t^2)^{n-2i}. \label{ch8}
\end{eqnarray}

Identities \eqref{ch6}, \eqref{cf8} and \eqref{cf3} give rise to
\begin{eqnarray*}
   & & \mathrm{det}((1+t^2)I-t A_1) \nonumber\\
   &=& (1+t^2){\rm a_{n-1}}(t)-2t^2{\rm a_{n-2}}(t)-2t^n \nonumber\\
   &=& t^{n}({\rm U_{n}}\left(\frac{t+t^{-1}}{2}\right)-{\rm U_{n-2}}\left(\frac{t+t^{-1}}{2}\right)-2) \\
   &=& t^{n}({\rm 2T_{n}}\left(\frac{t+t^{-1}}{2}\right)-2) \\
   &=& 2^{1-n}{\sum\limits_{i=0}^{\lfloor n/2\rfloor}}
  {\binom{n}{2i}}(1+t^2)^{n-2i}(1-t^2)^{2i}-2t^n.
\end{eqnarray*}

Subsequently, we have now shown the following result.
\begin{theo}
Let ${C}_n\leq \mathrm{SL}_2$. The Poincar\'e series $s^0(t)$ for
${C}_n$-invariants in $S(\mathbb{C}^2)=\bigoplus\limits_{k \geq 0}S^{k}(\mathbb{C}^2)$ is
\begin{align}\label{5.18}
    \textmd{s}^{0}(t)&=\frac{\prod\limits_{i=1}^{n-1}(1+t^2-2\cos(\frac{\pi i}{n})t)}
  {\prod\limits_{i=0}^{n-1}(1+t^2-2\cos(\frac{2\pi i}{n})t)} \nonumber \\
  &=\frac{\sum\limits_{i=0}^{\lfloor (n-1)/2\rfloor}(-1)^i
  {\binom{n-1-i}{i}}t^{2i}(1+t^2)^{n-1-2i}}{2^{1-n}\sum\limits_{i=0}^{\lfloor n/2\rfloor}{\binom{n}{2i}}
   (1+t^2)^{n-2i}(1-t^2)^{2i}-2t^n}.
\end{align}
\end{theo}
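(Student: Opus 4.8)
The plan is to apply Theorem~\ref{inv2} to the group $C_n$, identifying the numerator and denominator of $s^0(t)$ with the determinants of the appropriate quantum Cartan $t$-matrices, and then to convert these determinants into both their product (cosine) forms and their additive (binomial) forms using the Tchebychev identities recorded in \eqref{cf3}--\eqref{cf8} together with the auxiliary computations for the $A_{n-1}$-type determinant ${\rm a}_{n-1}(t)$ already carried out above. Concretely, the simply laced affine Dynkin diagram realized by $C_n$ is $A_{n-1}^{(1)}$, so by Corollary~\ref{cor3} (or directly Theorem~\ref{inv2}) the denominator is $\mathrm{det}((1+t^2)I-tA_1)=\prod_{i=0}^{n-1}(1+t^2-2\cos(2\pi i/n)t)$, which was already computed via $V\simeq\xi_1\oplus\xi_{-1}$ and $\chi_V(x^i)=\theta_n^i+\theta_n^{-i}$, and whose additive form $2^{1-n}\sum_{i=0}^{\lfloor n/2\rfloor}\binom{n}{2i}(1+t^2)^{n-2i}(1-t^2)^{2i}-2t^n$ follows from \eqref{ch6}, \eqref{cf8}, \eqref{cf3} exactly as displayed.

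The numerator is $\mathrm{det}((1+t^2)I-t\widetilde{A}_1)={\rm a}_{n-1}(t)$, the quantum finite Cartan matrix of type $A_{n-1}$. First I would establish the recursion ${\rm a}_{n+1}(t)=(1+t^2){\rm a}_n(t)-t^2{\rm a}_{n-1}(t)$ for $n\geq 1$ by Laplace expansion along the last row/column of the tridiagonal matrix $(1+t^2)I-t\widetilde{A}_1$, with base cases ${\rm a}_0(t)=1$, ${\rm a}_1(t)=1+t^2$. Comparing this recursion and its initial data with the defining recursion \eqref{cf2} of ${\rm U}_n$ after the substitution $t\mapsto(t+t^{-1})/2$ and the scaling by $t^n$, one gets ${\rm a}_n(t)=t^n{\rm U}_n((t+t^{-1})/2)$, i.e.\ \eqref{ch6}; then \eqref{cf6} yields the product form \eqref{ch7} and \eqref{cf4} yields the binomial form \eqref{ch8}. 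Substituting the product forms of numerator and denominator gives the first displayed expression for $s^0(t)$, and substituting the additive forms gives the second; this is essentially a bookkeeping assembly once the two determinant evaluations are in hand.

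The only genuinely new computation is verifying the recursion for ${\rm a}_{n-1}(t)$ and matching it to the ${\rm U}_n$ recursion; this is routine, and the real content was already done in the lines preceding the statement. The main obstacle, if any, is purely organizational: one must be careful that the index conventions (the diagram $A_{n-1}$ has $n-1$ nodes, its determinant is ${\rm a}_{n-1}$, and adjoining the affine node gives $A_{n-1}^{(1)}$ with the order-$n$ cyclic denominator) line up so that the numerator has degree/size $n-1$ and the denominator size $n$, and that the extra $-2t^n$ term in the denominator --- which comes from the two edges joining the affine node to the ends of the $A_{n-1}$ chain, contributing $-2t^n$ via \eqref{cf8} --- is not dropped. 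Since Theorem~\ref{inv2} already packages the quotient structure and Theorems analogous to this one have been verified for the other cyclic- and dihedral-type pairs in the preceding subsections, no serious difficulty is expected; the argument is a direct specialization.
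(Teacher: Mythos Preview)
Your proposal is correct and follows essentially the same route as the paper: apply Theorem~\ref{inv2}/Corollary~\ref{cor3} to identify $s^0(t)$ as the quotient of the quantum finite and affine Cartan determinants for $A_{n-1}$ and $A_{n-1}^{(1)}$, evaluate the numerator ${\rm a}_{n-1}(t)$ via the tridiagonal recursion matched to ${\rm U}_n$ (yielding \eqref{ch6}--\eqref{ch8}), and evaluate the denominator via the Laplace expansion $(1+t^2){\rm a}_{n-1}(t)-2t^2{\rm a}_{n-2}(t)-2t^n$ reduced through \eqref{cf8} and \eqref{cf3}. Your caution about the extra $-2t^n$ term from the two affine edges is exactly the point the paper handles, and nothing further is needed.
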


\begin{remark}
The Poincar\'{e} series of symmetric invariants for ${C}_n$ of odd order are not included in Theorems \ref{finps},
\ref{uniform} and \ref{denom}. Equality \eqref{5.18} is a uniform formula of
the Poincar\'{e} series of symmetric ${C}_n$-invariants for arbitrary order.
\end{remark}

For the group ${D}_n$ $(n\geq 2)$, Eqs.  \eqref{ch7} and \eqref{ch8} derive the
determinant of the quantum affine Cartan matrix in \eqref{inv22} for $D_{n+2}^{(1)}$:
\begin{eqnarray*}
   & & \mathrm{det}((1+t^2)I-tA_1) = \prod_{g\in \Upsilon}(1+t^2-\chi_{\textmd{V}}(g)t) \nonumber\\
   &=& (1+t^2-2t)(1+t^2+2t)(1+t^2)^2\prod\limits_{i=1}^{n-1}\left(1+t^2-2\cos\left(\frac{\pi i}{n}\right)t\right)\label{ch9} \\
   &=& (1-t^4)^2\sum\limits_{i=0}^{\lfloor (n-1)/2\rfloor}(-1)^i{\binom{n-1-i}{i}}t^{2i}(1+t^2)^{n-1-2i}.  \label{ch10}
\end{eqnarray*}

The group ${D}_n$ realizes the simply laced affine Dynkin diagram of type $D_{n+2}^{(1)}$.
Set ${\rm d_n}(t)=\mathrm{det}((1+t^2)I-t \widetilde{A}_1)$, where $\widetilde{A}_1$ is the adjacency matrix of finite Dynkin diagram $D_{n+2}$
which is obtained by removing the affine vertex of affine Dynkin diagram  $D_{n+2}^{(1)}$.
Assume ${\rm d_0}(t)=1+2t^2+t^4$ and ${\rm d_1}(t)=1+t^2+t^4+t^6$.
By the Laplace expansion, one has the recursive formula
\begin{equation*}
  {\rm d_{n+1}}(t)=(1+t^2){\rm d_n}(t)-t^2{\rm d_{n-1}}(t), \ \ \ \ \ \ \ \ {\rm for} \ \ n\geq1.
\end{equation*}
In addition, we have
\begin{eqnarray*}
  {\rm d_n}(t)&=&2t^{n+1}(1+t^2){\rm T_{n+1}}\left(\frac{t+t^{-1}}{2}\right) \\
   &=& (1+t^2)\prod\limits_{i=1}^{n+1}\left(1+t^2-2{\cos}\left(\frac{(2i-1)\pi}{2(n+1)}\right)t\right) \\
   &=& 2^{-n}{\sum\limits_{i=0}^{\lfloor {(n+1)}/2\rfloor}}
  {\binom{n+1}{2i}}(1+t^2)^{n+2-2i}(1-t^{2})^i. \\
\end{eqnarray*}

Therefore we have that
\begin{theo}
Let ${D}_n\leq\mathrm{SL}_2(\mathbb{C})$. The Poincar\'e series $s^0(t)$ for ${D}_n$ invariants
 in $S(\mathbb{C}^2)=\bigoplus\limits_{k \geq 0}S^{k}(\mathbb{C}^2)$ is given by
{\rm \begin{eqnarray*}
    s^{0}(t) &=& \frac{(1+t^2)\prod\limits_{i=1}^{n+1}\left(1+t^2-2{\cos}\left(\frac{(2i-1)\pi}{2(n+1)}\right)t\right)}
  {(1-t^4)^2\prod\limits_{i=1}^{n-1}\left(1+t^2-{\rm 2\cos}\left(\frac{\pi i}{n}\right)t\right)}  \\
   &=& \frac{2^{-n}{\sum\limits_{i=0}^{\lfloor {(n+1)}/2\rfloor}}{\binom{n+1}{2i}}(1+t^2)^{n+2-2i}(1-t^{2})^{2i}}
   {(1-t^4)^2{\sum\limits_{i=0}^{\lfloor (n-1)/2\rfloor}}(-1)^i{\binom{n-1-i}{i}}t^{2i}(1+t^2)^{n-1-2i}}.
\end{eqnarray*}}
\end{theo}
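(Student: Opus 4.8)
The plan is to follow the exact template already used for $D_{n-1}\lhd D_{2(n-1)}$ and $C_{2n}\lhd D_n$: obtain the numerator as the determinant of the quantum finite Cartan matrix of the relevant simply laced Dynkin diagram ($D_{n+2}$) via the Tchebychev recursion, obtain the denominator as the determinant of the quantum affine Cartan matrix of $D_{n+2}^{(1)}$ directly from the character values of $V$ restricted to the conjugacy classes of $D_n$, and then invoke Theorem \ref{inv2} (and the relations \eqref{inv22}) to write $s^0(t)$ as the quotient.

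First I would embed $D_n$ into $\mathrm{SL}_2(\mathbb C)$ via \eqref{e:imbedD} and list the conjugacy class representatives $\Upsilon = \{\pm 1,\ x^i\ (i=1,\dots,n-1),\ y,\ xy\}$, reading off $\chi_V(\pm 1)=\pm 2$, $\chi_V(x^i)=2\cos(\pi i/n)$, $\chi_V(y)=\chi_V(xy)=0$. Plugging into the product formula from Theorem \ref{inv2} gives
\[
\mathrm{det}\left((1+t^2)I-tA_1\right)=(1+t^2-2t)(1+t^2+2t)(1+t^2)^2\prod_{i=1}^{n-1}\left(1+t^2-2\cos\left(\tfrac{\pi i}{n}\right)t\right),
\]
and using $(1+t^2-2t)(1+t^2+2t)=(1-t^2)^2$ together with the additive form \eqref{ch8} for the truncated product yields the closed form $(1-t^4)^2\sum_i(-1)^i\binom{n-1-i}{i}t^{2i}(1+t^2)^{n-1-2i}$ displayed just before the statement. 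This is the denominator.

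Next I would handle the numerator $\mathrm{det}\left((1+t^2)I-t\widetilde A_1\right)$, where $\widetilde A_1$ is the adjacency matrix of the finite Dynkin diagram $D_{n+2}$ obtained by deleting the affine node of $D_{n+2}^{(1)}$. Writing ${\rm d}_n(t)$ for this determinant and expanding along the forked end (Laplace expansion, exactly as for ${\rm c}_{n-1}(t)$ and ${\rm b}_{n-1}(t)$ in the earlier subsections) gives the three-term recursion ${\rm d}_{n+1}(t)=(1+t^2){\rm d}_n(t)-t^2{\rm d}_{n-1}(t)$ with initial data ${\rm d}_0(t)=(1+t^2)^2$, ${\rm d}_1(t)=1+t^2+t^4+t^6$. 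Comparing with \eqref{e:cheb}, one identifies ${\rm d}_n(t)=2t^{n+1}(1+t^2){\rm T}_{n+1}\!\big(\tfrac{t+t^{-1}}{2}\big)$; the factorization \eqref{cf5} then produces the product form $(1+t^2)\prod_{i=1}^{n+1}\big(1+t^2-2\cos\big(\tfrac{(2i-1)\pi}{2(n+1)}\big)t\big)$, and the additive form \eqref{cf3} produces $2^{-n}\sum_i\binom{n+1}{2i}(1+t^2)^{n+2-2i}(1-t^2)^i$. Dividing numerator by denominator and citing \eqref{inv22} gives both displayed expressions for $s^0(t)$.

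The only genuinely delicate point — the rest being the routine determinant bookkeeping already rehearsed twice in the paper — is pinning down the correct initial conditions ${\rm d}_0,{\rm d}_1$ for the $D$-type fork and verifying that the resulting sequence really is $2t^{n+1}(1+t^2){\rm T}_{n+1}$ rather than, say, a combination of ${\rm T}$ and ${\rm U}$; I would settle this by checking the recursion and the first two values against the Tchebychev recursion \eqref{e:cheb}, after which uniqueness of solutions to a second-order linear recursion closes the argument. One must also double-check that $\chi_V$ indeed vanishes on both reflection-type classes $y$ and $xy$ of $D_n$ (they do, since $\rho(y)$ and $\rho(xy)$ are off-diagonal, hence traceless), so that the extra factor $(1+t^2)^2$ — not $(1+t^2)$ — appears in the denominator, matching the two affine-type nodes of $D_{n+2}^{(1)}$.
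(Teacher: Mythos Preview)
Your plan is correct and follows the paper's own argument essentially line for line: the denominator is computed from the character values on $\Upsilon=\{\pm1,x^i,y,xy\}$ exactly as you describe, and the numerator ${\rm d}_n(t)$ is handled via the same Laplace recursion with the same initial data ${\rm d}_0=(1+t^2)^2$, ${\rm d}_1=1+t^2+t^4+t^6$, leading to the identification ${\rm d}_n(t)=2t^{n+1}(1+t^2){\rm T}_{n+1}\!\big(\tfrac{t+t^{-1}}{2}\big)$ and the two displayed forms. One small slip: in your additive form for the numerator you wrote $(1-t^2)^i$, but substituting $\tfrac{t+t^{-1}}{2}$ into \eqref{cf3} gives $(1-t^2)^{2i}$ (since $1-\big(\tfrac{t+t^{-1}}{2}\big)^{-2}=\big(\tfrac{1-t^2}{1+t^2}\big)^2$), matching the theorem statement.
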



For the three exceptional polyhedral groups $T$, $O$, $I$ in
$\mathrm{SL}_2(\mathbb{C})$,
we list the Poincar\'e series of symmetric invariants by formula \eqref{exponent} with respect to
the exponents and Coxeter numbers in ${\rm Table}$ $1$ respectively.
\begin{align*}
     s^{0}_T(t)&=\frac{\prod_{\tilde{m}=1,4,5,7,8,11}\left(1+t^2-2\cos(\frac{\pi\tilde{m}}{12})t\right)}
     {\prod_{m=0,2,2,3,4,4,6}\left(1+t^2-2\cos(\frac{\pi m}{6})t\right)}
   =\frac{1+t^2-t^6+t^{10}+t^{12}}{1+t^2-2t^6-2t^8+t^{12}+t^{14}} \\
   &=1+t^6+t^8+2t^{12}+t^{14}+t^{16}+2t^{18}+\cdots,
\end{align*}
\begin{align*}
    s^{0}_O(t)&=\frac{\prod_{\tilde{m}=1,5,7,9,11,13,17}\left(1+t^2-2\cos(\frac{\pi\tilde{m}}{18})t\right)}
     {\prod_{m=0,3,4,6,6,8,9,12}\left(1+t^2-2\cos(\frac{\pi m}{12})t\right)} \\
   &=\frac{1+t^2-t^6-t^8+t^{12}+t^{14}}{1+t^2-t^6-2t^8-t^{10}+t^{14}+t^{16}}=1+t^8+t^{12}+t^{16}+t^{18}+\cdots,
\end{align*}
\begin{align*}
    s^{0}_I(t)&=\frac{\prod_{\tilde{m}=1,7,11,13,17,19,23,29}\left(1+t^2-2\cos(\frac{\pi\tilde{m}}{30})t\right)}
     {\prod_{m=0,6,10,12,15,18,20,24,30}\left(1+t^2-2\cos(\frac{\pi m}{30})t\right)} \\
   &=\frac{1+t^2-t^6-t^8-t^{10}+t^{14}+t^{16}}{1+t^2-t^6-t^8-t^{10}-t^{12}+t^{16}+t^{18}}=1+t^{12}+t^{20}+\cdots. \\
\end{align*}

\vskip30pt \centerline{\bf ACKNOWLEDGMENT}

N. Jing would like to thank the partial support of
Simons Foundation grant 523868 and NSFC grant 11531004. 
H. Zhang would like to thank the support of NSFC grant 11871325.


\begin{thebibliography}{hlsbdssss}

\bibitem{Ben} Benkart G.: Poincar\'{e} series for tensor invariants and the McKay correspondence.
Adv. Math. \textbf{290}, 236-259 (2016)

\bibitem{BLM} Berman S., Lee S., Moody R.V.: The spectrum of a Coxeter transformation,
affine Coxeter transformations, and the defect map.
J. Algebra \textbf{121}, 
339-357 (1989)

\bibitem{Bou} Bourbaki N.: Groupes et Alg\`{e}bres de Lie, Ch. 4-6. Hermann, Paris, 1968

%
%

\bibitem{Car} Carter R.W.: Simple groups of Lie type. Wiley, New York, 1972

\bibitem{Col} Coleman A.J.:  Killing and the Coxeter transformation of Kac-Moody algebras.
Invent. Math. \textbf{95}, 447-477 (1989)

\bibitem{D} Damianou P.A.: A beautiful sine formula. Amer. Math. Monthly \textbf{121}, 120-135 (2014) 


\bibitem{DR} Dlab V., Ringel C.M.: Eigenvalues of Coxeter transformations and the
Gelfand-Kirillov dimension of the preprojective algebras. Proc. Amer. Mark. Soc. \textbf{83}, 228-232
(1981)


\bibitem{Ebe} Ebeling W.: Poincar\'{e} series and monodromy of a two-dimensional quasihomogeneous hypersurface
singularity. Manuscripta Math. \textbf{107}, 271-282 (2002) 


\bibitem{Ebe2} Ebeling W.: A McKay correspondence for the Poincar¨¦ series of some finite subgroups of ${\rm SL}_3(\mathbb C)$.
J. Singul. \textbf{18}, 397-408 (2018)

\bibitem{FuHa} Fulton W., Harris J.: Representation theory, A first Course.
Springer-Verlag, GTM, \textbf{129}, New York, 1991

\bibitem{G-SV} Gonzalez-Sprinberg G., Verdier J.-L.: Construction g\'{e}om\'{e}trique de la
correspondence de McKay. Ann. Sci. \'{E}cole Norm. Sup. (4) \textbf{16}, 409-449 (1983) 


\bibitem{HaHe} Hanany A., He Y.-H.: A monograph on the classification of the discrete subgroups of $\mathrm{SU}(4)$.
J. High Energy Phys. \textbf{2}, 12 (2001)

\bibitem{HJC} Hu X., Jing N., Cai W.: Generalized McKay quivers of rank three.
Acta Math. Sin. (Engl. Ser.) \textbf{29}, 1351-1368 (2013) 


%



\bibitem{JWZ} Jing N., Wang D., Zhang H.: Poincar\'{e} series, exponents of
affine Lie algebras, and McKay-Slodowy correspondence. J. Algebra \textbf{546}, 135-162 (2020) 


\bibitem{Kn} Kn\"{o}rrer H.: Group representations and the resolution of rational double
points. Finite Groups-Coming of Age (Montreal, Que., 1982), 175-222, Contemp. Math. \textbf{45},
Amer. Math. Soc., Providence, R.I. (1985)

\bibitem{Kor} Kor\'{a}nyi A.: Spectral properties of the Cartan matrices. Acta Sci.
Math. (Szeged) \textbf{57}, 587-592 (1993)


\bibitem{Kos}  Kostant B.: On finite subgroups of $SU(2)$, simple Lie algebras, and the McKay
correspondence. Proc. Nat. Acad. Sci. U.S.A. \textbf{81}, 5275-5277 (1984) 



\bibitem{Kos2} Kostant B.: The McKay correspondence, the Coxeter element and representation theory.
The mathematical heritage of \'{E}lie Cartan (Lyon 1984), Ast\'{e}risque, Num\'{e}ro Hors S\'{e}rie 209-255 (1985)



\bibitem{Kos3} Kostant B.: The Coxeter element and the branching law for the finite subgroups of
\textsl{SU}(2). The Coxeter Legacy, Amer. Math. Soc., Providence, R.I. 63-70 (2006)


\bibitem{Mc} McKay J.: Graphs, singularities, and finite groups. The Santa Cruz Conference on
Finite Groups, (Univ. California, Santa Cruz, Calif., 1979), pp. 183-186,
Proc. Sympos. Pure Math., \textbf{37}, Amer. Math. Soc., Providence, R.I. (1980)


\bibitem{Ri} Rivlin T.J.: Chebyshev polynomials.  From approximation theory to algebra and number theory.
2nd edition, Pure and Applied Mathematics (New York). John Wiley \& Sons, Inc., New York (1990)


\bibitem{Sl} Slodowy P.: Simple singularities and simple algebraic groups. Lecture Notes in Math.
\textbf{815}, Springer, Berlin, 1980

%
%
%


\bibitem{Ste} Steinberg R.: Finite subgroups of ${SU}_{2}$, Dynkin diagrams and affine Coxeter
elements. Pacific J. Math. \textbf{118}, 587-598 (1985) 


\bibitem{St} Stekolshchik R.: Notes on Coxeter transformations and the Mc\-Kay correspondence.
Springer Monographs in Mathematics. Springer-Verlag, Berlin, 2008

\bibitem{Su} Suter R.: Quantum affine Cartan matrices, Poincar\'e series of binary polyhedral groups,
and reflection representations. Manuscripta Math. \textbf{122}, 1-21 (2007) 


\bibitem{YaYu} Yau S. S.-T., Yu Y.: Gorenstein quotient singularities in dimension three.
Mem. Amer. Math. Soc. no. 505. \textbf{105}, (1993)

\end{thebibliography}
\end{document}